  \newtheorem{theorem}{Theorem}
  \newtheorem{corollary}{Corollary}
  \newtheorem{proposition}{Proposition}
  \newtheorem{lemma}{Lemma}
\newtheorem{question}{Question}
  \DeclareMathOperator{\re}{Re}
\DeclareMathOperator{\res}{Res}
\begin{document}

\title[Multiple dense divisibility]{An extension of smooth numbers: \\ multiple dense divisibility}

\author{Garo Sarajian and Andreas Weingartner} 
\address{Department of Mathematical Sciences, United States Military Academy, 601 Cullum Road, West Point, New York, 10996, USA}
\email{gsarajian@gmail.com}
\address{Department of Mathematics, Southern Utah University, 351 West University Boulevard, Cedar City, Utah 84720, USA} 
\email{weingartner@suu.edu} 

\maketitle

\begin{abstract}
The $i$-tuply $y$-densely divisible numbers were introduced by a Polymath project, as a weaker condition on the moduli than $y$-smoothness, 
in distribution estimates for primes in arithmetic progressions. 
We obtain the order of magnitude of the count of these integers up to $x$, uniformly in $x$ and $y$, for every fixed natural number $i$.
\end{abstract}

\section{Introduction}

Let $\mathcal{S}_y$ denote the set of natural numbers whose prime factors are all less than or equal to $y$. 
Such integers are called $y$-smooth (or $y$-friable) and they appear in many different areas of number theory \cite{Gran, HilTen, Pom}. 
Zhang \cite{Zhang} established the existence of infinitely many pairs of primes with bounded gaps 
via distribution estimates for primes in arithmetic progressions to smooth moduli.
In the Polymath paper \cite{Poly}, the condition of $y$-smoothness for the moduli 
is relaxed to the condition of being $i$-tuply $y$-densely divisible, allowing for a larger set of moduli in their estimates. 
The intent is to have a set of integers, as large as possible, where we have control over the location of enough ``nice" divisors, 
i.e. divisors who themselves have enough (nice) divisors whose location we can control, etc. 
In Theorem \ref{thmDkasymp}, we determine the order of magnitude of the count of $i$-tuply $y$-densely divisible numbers up to $x$, uniformly in $x$ and $y$, for every fixed $i \in \mathbb{N}$.

\subsection{The $i$-tuply $y$-densely divisible numbers}

We say that a finite, increasing sequence of natural numbers is \emph{$y$-dense} if the ratios of consecutive terms of the sequence are all $\le y$. 

Define $\mathcal{D}_{0,y}=\mathbb{N}$. For each integer $i\ge 1$, let $n\in \mathcal{D}_{i,y}$ if and only if the increasing sequence $\{d: d|n, d\in \mathcal{D}_{i-1,y}\}$ forms a $y$-dense sequence containing $1$ and $n$. We say that the members of  $\mathcal{D}_{i,y}$ are $i$-tuply $y$-densely divisible, or $(i,y)$-densely divisible, which we sometimes abbreviate as $(i,y)$-d.d.

For example, when $y=2$, the members up to $32$ are given by
$$
\mathcal{D}_{1,2}=\{ 1, 2, 4, 6, 8, 12, 16, 18, 20, 24, 28, 30, 32, ... \},
$$
$$
\mathcal{D}_{2,2}=\{ 1, 2, 4, 8, 12, 16, 24, 32,...\},
$$
$$
\mathcal{D}_{3,2}=\{ 1, 2, 4, 8, 16, 24, 32,...\},
$$
and, for $i\ge 4$,
$$
\mathcal{D}_{i,2}=\{ 1, 2, 4, 8, 16, 32,...\}.
$$

In general, the definition of  $\mathcal{D}_{i,y}$ implies 
$$\mathcal{D}_{i+1,y}\subseteq \mathcal{D}_{i,y}\qquad (i\ge 0).$$ 
By Corollary \ref{corS},
$$
\mathcal{S}_y = \bigcap_{i\ge 0} \mathcal{D}_{i,y}.
$$
Thus, the integers that are $i$-tuply $y$-densely divisible for all $i\ge 0$ are exactly the $y$-smooth numbers.

The following somewhat stronger definition is used in Polymath \cite{Poly}:

Define $\mathcal{D}^*_{0,y}=\mathbb{N}$. For $i\ge 1$, let $n\in \mathcal{D}^*_{i,y}$ if and only if for each $j$, $0\le j \le i-1$, 
the increasing sequence $\{d: d|n, d\in \mathcal{D}^*_{j,y} \text{ and } n/d \in \mathcal{D}^*_{i-1-j,y} \}$ forms a $y$-dense sequence containing $1$ and $n$. 
We call the members of  $\mathcal{D}^*_{i,y}$ strongly $i$-tuply $y$-densely divisible, or strongly $(i,y)$-densely divisible.

As above, the definition of  $\mathcal{D}^*_{i,y}$ implies
$$\mathcal{D}^*_{i+1,y}\subseteq \mathcal{D}^*_{i,y} \qquad (i\ge 0),$$
and, by Corollary \ref{corS},
$$
\mathcal{S}_y = \bigcap_{i\ge 0} \mathcal{D}^*_{i,y}.
$$

Considering $j=i-1$ in the definition of $\mathcal{D}^*_{i,y}$, we see that $\mathcal{D}^*_{i,y}\subseteq \mathcal{D}_{i,y}$ for all $i\ge 0$. 
Note that $\mathcal{D}_{i,y}=\mathcal{D}^*_{i,y}$ for $0\le i \le 2$. 
The members up to $32$ displayed above are unchanged if $\mathcal{D}_{i,2}$ is replaced by $\mathcal{D}^*_{i,2}$. 
However, $8424 \in \mathcal{D}_{3,2}\setminus \mathcal{D}^*_{3,2}$ and $65520 \in \mathcal{D}_{4,2}\setminus \mathcal{D}^*_{4,2}$.

We define the corresponding counting functions as
$$ 
D_i(x,y): = | [1,x] \cap \mathcal{D}_{i,y} |, \quad D^*_i(x,y): = | [1,x] \cap \mathcal{D}^*_{i,y} |.
$$

\begin{theorem}\label{thmDkasymp}
Let $i \in \mathbb{N}$ be fixed. Uniformly for $x\ge y \ge 2$,
$$
 D_i(x,y) \asymp_i D_i^*(x,y)   \asymp_i x \left(\frac{\log y }{\log x}\right)^{\lambda_{1/i}},
$$
where the exponent $\lambda_{1/i}$ is defined in Theorem \ref{rhoathm} and its numerical values are listed in Table \ref{tablelambda}.
These estimates remain valid if only squarefree integers in $\mathcal{D}_{i,y}$ (resp. $\mathcal{D}^*_{i,y}$)  are counted, provided $x\ge y \ge y_0(i)$. 
As $i\to \infty$, we have 
$$\lambda_{1/i}= i \bigl(\log (i) -1 +o(1)\bigr).$$
\end{theorem}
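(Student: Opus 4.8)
Write $u=\log x/\log y$, so the target size is $x\,u^{-\lambda_{1/i}}$. Since $\mathcal D^*_{i,y}\subseteq\mathcal D_{i,y}$ gives $D^*_i(x,y)\le D_i(x,y)$ for free, it suffices to prove the two one-sided bounds $D_i(x,y)\ll_i x\,u^{-\lambda_{1/i}}$ and $D^*_i(x,y)\gg_i x\,u^{-\lambda_{1/i}}$ uniformly for $x\ge y\ge 2$; and for the final assertion I will arrange that the lower bound is produced by a family of \emph{squarefree} elements of $\mathcal D^*_{i,y}$ once $y\ge y_0(i)$, the matching squarefree upper bound being inherited at once. The argument runs by induction on $i$. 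The base case $i=1$ is the estimate $D_1(x,y)\asymp x\log y/\log x$ for the count of integers whose full divisor set is $y$-dense; this is the instance $a=1$ of Theorem~\ref{rhoathm}, so $\lambda_1=1$.

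\textbf{Inductive step.} Fix $i\ge 2$ and assume the statement for $i-1$. The heart of the matter is a \emph{local} description of dense divisibility at level $i$. For squarefree $n=p_1\cdots p_k$ with $p_1<\cdots<p_k$, lying in $\mathcal D_{i,y}$ means that for every $j$ the poset of level-$(i-1)$ divisors of $n$ meets the window $(p_1\cdots p_j,\;y\,p_1\cdots p_j]$. The cheapest divisor landing there is obtained by replacing one prime $p_\ell$ of $p_1\cdots p_j$ by $p_{j+1}$; requiring simultaneously that the new divisor have ratio $\le y$ to $p_1\cdots p_j$ and that it still be level $i-1$ (a constraint of the same shape with exponent $a_{i-1}$, by the inductive description) forces the recursion $a_i=a_{i-1}/(1+a_{i-1})$, hence $a_i=1/i$ from $a_1=1$. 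Thus $\mathcal D_{i,y}$ is sandwiched, from below, between a set in the family to which Theorem~\ref{rhoathm} applies with parameter $a=1/i$ and, from above, another such set with $a=1/i$ and $y$ enlarged to $C_iy$. For the upper bound, since $\log(C_iy)\asymp_i\log y$ for $y\ge 2$, Theorem~\ref{rhoathm} gives $D_i(x,y)\ll_i x\,u^{-\lambda_{1/i}}$. For the lower bound I take the squarefree $n=q_1\cdots q_k$ with $q_1<\cdots<q_k$, $q_1\le y$ and $q_{j+1}\le y\,(q_1\cdots q_j)^{1/i}$ for $1\le j<k$, verify by induction on $i$ that each of the $j$-layer conditions in the definition of $\mathcal D^*_{i,y}$ holds — so that these $n$ lie in $\mathcal D^*_{i,y}$ — and invoke Theorem~\ref{rhoathm} (with $a=1/i$): their number up to $x$ is $\gg_i x\,u^{-\lambda_{1/i}}$ as soon as $y\ge y_0(i)$, the range in which the prescribed prime windows are non-degenerate. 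For $2\le y<y_0(i)$ this family is too thin, and I instead run the same construction with suitable prime powers (e.g.\ powers of $2$) inserted to fill the windows; this loses squarefreeness but keeps the lower bound uniform down to $y=2$. Combining with $D^*_i\le D_i$ yields $D_i(x,y)\asymp_i D^*_i(x,y)\asymp_i x\,u^{-\lambda_{1/i}}$ for all $x\ge y\ge 2$; for $y\ge y_0(i)$ the same holds with only squarefree integers counted, the lower bound being realised by such integers and the upper bound restricting to them.

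\textbf{Asymptotics of $\lambda_{1/i}$.} The final relation is the $i\to\infty$ instance of the behaviour of $\lambda_a$ as $a\to 0^+$, namely $\lambda_a=a^{-1}\bigl(\log(1/a)-1+o(1)\bigr)$. I would obtain this by analysing the relation that defines $\lambda_a$ in Theorem~\ref{rhoathm} (equivalently, the functional/Dirichlet-series identity determining the exponent of $\log y/\log x$): as $a\to 0$ a single dominant balance survives, and solving it asymptotically yields the two-term expansion; setting $a=1/i$ finishes the proof.

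\textbf{Expected main obstacle.} The structural step is the hard part, in two directions: (i) proving that the ``spread-out'' squarefree family really lies in the \emph{strong} set $\mathcal D^*_{i,y}$, i.e.\ that all $i$ nested layers of dense divisibility hold and not merely $\mathcal D_{i,y}$; and (ii) proving that every member of $\mathcal D_{i,y}$ has ordered prime factorisation obeying the $a=1/i$ growth condition with a constant depending only on $i$. Both are delicate because a gap in the level-$i$ divisor sequence is governed by the whole sub-poset of level-$(i-1)$ divisors below it, so the induction must transport precise quantitative information about that sub-poset, not just a membership assertion; the uniformity down to $y=2$ and the exact threshold $y_0(i)$ in the squarefree refinement are what force the separate prime-power argument. (Letting $i\to\infty$ recovers $\bigcap_i\mathcal D_{i,y}=\mathcal S_y$ of Corollary~\ref{corS}, consistently with $\lambda_{1/i}\to\infty$.)
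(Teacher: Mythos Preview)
Your high-level strategy coincides with the paper's: sandwich $\mathcal D^*_{i,y}\subseteq\mathcal D_{i,y}$ between two sets of the form $\mathcal B_\theta$ with $\theta(n)$ of size roughly $yn^{1/i}$, count those via Theorems~\ref{rhoathm} and~\ref{thmBaxyIntro} (you cite only the former, but it is the latter that actually delivers the counting), and invoke Theorem~\ref{thmlambda} for the behaviour of $\lambda_{1/i}$. Indeed the paper says exactly that Theorem~\ref{thmDkasymp} follows from Theorems~\ref{thm5sets}, \ref{rhoathm}, \ref{thmBaxyIntro} and~\ref{thmlambda}.

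The genuine gap is in your sketch of the sandwich (the content of Theorem~\ref{thm5sets}). For the upper inclusion $\mathcal D_{i,y}\subseteq\mathcal B_{\theta_u}$, your heuristic of ``replacing one prime $p_\ell$ of $p_1\cdots p_j$ by $p_{j+1}$'' and reading off a recursion $a_i=a_{i-1}/(1+a_{i-1})$ does not become a proof: the smallest $(i-1)$-d.d.\ divisor exceeding $p_1\cdots p_j$ need not arise from a single prime swap, and there is no evident control on which $p_\ell$ would be chosen. The paper's argument is structurally different: for $n\in\mathcal D_{i,y}$ and a prime $p\mid n$, write $n=mpr$ with $P^+(m)\le p\le P^-(r)$ and take the \emph{smallest} divisor $ds\in\mathcal D_{i-1,y}$ with $d\mid m$, $s\mid pr$, $s>1$; the inductive hypothesis applied to $ds$ gives $p\le P^-(s)\le yd^{1/(i-1)}$, hence $dp\ge p^i/y^{i-1}$, while $y$-denseness forces $ds\le ym$; combining yields $p\le ym^{1/i}$.

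For the lower inclusion, ``verify by induction on $i$'' does not suffice. Membership in $\mathcal D^*_{i,y}$ demands, for every $1\le R\le yn$ and every split $v+w=i-1$, a factorisation $n=d_vd_w$ with $R/y\le d_w\le R$ and $d_v,d_w$ at the correct lower levels. The paper obtains this as Lemma~\ref{lemprop}, whose proof is an induction on $\Omega(n)$: strip the largest prime $p$, apply the hypothesis to $m=n/p$, and reattach $p$ to whichever factor can absorb it while staying in the right $\mathcal L_{\cdot,y}$; the case split on whether $R\le my/p^v$ or not is where the exponent $1/i$ enters. This is precisely the obstacle you flagged, and it is not resolved by inducting on $i$ alone. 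Note also that the paper's lower-bound family uses $\theta_l(n)=\max(y,(yn)^{1/i})$, not your $\theta_u(n)=yn^{1/i}$; since $\mathcal D_{i,y}\subseteq\mathcal B_{\theta_u}$ already, your stronger claim $\tilde{\mathcal B}_{\theta_u}\subseteq\mathcal D^*_{i,y}$ would force all squarefree elements of $\mathcal D_{i,y}$ to lie in $\mathcal D^*_{i,y}$, which is not asserted anywhere and is not needed for the counting.
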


\begin{table}[h]
  \begin{tabular}{ | l | l | }
   \hline
    $i$ & $\lambda_{1/i}$ \\ \hline
    1 & 1  \\ \hline
2 & 2.46206...  \\ \hline
3 & 4.20605... \\ \hline
4 & 6.15900... \\ \hline
5 & 8.27925...  \\ \hline
  \end{tabular}
\quad 
 \begin{tabular}{ | l | l | }
   \hline
    $i$ & $\lambda_{1/i}$ \\ \hline
6 & 10.5395... \\ \hline
7 & 12.9203... \\ \hline
8 & 15.4074...  \\ \hline
9 & 17.9892...   \\ \hline
10 & 20.6568...  \\ \hline
    \end{tabular}
\quad 
 \begin{tabular}{ | l | l | }
   \hline
    $i$ & $\lambda_{1/i}$ \\ \hline
11 & 23.4026... \\ \hline
12 & 26.2206... \\ \hline
13 & 29.1054...  \\ \hline
14 & 32.0524...   \\ \hline
15 & 35.0578...  \\ \hline
    \end{tabular}
\quad 
 \begin{tabular}{ | l | l | }
   \hline
    $i$ & $\lambda_{1/i}$ \\ \hline
16 & 38.1180... \\ \hline
17 & 41.2300... \\ \hline
18 & 44.3909...  \\ \hline
19 & 47.5985...   \\ \hline
20 & 50.8504...  \\ \hline
    \end{tabular}
  \caption{Truncated values of the exponent in Theorem \ref{thmDkasymp}, obtained from the estimate \eqref{gasapprox}.}
  \label{tablelambda}
  \end{table}

Theorem \ref{thmDkasymp} follows from Theorems \ref{thm5sets}, \ref{rhoathm}, \ref{thmBaxyIntro} and \ref{thmlambda}.

When $i=1$, we have $D_1(x,y)=D^*_1(x,y) \asymp x \frac{\log y}{\log x}$, a result first established by Saias \cite[Thm. 1]{Saias1},
which has since been improved to an asymptotic estimate \cite{PDD} (see also Corollary \ref{cor1Baxy} with $a=1$).
With Table \ref{tablelambda}, we obtain the following new estimates, valid uniformly for $x\ge y\ge 2$:
$$
D_2(x,y)=D^*_2(x,y)\asymp x  \left(\frac{\log y }{\log x}\right)^{2.46206...},
$$
$$
D_3(x,y)\asymp D^*_3(x,y)\asymp x  \left(\frac{\log y }{\log x}\right)^{4.20605...},
$$
$$
D_4(x,y)\asymp D^*_4(x,y)\asymp x  \left(\frac{\log y }{\log x}\right)^{6.15900...},
$$
et cetera. If we count squarefree integers in $\mathcal{D}_{i,y}$ (resp. $\mathcal{D}^*_{i,y}$), 
these same estimates hold uniformly for $x\ge y \ge y_0(i)$. When $i=1$, Saias \cite[Thm. 1]{Saias1} showed that we can take $y_0(1)=2$. 
For general $i\ge 1$, \cite[Lemma 2.2, Part 4]{Sar} implies that $y_0(i)$ cannot be less than the $i$-th prime number.

\subsection{Describing $ \mathcal{D}_{i,y}$ (resp. $ \mathcal{D}^*_{i,y}$) by conditions on the prime factors}\label{subsecBtheta}

Let $\theta$ be a real arithmetic function and let $\mathcal{B}_\theta$ be the set of positive integers containing $n=1$ and all $n=p_1\cdots p_k$, 
whose prime factors $p_1\le \cdots \le p_k$ satisfy $p_{j+1} \le \theta ( p_1\cdots p_{j})$  for $0\le j < k$. 
Tenenbaum \cite{Ten86} showed that  $\mathcal{D}_{1,y} =\mathcal{B}_\theta$ with $\theta(n)=yn$.

\begin{lemma}[Tenenbaum \cite{Ten86}]\label{lemTen}
Let $y\ge 2$ and $n=p_1 \cdots p_k$, where $p_1\le \cdots \le p_k$ are prime. Then $n \in \mathcal{D}_{1,y}$ if and only if 
\begin{equation*}\label{pcond}
p_{j+1} \le y  p_1 \cdots p_j \qquad (0\le j<k).
\end{equation*}
That is, $ \mathcal{D}_{1,y} =\mathcal{D}_{1,y}^* = \mathcal{B}_\theta$ with $\theta(n)=yn$. 
\end{lemma}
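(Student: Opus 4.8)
The plan is to prove the two implications separately, in both cases arguing by induction on $k$ (the number of prime factors of $n$, with multiplicity) and exploiting the recursive definition $\mathcal{D}_{1,y} = \{n : \{d : d\mid n,\ d\in\mathcal{D}_{0,y}\} = \{d:d\mid n\} \text{ is } y\text{-dense}\}$, so that membership in $\mathcal{D}_{1,y}$ simply says: listing \emph{all} divisors of $n$ in increasing order $1=d_0<d_1<\cdots<d_\tau=n$, we have $d_{r+1}/d_r\le y$ for all $r$. First I would record the base case $k\le 1$ (primes $p\le y=y\cdot 1$ and $n=1$ are trivially in $\mathcal{D}_{1,y}$). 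For the forward direction, suppose $n=p_1\cdots p_k\in\mathcal{D}_{1,y}$ with $p_1\le\cdots\le p_k$. Fix $j$ with $0\le j<k$ and set $m=p_1\cdots p_j$. Every divisor $d$ of $m$ is a divisor of $n$, and among divisors of $n$ the next one after $m$ is at most $y m$ by $y$-density; it then suffices to show some multiple of a prime $\ge p_{j+1}$ \emph{not exceeding} $y m p_{j+1}/p_{j+1}$... more carefully: I would argue that $m$ itself is a divisor, and that the smallest divisor of $n$ that is a proper multiple of $m$ is $m\cdot p'$ where $p'$ is the smallest prime factor of $n/m$, i.e. $p'=p_{j+1}$. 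Wait—one must be careful that there could be divisors of $n$ strictly between $m$ and $mp_{j+1}$ not of the form (divisor of $m$)$\times$(prime); this is where the key combinatorial point enters.

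The heart of the matter is the following observation, which I would isolate as the main step: if $1=d_0<d_1<\cdots<d_\tau=n$ are all divisors of $n=p_1\cdots p_k$ in order, then the condition ``$d_{r+1}/d_r\le y$ for all $r$'' is equivalent to ``$d_{r+1}\le y\, d_r$ holds whenever $d_{r+1}$ is obtained from $d_r$ by multiplying by a single prime factor.'' The nontrivial inclusion here is that consecutive divisors $d_r, d_{r+1}$ need \emph{not} differ by one prime factor, but one can always interpolate: given any divisor $d$ of $n$ with $d<n$, the smallest divisor of $n$ exceeding $d$ is of the form $d'/p$ for some prime $p\mid d'$... Actually the clean statement is: for any divisor $d\mid n$ with $d\ne n$, there is a prime $q$ with $dq\mid n$ and $dq \le \max(d\cdot(\text{smallest prime factor of }n/d),\ \ldots)$; and crucially every divisor $e$ with $d<e\le n$ satisfies $e\ge d\cdot q_0$ where $q_0$ is the least prime dividing $n/d$, but also there is a divisor equal to exactly $d\cdot q_0$. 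So the ``gap after $d$'' in the divisor sequence is exactly $d\cdot q_0 / d = q_0$, the smallest prime factor of $n/d$. Therefore $y$-density of the full divisor sequence is equivalent to: for every divisor $d$ of $n$ with $d\ne n$, the smallest prime factor of $n/d$ is $\le y\cdot d$. Taking $d=p_1\cdots p_j$ gives $p_{j+1}\le y p_1\cdots p_j$; conversely, given the hypothesis $p_{j+1}\le y p_1\cdots p_j$ for all $j$, one shows that for an \emph{arbitrary} divisor $d=p_{a_1}\cdots p_{a_r}$ (a sub-multiset), the smallest missing prime is $p_{j+1}$ for the least index $j+1\notin\{a_1,\dots,a_r\}$, and $p_{j+1}\le y p_1\cdots p_j \le y\cdot(p_{a_1}\cdots p_{a_r}) = yd$ since $\{p_1,\dots,p_j\}$ (with multiplicity) is contained in the multiset of all but using that the $r$ chosen primes include... here I need $p_1\cdots p_j \le d$, which holds because $d$ is a product of $r\ge j$ primes each $\ge$ the corresponding small prime—this monotonicity ($p_1\le p_2\le\cdots$) is exactly why sorting the primes is the right normalization.

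So the steps in order are: (1) unwind the definition to the concrete divisor-sequence condition; (2) prove the interpolation lemma that reduces $y$-density to the ``one prime at a time'' gaps, identifying the gap after a divisor $d$ with the least prime factor of $n/d$; (3) the forward implication is then immediate by taking $d=p_1\cdots p_j$; (4) for the converse, take an arbitrary divisor $d$, let $j+1$ be the least index of a prime of $n$ not ``used up'' in $d$, and check $p_1\cdots p_j\le d$ using the sorted order, hence $p_{j+1}\le y p_1\cdots p_j\le y d$; (5) conclude $\mathcal{D}_{1,y}=\mathcal{B}_\theta$ with $\theta(n)=yn$ by comparing with the definition of $\mathcal{B}_\theta$, and note $\mathcal{D}_{1,y}^*=\mathcal{D}_{1,y}$ since the two definitions coincide for $i\le 2$ (as already remarked in the text), or directly since for $i=1$ the condition $n/d\in\mathcal{D}^*_{0,y}=\mathbb{N}$ is vacuous. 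The main obstacle is step (2): one must handle divisors $d$ for which there is no divisor of $n$ strictly between $d$ and $d\cdot q_0$ other than $d\cdot q_0$ itself, i.e. genuinely verify that multiplying $d$ by the least prime factor of $n/d$ lands on the \emph{next} divisor, not merely on \emph{a} larger divisor—equivalently, that no divisor $e$ of $n$ satisfies $d<e<d q_0$; this follows because such an $e$ would have $e/\gcd(e,d)$ a divisor of $n/\gcd(e,d)$ bigger than $1$ but smaller than $q_0$, contradicting minimality of $q_0$ when $\gcd(e,d)=d$, and a short additional argument (or an appeal to the fact that if $e\nmid$... ) handles $\gcd(e,d)\subsetneq d$. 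I would present this as a clean self-contained combinatorial lemma about divisor lattices before invoking it.
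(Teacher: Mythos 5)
Your step (2), the ``interpolation lemma'', is false as stated, and it is precisely the step carrying the forward implication. You claim that for a divisor $d$ of $n$ with $d\neq n$ the next divisor of $n$ after $d$ is exactly $d\,q_0$, where $q_0=P^-(n/d)$, i.e.\ that no divisor $e$ of $n$ satisfies $d<e<dq_0$. Take $n=6$, $d=2$: then $q_0=3$ and $dq_0=6$, but $e=3$ is a divisor with $2<3<6$. The successor of $d$ in the divisor sequence need not be a multiple of $d$ at all, so the gcd-based patch you sketch at the end is attempting to prove a false statement; no ``short additional argument'' for the case $\gcd(e,d)\neq d$ can exist. Consequently your derivation of ``$y$-density $\Rightarrow$ $P^-(n/d)\le yd$ for every divisor $d<n$'', and hence of $p_{j+1}\le y\,p_1\cdots p_j$, is unsupported. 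Only this direction is damaged: for the converse you only need that $d\,P^-(n/d)$ is \emph{some} divisor lying in $(d,yd]$, so the successor of $d$ is at most $yd$; that, together with your step (4) (let $j+1$ be the least missing index, note $p_1\cdots p_j\mid d$, hence $P^-(n/d)=p_{j+1}\le y\,p_1\cdots p_j\le yd$), is correct, as are the routine endpoints and the observation that $\mathcal{D}^*_{1,y}=\mathcal{D}_{1,y}$ because $\mathcal{D}^*_{0,y}=\mathcal{D}_{0,y}=\mathbb{N}$.

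The forward direction can be repaired without your false lemma by looking at the predecessor of $p_{j+1}$ rather than the successor of $p_1\cdots p_j$: fix $0\le j<k$ and let $e$ be the largest divisor of $n$ with $e<p_{j+1}$ (it exists since $1<p_{j+1}$). Every prime factor of such an $e$ is $<p_{j+1}$, and all copies of primes of $n$ below $p_{j+1}$ occur among $p_1,\dots,p_j$, so $e\mid p_1\cdots p_j$ and in particular $e\le p_1\cdots p_j$. Since $p_{j+1}$ is itself a divisor of $n$ exceeding $e$, maximality of $e$ forces the successor of $e$ in the divisor sequence of $n$ to be exactly $p_{j+1}$, and $y$-density gives $p_{j+1}\le ye\le y\,p_1\cdots p_j$, as required. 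For what it is worth, the paper offers no proof of this lemma at all (it cites Tenenbaum), so there is nothing to compare against; with the correction above, your two-sided plan does yield a legitimate self-contained proof.
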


Proposition \ref{thmi2} reveals $\theta(n)$ when $i=2$ and shows that  for all $i\ge 1$, we have $ \mathcal{D}_{i,y} = \mathcal{B}_{\theta_{i,y}}$ for some function $\theta_{i,y}$.
The case $i=2$ suggests that there is no simple explicit formula for  $\theta_{i,y}$ when $i\ge 2$.
We state the following result without proof, since we will not be using it here.
\begin{proposition}\label{thmi2}
We have
 $ \mathcal{D}_{2,y} =\mathcal{D}^*_{2,y} = \mathcal{B}_\theta$ with $\theta(n) = y \max\limits_{d \in \mathcal{D}_{1,y} \atop d| n} \min(n/d,d)$.
For $i\ge 1$,  $ \mathcal{D}_{i,y} = \mathcal{B}_{\theta_{i,y}}$ with $\theta_{i,y}(n) =\max(\{1\}\cup \{p: p \text{ prime, } np \in\mathcal{D}_{i,y}\})$.
\end{proposition}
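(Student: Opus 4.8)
The plan is to establish the second assertion first, deduce the first from it, and then read off the formula for $\theta_{2,y}$ when $i=2$. Throughout, write $P^+(n)$ for the largest prime factor of $n$, with $P^+(1)=1$.

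\emph{Step 1: an abstract characterization of the sets $\mathcal{B}_\theta$.} Call a set $\mathcal{A}$ of positive integers with $1\in\mathcal{A}$ \emph{top-closed} if (C1) $n\in\mathcal{A}$ implies $n/P^+(n)\in\mathcal{A}$, and (C2) whenever $mq\in\mathcal{A}$ with $q$ a prime $\ge P^+(m)$ and $p$ is a prime with $P^+(m)\le p\le q$, then $mp\in\mathcal{A}$. Every $\mathcal{B}_\theta$ is top-closed, since the inequalities defining membership of $n/P^+(n)$ (resp.\ $mp$) form a sublist of those for $n$ (resp.\ $mq$, the only extra one being $p\le q\le\theta(m)$). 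Conversely, if $\mathcal{A}$ is top-closed then $\mathcal{A}=\mathcal{B}_{\theta_\mathcal{A}}$ with $\theta_\mathcal{A}(n):=\max\bigl(\{1\}\cup\{p\ \text{prime}: np\in\mathcal{A}\}\bigr)$: the inclusion $\mathcal{A}\subseteq\mathcal{B}_{\theta_\mathcal{A}}$ follows by peeling off largest prime factors one at a time via (C1), and the reverse inclusion is an induction on the number of prime factors of $n$ (with multiplicity) in which (C2), applied with $p=P^+(n)$, promotes an admissible $mq\in\mathcal{A}$ to $mp=n\in\mathcal{A}$. This reduces the second assertion to showing that $\mathcal{D}_{i,y}$ is top-closed for every $i$, and then the function $\theta_{i,y}$ of the statement is precisely $\theta_{\mathcal{D}_{i,y}}$.

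\emph{Step 2: induction on $i$.} The case $i\le1$ is Lemma \ref{lemTen}. Assume $\mathcal{D}_{i-1,y}$ is top-closed, equivalently $\mathcal{D}_{i-1,y}=\mathcal{B}_{\theta_{i-1,y}}$, and set $\mathcal{E}(N)=\{d\mid N: d\in\mathcal{D}_{i-1,y}\}$, so that $N\in\mathcal{D}_{i,y}$ iff $\mathcal{E}(N)$ is a $y$-dense sequence containing $1$ and $N$. For (C1): given $n\in\mathcal{D}_{i,y}$, $q=P^+(n)$, $b=v_q(n)$, $m=n/q$, the endpoints are fine ($m\in\mathcal{D}_{i-1,y}$ by the inductive (C1)), and $\mathcal{E}(m)=\{d\in\mathcal{E}(n): v_q(d)\le b-1\}$; one must show that deleting from the $y$-dense sequence $\mathcal{E}(n)$ the divisors with $v_q=b$ leaves a $y$-dense sequence. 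For (C2) the analogous task is that replacing the top prime $q$ of $n=mq$ by a smaller prime $p\ge P^+(m)$ turns $\mathcal{E}(mq)$ into another $y$-dense sequence $\mathcal{E}(mp)$. The enabling input in both cases is that $\mathcal{D}_{i-1,y}$ is itself top-closed: if $d\in\mathcal{E}(n)$ has $v_q(d)=b$ then $P^+(d)=q$, so the inductive (C1) gives $d/q,d/q^2,\dots\in\mathcal{D}_{i-1,y}$, each still a divisor of $m$, and the inductive (C2) lets one slide the top prime among these divisors; combining these, one argues that any gap exceeding $y$ among the surviving divisors would already have forced a $y$-violating gap (or a missing $\mathcal{D}_{i-1,y}$-divisor) in $\mathcal{E}(n)$.

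\emph{Step 3: the case $i=2$.} Once $\mathcal{D}_{2,y}=\mathcal{B}_{\theta_{2,y}}$ is known, evaluate $\theta_{2,y}(n)=\max\{p: np\in\mathcal{D}_{2,y}\}$ directly. For a prime $p\ge P^+(n)$ not dividing $n$ we have $p=P^+(np)$, and by Lemma \ref{lemTen} the members of $\mathcal{E}(np)$ not already in $\mathcal{E}(n)$ are exactly the $dp$ with $d$ a $\mathcal{D}_{1,y}$-divisor of $n$ satisfying $p\le yd$; the $y$-density of $\mathcal{E}(np)$ can only fail near the top, and tracking the binding ratio between consecutive divisors just below $np$ (which is governed by the $\mathcal{D}_{1,y}$-divisor of $n$ nearest $\sqrt n$) yields $\theta_{2,y}(n)=y\max_{d\mid n,\,d\in\mathcal{D}_{1,y}}\min(n/d,d)$, the cases $p\mid n$ contributing nothing larger. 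Finally $\mathcal{D}_{2,y}=\mathcal{D}^*_{2,y}$ is immediate: for $i=2$ the defining condition of $\mathcal{D}^*_{2,y}$ with $j=1$ is exactly that of $\mathcal{D}_{2,y}$, while the $j=0$ condition is obtained from it by the divisor-reversal $d\mapsto n/d$, which preserves $y$-density.

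\emph{Main obstacle.} The crux — and presumably the reason the proposition is stated without proof — is the density-preservation argument of Step 2. The naive estimate is too weak: removing the divisors of $n$ divisible by $q^{v_q(n)}$ from the $y$-dense sequence $\mathcal{E}(n)$ yields, a priori, only a $\bigl(y\,P^+(n)\bigr)$-dense sequence. Recovering genuine $y$-density seems to require using both closure properties of $\mathcal{D}_{i-1,y}$ simultaneously together with a careful case analysis showing that an over-wide gap in the restricted sequence must propagate back into $\mathcal{E}(n)$; this is the step I expect to absorb most of the work.
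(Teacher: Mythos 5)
The paper states Proposition \ref{thmi2} without proof (``since we will not be using it here''), so there is no argument of the authors to measure yours against; judged on its own terms, your proposal is a strategy with its central step missing rather than a proof. Step 1 is correct and worthwhile: a set $\mathcal{A}\ni 1$ equals some $\mathcal{B}_\theta$ exactly when it is closed under deleting the largest prime factor and under replacing the largest prime factor by any smaller prime that is still $\ge P^+(m)$, and in that case $\theta_{\mathcal{A}}(n)=\max(\{1\}\cup\{p:np\in\mathcal{A}\})$ works; this correctly reduces the second assertion to showing $\mathcal{D}_{i,y}$ has these two closure properties. But that reduction is the easy half. The genuine gap is Step 2: you never prove that removing from the $y$-dense chain $\mathcal{E}(n)$ the $\mathcal{D}_{i-1,y}$-divisors divisible by $q^{v_q(n)}$ (resp.\ exchanging the top prime $q$ for $p$) leaves a $y$-dense chain. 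As you yourself observe, the obvious argument --- every deleted divisor $e$ has $e/q$ surviving below it --- only yields $yq$-density, and the sentence ``any gap exceeding $y$ would already have forced a $y$-violating gap in $\mathcal{E}(n)$'' is precisely the assertion that needs a proof. Since both (C1) and (C2) for $\mathcal{D}_{i,y}$ rest on this, the identity $\mathcal{D}_{i,y}=\mathcal{B}_{\theta_{i,y}}$ is not established by what you have written; you acknowledge this in your closing paragraph, so the proposal should be read as a plan, not a solution.

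Two further points on Step 3. First, even granting $\mathcal{D}_{2,y}=\mathcal{B}_{\theta_{2,y}}$, the evaluation of $\theta_{2,y}(n)$ as $y\max_{d\mid n,\,d\in\mathcal{D}_{1,y}}\min(n/d,d)$ is asserted, not derived, and your heuristic mislocates the critical constraint: when a prime $p>P^+(n)$ is adjoined, the new $\mathcal{D}_{1,y}$-divisors of $np$ are exactly the $dp$ with $d\mid n$, $d\in\mathcal{D}_{1,y}$, $p\le yd$ (by Lemma \ref{lemTen}), and the gap that can fail is the one \emph{below the smallest} such $dp$ (of size roughly $p^2/y$), not ``just below $np$'': for $y=2$, $n=8$, $p=5$ the chain for $40$ is $1,2,4,8,20,40$ and the violation is $8\to 20$, far below $np=40$. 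The ratios among the $dp$ themselves, and from old to new elements, are automatically $\le y$, so the correct bookkeeping concentrates on that single low gap; this is where the $\min(n/d,d)$ comes from and it still requires an argument, as does the case $p\mid n$, which you dismiss in half a sentence. Second, your observation that $\mathcal{D}_{2,y}=\mathcal{D}^*_{2,y}$ because the $j=0$ condition is the image of the $j=1$ condition under $d\mapsto n/d$, which preserves $y$-density, is correct and is the one part of Steps 2--3 that is complete as stated.
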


\begin{question}
Let $i\ge 3$. Is $\mathcal{D}^*_{i,y}=\mathcal{B}_{\theta^*_{i,y}}$ for some function $\theta^*_{i,y}$? 
\end{question}

Even without an answer to this question, we can determine the order of magnitude of the counting functions of the sets 
$ \mathcal{D}_{i,y}$ and $ \mathcal{D}^*_{i,y}$, with the help of the following theorem.
\begin{theorem}\label{thm5sets}
For all $i \in \mathbb{N}$ and real $y\ge 2$,
$$
\mathcal{S}_y\subseteq \mathcal{B}_{\theta_l} \subseteq \mathcal{D}^*_{i,y}  \subseteq \mathcal{D}_{i,y}  \subseteq \mathcal{B}_{\theta_u},
$$
where 
$$
\theta_l(n) = \max(y, (yn)^{1/i}), \quad \theta_u(n) = y n^{1/i}.
$$
\end{theorem}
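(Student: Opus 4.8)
### Proof plan for Theorem \ref{thm5sets}

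The plan is to establish the four inclusions separately, proceeding from the outside in. The two easy endpoints are the inclusion $\mathcal{S}_y \subseteq \mathcal{B}_{\theta_l}$, which is immediate because for a $y$-smooth number $n = p_1 \cdots p_k$ we have $p_{j+1} \le y \le \max(y, (y p_1 \cdots p_j)^{1/i}) = \theta_l(p_1 \cdots p_j)$; and the middle inclusion $\mathcal{D}^*_{i,y} \subseteq \mathcal{D}_{i,y}$, which was already observed in the excerpt by taking $j = i-1$ in the definition of $\mathcal{D}^*_{i,y}$. So the real content is the lower bound $\mathcal{B}_{\theta_l} \subseteq \mathcal{D}^*_{i,y}$ and the upper bound $\mathcal{D}_{i,y} \subseteq \mathcal{B}_{\theta_u}$.

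For the upper bound $\mathcal{D}_{i,y} \subseteq \mathcal{B}_{\theta_u}$, I would argue by induction on $i$, the base case $i=1$ being Lemma \ref{lemTen} (where $\theta_u(n) = yn$, matching $\theta(n)=yn$). Suppose $n = p_1 \cdots p_k \in \mathcal{D}_{i,y}$ and fix $j$ with $0 \le j < k$; write $m = p_1 \cdots p_j$ and $n = m \cdot p_{j+1} \cdots p_k$. I want to show $p_{j+1} \le y m^{1/i}$, i.e. that the largest prime factor of $n/m$ is not too large relative to $m$. The key is to use the $y$-dense divisor chain guaranteed by $n \in \mathcal{D}_{i,y}$: the divisors of $n$ lying in $\mathcal{D}_{i-1,y}$ form a $y$-dense sequence from $1$ to $n$, so there is such a divisor $d$ with $d/y < m \cdot p_{j+1}^{\,?} $ — more precisely, one can locate a divisor $d \in \mathcal{D}_{i-1,y}$ in a controlled window, deduce from the inductive hypothesis (applied to $d \in \mathcal{D}_{i-1,y} \subseteq \mathcal{B}_{\theta_u^{(i-1)}}$ with exponent $1/(i-1)$) a constraint on the prime factors of $d$, and then combine the window constraint with that prime-factor constraint. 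The bookkeeping of exactly which divisor to pick and how the exponents $1/(i-1)$ and the extra factor of $y$ from density combine to give $1/i$ is where the main difficulty lies; one should expect to choose the largest $d \in \mathcal{D}_{i-1,y}$, $d \mid n$, with $d \le m$, so that $m < dy$, and then bound $p_{j+1} \le $ (largest prime factor of $n/d$ pushed into a divisor of $n/d$ in $\mathcal{D}_{i-1,y}$) using density of divisors of $n/d$ — which requires knowing that $n/d$ also inherits dense divisibility, a point that must be checked.

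For the lower bound $\mathcal{B}_{\theta_l} \subseteq \mathcal{D}^*_{i,y}$, I would again induct on $i$, with base case $i=1$: if $n = p_1 \cdots p_k$ satisfies $p_{j+1} \le \max(y, (y p_1 \cdots p_j)^{1}) = y p_1 \cdots p_j$ then $n \in \mathcal{D}_{1,y} = \mathcal{D}^*_{1,y}$ by Lemma \ref{lemTen}. For the inductive step, assume $n = p_1 \cdots p_k \in \mathcal{B}_{\theta_l}$ with exponent $1/i$; I must verify, for each $j$ with $0 \le j \le i-1$, that the divisors $d \mid n$ with $d \in \mathcal{D}^*_{j,y}$ and $n/d \in \mathcal{D}^*_{i-1-j,y}$ form a $y$-dense sequence containing $1$ and $n$. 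The natural strategy is: given any target value $t \in [1,n]$, build a divisor $d = p_1 \cdots p_r$ (an initial segment of the sorted prime factors, possibly adjusted) with $d$ close to $t$, and show that $d \in \mathcal{B}_{\theta_l^{(j)}} \subseteq \mathcal{D}^*_{j,y}$ (inductive hypothesis at level $j$) and $n/d \in \mathcal{B}_{\theta_l^{(i-1-j)}} \subseteq \mathcal{D}^*_{i-1-j,y}$ (inductive hypothesis at level $i-1-j$), checking in each case that the defining prime-factor inequalities with exponents $1/j$ and $1/(i-1-j)$ follow from the hypothesis on $n$ with exponent $1/i$. The gap between consecutive achievable $d$'s is at most a factor of $p_{r+1}$, and the hypothesis $p_{r+1} \le \max(y, (y p_1 \cdots p_r)^{1/i})$ must be shown to keep this gap $\le y$ — which is the crux, and is exactly why the exponent $1/i$ (rather than something larger) is needed. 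I expect this verification — that an initial-segment divisor and its cofactor both satisfy the weaker exponent conditions, and that consecutive such divisors are within a factor $y$ — to be the main obstacle, requiring a careful case split according to whether $p_{r+1} \le y$ or $p_{r+1} = (y p_1\cdots p_r)^{1/i}$ and a tracking of how $1/i = 1/j \cdot (\text{something}) $ type relations degrade gracefully.
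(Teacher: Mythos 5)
Your outer two inclusions are fine, and your overall skeleton (induction on $i$ for each of the two substantive inclusions) matches the paper; but both decisive steps are left as acknowledged gaps, and the concrete devices you propose in their place would fail. For $\mathcal{B}_{\theta_l}\subseteq \mathcal{D}^*_{i,y}$, taking $d$ to be an initial segment $p_1\cdots p_r$ of the sorted prime factors cannot work: the cofactor $n/d=p_{r+1}\cdots p_k$ has smallest prime factor $p_{r+1}$, which may be as large as $(yp_1\cdots p_r)^{1/i}$, far exceeding $y$, whereas membership of $n/d$ in $\mathcal{B}_{\theta_l}$ at any level $i-1-j$ forces its smallest prime factor to be at most $\max\bigl(y,(y\cdot 1)^{1/(i-1-j)}\bigr)=y$. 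So the divisors realizing the dense chain must interleave small and large primes, and this is exactly what the paper's key Lemma \ref{lemprop} provides: by induction on $\Omega(n)$ it produces, for every target $R\in[1,yn]$ and every split $v+w=i-1$, a factorization $n=d_vd_w$ with $R/y\le d_w\le R$ and $d_v,d_w$ in $\mathcal{B}_{\theta_l}$ at levels $v,w$ respectively, by attaching the largest prime $p$ of $n$ to $d_v$ or to $d_w$ according to whether $R\le my/p^v$ or not, the verification resting on the inequalities $p^v\le yd_v$, resp.\ $p^w\le yd_w$, which follow from the window constraint together with $p\le\max(y,(ym)^{1/i})$. This construction is the content you label ``the crux'' but do not supply.

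For $\mathcal{D}_{i,y}\subseteq\mathcal{B}_{\theta_u}$ the route you lean toward --- locating a divisor $d$ below $m$ and then using density of the $\mathcal{D}_{i-1,y}$-divisors of the cofactor $n/d$ --- would require $n/d$ to inherit dense divisibility, which the weak definition of $\mathcal{D}_{i,y}$ does not give (that is precisely what distinguishes $\mathcal{D}^*_{i,y}$ from $\mathcal{D}_{i,y}$); you flag this point but do not resolve it, and it cannot be resolved along those lines. The paper's argument avoids cofactors entirely: writing $n=mpr$ with $P^+(m)\le p\le P^-(r)$, it takes the \emph{smallest} divisor $ds\in\mathcal{D}_{i-1,y}$ of $n$ with $d\mid m$, $s\mid pr$, $s>1$, applies the inductive hypothesis to $ds$ itself to get $p\le P^-(s)\le yd^{1/(i-1)}$, hence $dp\ge p^i/y^{i-1}$, and uses the $y$-density of the chain of $\mathcal{D}_{i-1,y}$-divisors of $n$ only once, to get $sd\le ym$ (its predecessor in the chain divides $m$); combining gives $p\le ym^{1/i}$. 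So the proposal has the right overall shape but is missing both key ideas, and the specific constructions it does commit to (initial-segment divisors for the lower bound, dense divisibility of cofactors for the upper bound) are the steps that would break.
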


The proof of Theorem \ref{thm5sets} is given in Section \ref{sec5sets}.

The sets $\mathcal{B}_{\theta_l}$ and $\mathcal{B}_{\theta_u}$ are easier to understand than the sets $ \mathcal{D}_{i,y} $ and $ \mathcal{D}^*_{i,y} $,
since the former two are defined by explicit conditions on the prime factors, whereas the latter two are defined recursively with conditions on the divisors.
Theorem \ref{thmBaxyIntro} provides estimates for the counting functions of $\mathcal{B}_{\theta_l}$ and $\mathcal{B}_{\theta_u}$.
Theorem \ref{thm5sets} then implies the desired estimates  in Theorem \ref{thmDkasymp}.

\begin{corollary}\label{corS}
For $y\ge 2$ we have
$$
\mathcal{S}_y =  \bigcap_{i\ge 0} \mathcal{D}_{i,y}= \bigcap_{i\ge 0} \mathcal{D}^*_{i,y} .
$$
\end{corollary}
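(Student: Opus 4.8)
The plan is to establish the sandwich
$$
\mathcal{S}_y \subseteq \bigcap_{i\ge 0} \mathcal{D}^*_{i,y} \subseteq \bigcap_{i\ge 0} \mathcal{D}_{i,y} \subseteq \mathcal{S}_y,
$$
which forces all three sets to coincide. The first two inclusions come for free from material already in the excerpt: Theorem \ref{thm5sets} gives $\mathcal{S}_y \subseteq \mathcal{B}_{\theta_l} \subseteq \mathcal{D}^*_{i,y}$ for every $i$, hence $\mathcal{S}_y \subseteq \bigcap_i \mathcal{D}^*_{i,y}$, and the inclusion $\mathcal{D}^*_{i,y}\subseteq \mathcal{D}_{i,y}$ noted in the introduction yields $\bigcap_i \mathcal{D}^*_{i,y} \subseteq \bigcap_i \mathcal{D}_{i,y}$.

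The one inclusion requiring an argument is the last, $\bigcap_{i\ge 0} \mathcal{D}_{i,y}\subseteq \mathcal{S}_y$, for which I would again invoke Theorem \ref{thm5sets}, this time the upper bound $\mathcal{D}_{i,y}\subseteq \mathcal{B}_{\theta_u}$ with $\theta_u(n)=y n^{1/i}$. Fix $n\in\bigcap_{i\ge 0}\mathcal{D}_{i,y}$; we may assume $n>1$ and write $n=p_1\cdots p_k$ with primes $p_1\le\cdots\le p_k$. For each fixed $i\ge 1$, membership of $n$ in $\mathcal{B}_{\theta_u}$ gives, taking $j=k-1$,
$$
p_k \le y\,(p_1\cdots p_{k-1})^{1/i}.
$$
Since this holds for every $i\ge 1$ and $p_1\cdots p_{k-1}$ is a fixed positive integer, letting $i\to\infty$ (so that $(p_1\cdots p_{k-1})^{1/i}\to 1$) yields $p_k\le y$. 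Thus every prime factor of $n$ is at most $y$, i.e. $n\in\mathcal{S}_y$, completing the chain.

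There is no serious obstacle here: the substantive work is already contained in Theorem \ref{thm5sets}, and Corollary \ref{corS} amounts to the observation that the bracketing sets $\mathcal{B}_{\theta_l}$ and $\mathcal{B}_{\theta_u}$ both collapse to $\mathcal{S}_y$ as $i\to\infty$. The only mild points to watch are that $\theta_l$ and $\theta_u$ depend on $i$ (so one must intersect over $i$ before passing to the limit), and that the degenerate cases $n=1$ (trivial, as $1$ lies in every set) and $k=1$ (the $j=k-1=0$ instance reading $p_1\le \theta_u(1)=y$) are covered by the same computation.
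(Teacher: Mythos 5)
Your proposal is correct and matches the paper's own proof essentially verbatim: both deduce the first inclusions directly from Theorem \ref{thm5sets} and prove $\bigcap_{i\ge 0}\mathcal{B}_{\theta_u}\subseteq\mathcal{S}_y$ by bounding the largest prime factor $p$ of $m=np$ via $p\le yn^{1/i}$ for every $i$ and letting $i\to\infty$. No gaps; the handling of $n=1$ and the dependence of $\theta_u$ on $i$ is exactly as in the paper.
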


\begin{proof}[Proof of Corollary \ref{corS}]
The first three inclusions of 
$$
\mathcal{S}_y \subseteq  \bigcap_{i\ge 0} \mathcal{D}^*_{i,y}  \subseteq  \bigcap_{i\ge 0} \mathcal{D}_{i,y}  \subseteq  \bigcap_{i\ge 0} \mathcal{B}_{\theta_u}   \subseteq \mathcal{S}_y
$$
follow at once from Theorem \ref{thm5sets}.
To show the last inclusion, first note that $1$ is in each set. Assume $m\ge 2$ satisfies $m \in \bigcap_{i\ge 0} \mathcal{B}_{\theta_u}$.
Let $p$ be the largest prime factor of $m$ and write $m=np$.
Then $m \in   \mathcal{B}_{\theta_u}$ implies $p \le \theta_u(n) = y n^{1/i}$. Since this is true for all $i\ge 0$, it follows that $p\le y$ and $m \in \mathcal{S}_y$. 
\end{proof}

\subsection{An extension of Dickman's function}

The counting function of the set $\mathcal{S}_y$ can be approximated in terms of Dickman's function $\rho(u)$. 
For the other four sets appearing in Theorem \ref{thm5sets}, the counting functions can be estimated in terms of $\rho_{1/i}(u)$, which we define as follows
(see \cite[Thm. 2.9]{Sar}):

For $a\ge 0$, define $\rho_a(u)$ by $\rho_a(u)=0$ for $u<0$ and 
\begin{equation}\label{rhoadef}
\rho_a(u) = 1-\int_0^{\frac{u-1}{1+a}}  \rho_a(v)\, \omega\left(\frac{u-v}{1+av}\right) \frac{dv}{1+av} \qquad (u\ge 0),
\end{equation}
where $\omega(u)$ denotes Buchstab's function.
When $0\le u\le 1$, the integral vanishes and $\rho_a(u)=1$. 
Note that  \eqref{rhoadef} expresses $\rho_a(u)$ based on values of 
$\rho_a(v)$ with $v\le \frac{u-1}{1+a} \le u-1$. 
Thus \eqref{rhoadef} defines $\rho_a(u)$ uniquely for all $u\ge 0$ and provides a method for calculating $\rho_a(u)$ numerically. 
When $a=0$, \eqref{rhoadef} reduces to the convolution identity \cite[Exercise 297]{Ten} for Dickman's function $\rho(u)$, which shows that 
$\rho_0(u)=\rho(u)$ for all $u$. 
When $a=1$, \eqref{rhoadef} coincides with the definition of the function $d(u)$ in \cite{IDD3, PDD},
which plays a role in the distribution of $y$-densely divisible numbers (i.e. $(1,y)$-d.d.) and satisfies
$$
\rho_1(u)=d(u)=\frac{C}{u+1}\left(1+O(u^{-2.03})\right), \qquad C=\frac{1}{1-e^{-\gamma}}=2.280...
$$

\begin{theorem}\label{rhoathm}
Let $a>0$ be fixed. For $u\ge 0$, $\rho_a(u)$ is decreasing in $u$ and 
$$
\rho(u) \le \rho_a(u) \le 1.
$$
Let $-\lambda_a$ be the real zero of \eqref{eqgaseval} with maximal real part.
For the  values of $a$ and $C_a$ in Table \ref{table0}, we have
\begin{equation}\label{rhoasymp0}
\rho_a(u) = \frac{C_a}{(1+au)^{\lambda_a}}\left(1+O_a\left(\frac{1}{u^{2.03}}\right)\right) \qquad (u\ge 1).
\end{equation}
For all $a\ge 1$,  we have
\begin{equation}\label{rhoasymp1}
\rho_a(u) = \frac{C_a}{(1+au)^{\lambda_a}}\left(1+O_a\left(\frac{1}{u^{1+\varepsilon_a}}\right)\right) \qquad (u\ge 1),
\end{equation}
where $C_a$ is a positive constant and $\varepsilon_a >0$. 
For all $a>0$, we have
\begin{equation}\label{rhoasymp2}
\rho_a(u) \asymp_a  \frac{1}{(1+au)^{\lambda_a}}\qquad (u\ge 0).
\end{equation}
\end{theorem}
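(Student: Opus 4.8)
My plan is to read the monotonicity and bounds off \eqref{rhoadef} (and \cite{Sar}), and to obtain the asymptotics from a singularity analysis of a single Laplace transform after passing to the variable $\sigma=\log(1+au)$.

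\textbf{Qualitative properties.} The monotonicity of $\rho_a$ and the bounds $\rho(u)\le\rho_a(u)\le1$ are properties of the solution of \eqref{rhoadef}, partly recorded in \cite{Sar}. Once $\rho_a\ge0$ is known, the subtracted integral in \eqref{rhoadef} is nonnegative (since $\omega\ge0$), whence $\rho_a(u)\le\rho_a(0)=1$; the positivity, and indeed $\rho(u)\le\rho_a(u)$, can be read off from the probabilistic interpretation of $\rho_a$, or by comparing \eqref{rhoadef} with its $a=0$ case---Dickman's convolution identity---the damping factor $(1+av)^{-1}\le1$ only shrinking the subtracted integral, or a posteriori from the asymptotics below together with monotonicity; and monotonicity follows from the differential-delay equation obtained by differentiating \eqref{rhoadef}, once $\rho_a\ge0$ is in hand.

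\textbf{Transform.} Put $g_a(\sigma)=\rho_a\!\left((e^\sigma-1)/a\right)$, so $0\le g_a\le1$ and $g_a\equiv1$ on $[0,\log(1+a)]$. A short computation turns \eqref{rhoadef} into the renewal-type convolution equation $g_a(\sigma)=1-\frac1a\int_0^\sigma g_a(\sigma-\xi)\,\omega\!\left(\frac{e^\xi-1}{a}\right)d\xi$. Laplace transforming, valid for $\re s>0$, yields $\widehat{g_a}(s)=\dfrac{a}{s\left(a+\widehat{K_a}(s)\right)}$ with $\widehat{K_a}(s)=\int_0^\infty e^{-s\xi}\omega\!\left(\frac{e^\xi-1}{a}\right)d\xi=a\int_1^\infty\omega(t)(1+at)^{-s-1}\,dt$. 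Writing $\omega=e^{-\gamma}+(\omega-e^{-\gamma})$ and using that $\omega(t)-e^{-\gamma}$ tends to $0$ faster than any power of $t$, one gets $\widehat{K_a}(s)=\dfrac{e^{-\gamma}(1+a)^{-s}}{s}+E_a(s)$ with $E_a$ entire, so $\widehat{g_a}(s)=a/\Phi_a(s)$ with $\Phi_a(s)=as+e^{-\gamma}(1+a)^{-s}+sE_a(s)$ entire and $\Phi_a(0)=e^{-\gamma}\ne0$. Thus $\widehat{g_a}$ is meromorphic on $\mathbb C$, its poles are exactly the zeros of $\Phi_a$, and $s=0$ is not one---which is why $g_a$ decays. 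The characteristic equation \eqref{eqgaseval} is $\Phi_a(s)=0$, equivalently $a+\widehat{K_a}(s)=0$, and $-\lambda_a$ is its rightmost real zero.

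\textbf{Zero analysis and extraction.} Since $\widehat{g_a}$ is the Laplace transform of a bounded function it is analytic in $\re s>0$, so $\Phi_a$ has no zeros there. A positivity (Perron--Frobenius) argument for the nonnegative renewal kernel $\tfrac1a\omega\!\left(\tfrac{e^\xi-1}{a}\right)$, together with the Riemann--Lebesgue lemma to control $\Phi_a$ at large height (there $\Phi_a(c+it)=ait+O_a(1)$), shows that the dominant zero of $\Phi_a$ is real---hence equals $-\lambda_a$---and simple, so $\Phi_a'(-\lambda_a)>0$ and $C_a:=a/\Phi_a'(-\lambda_a)>0$. Moving the line of integration in the Laplace inversion $g_a(\sigma)=\frac1{2\pi i}\int_{(c_0)}\widehat{g_a}(s)e^{s\sigma}\,ds$ past $-\lambda_a$, collecting the residue $C_ae^{-\lambda_a\sigma}$, estimating the shifted integral (via $\widehat{g_a}(c+it)\ll_{a,c}(1+|t|)^{-1}$ on a vertical line $\re s=c$ free of zeros of $\Phi_a$, after truncation or one integration by parts), and returning to $u$, gives $\rho_a(u)=C_a(1+au)^{-\lambda_a}+(\text{error})$. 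The three estimates \eqref{rhoasymp0}--\eqref{rhoasymp2} reflect how far left the contour can be pushed: \eqref{rhoasymp2} needs only some gap to the left of $-\lambda_a$, after which $\rho_a(u)\asymp_a(1+au)^{-\lambda_a}$ follows (the case of bounded $u$ being trivial)---this can also be seen directly from \eqref{rhoadef} by a two-sided induction over unit intervals in which $\lambda_a$ enters through the calibration of the leading term; \eqref{rhoasymp1} needs a quantitative zero-free region to the left of $-\lambda_a$, which can be produced when $a\ge1$; and \eqref{rhoasymp0} needs the next zeros of $\Phi_a$ localized precisely, to $\re s\le-\lambda_a-2.03$ for the pairs $(a,C_a)$ of Table~\ref{table0}---for $a=1$ (where $\lambda_1=1$ and the exponent $2.03$ appears) this is precisely the zero-distribution input of \cite{PDD}, and for the remaining tabulated values it follows by the same contour method plus a rigorous numerical localization of the next zero(s). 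The hardest step should be this zero analysis for $\Phi_a$: the Dirichlet-type integral for $\widehat{K_a}(s)$ converges only in $\re s>0$, so the standard domination $|\widehat{K_a}(\sigma+it)|\le\widehat{K_a}(\sigma)$ is unavailable precisely in the half-plane where the relevant zeros lie, and one must argue instead through the continuation $\tfrac{e^{-\gamma}(1+a)^{-s}}{s}+E_a(s)$ and through positivity at the level of the renewal kernel; the restriction $a\ge1$ in \eqref{rhoasymp1} reflects that the uniformity deteriorates as $a\to0^+$, where $\rho_a$ degenerates toward Dickman's $\rho$, whose decay is not of power type.
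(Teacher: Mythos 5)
Your transform setup is the same as the paper's (the change of variables $e^z=1+au$, the convolution equation, the Laplace transform whose poles are the zeros of the entire continuation \eqref{eqgaseval}, contour shifting, numerical localization of the next zeros for the tabulated values, and \cite{PDD} for $a=1$). The genuine gap is in your treatment of \eqref{rhoasymp2} for general $a>0$. You assert that "a positivity (Perron--Frobenius) argument for the nonnegative renewal kernel" shows the dominant zero is real and simple, but, as you yourself note in the same paragraph, the kernel $\frac1a\Omega_a$ has infinite mass and the Dirichlet-type integral diverges for $\re(s)\le 0$, so exactly where the relevant zeros lie no positivity/domination argument is available, and you offer no substitute; the alternative you gesture at ("two-sided induction over unit intervals") is not developed. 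The paper never proves that $-\lambda_a$ is the only pole on the line $\re(s)=-\lambda_a$. Instead it does two things you are missing: (a) it proves that every pole with $\re(s)>-\lambda_a-1$ is \emph{simple} via the differential identity $\frac{\partial}{\partial a}g_a(s)=\frac{s}{a}g_a(s+1)-\frac{s+1}{a}g_a(s)$ (Corollary \ref{cordgda}, which rests on the Gamma-integral representation of $g_a$ in Lemma \ref{lemgJ}); and (b) in Lemma \ref{lemag0} it allows for possible complex poles with real part $-\lambda_a$ and rules out the damage they could do by an averaging/liminf argument: if the resulting trigonometric polynomial had liminf $\le 0$, one could exhibit $z_k<z_k+r_k$ with $F_a(z_k)<F_a(z_k+r_k)$, contradicting the monotonicity of $\rho_a$. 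Without (a) and (b), your proof of \eqref{rhoasymp2}, of the simplicity of the pole at $-\lambda_a$, and of $C_a>0$ is incomplete. (For \eqref{rhoasymp1} you likewise assert a zero-free region "can be produced when $a\ge1$" with no mechanism; the paper invokes \cite[Lemma 9]{SPA} there.)

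A secondary but related gap concerns the qualitative properties, which are not decoration: monotonicity of $\rho_a$ is the essential input to step (b) above. Your sketches from \eqref{rhoadef} do not work as stated. Nonnegativity is not evident from \eqref{rhoadef}; the comparison "the damping factor $(1+av)^{-1}\le 1$ only shrinks the subtracted integral" ignores that the recursion feeds $\rho_a(v)$ (inductively \emph{larger} than $\rho(v)$) back into the subtracted term and changes the argument of $\omega$, so the induction is not monotone; and differentiating \eqref{rhoadef} produces an integral against $\omega'$, which changes sign, so $\rho_a\ge 0$ alone does not yield $\rho_a'\le 0$. The paper obtains all three facts at once from the arithmetic interpretation: by \cite[Thm.~2.9]{Sar}, $B_a(x,x^{1/u})\sim x\rho_a(u)$ for fixed $u$, and then $\Psi(x,y)\le B_a(x,y)\le x$ gives $\rho(u)\le\rho_a(u)\le 1$, while $B_a(x,x^{1/u_2})\le B_a(x,x^{1/u_1})$ for $u_1\le u_2$ gives monotonicity. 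You should either import that argument or find a correct direct proof before relying on monotonicity downstream.
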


 \begin{figure}[h]
\begin{center}
\includegraphics[height=6.531cm,width=12cm]{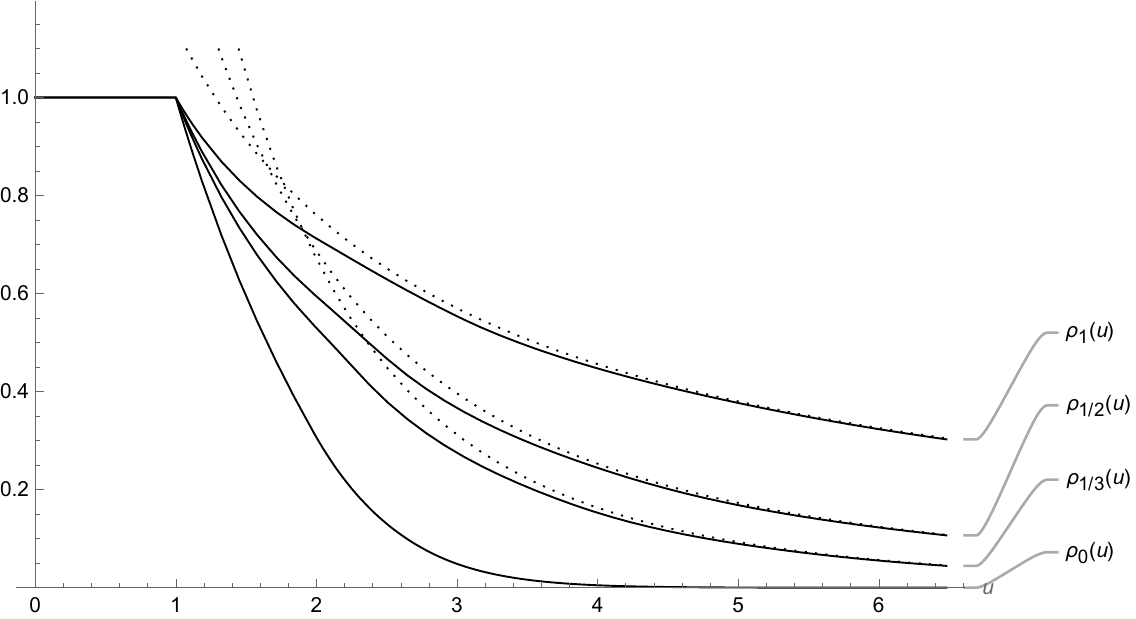}
\caption{Graphs of $\rho_a(u)$ (solid), for $a=1,\frac{1}{2}, \frac{1}{3},0$, and of the corresponding asymptotes $\frac{C_a}{(1+au)^{\lambda_a}}$ (dotted), when $a\neq 0$.}
\label{figure1}
\end{center}
\end{figure}

\begin{table}[h]
  \begin{tabular}{ | l | l | l | l | l |}
   \hline
    $a$ & $\lambda_a$ & $C_a$\\ \hline
    1 & 1  & $\frac{1}{1-e^{-\gamma}}$  \\ \hline
1/2 & 2.46206... & 3.7815...  \\ \hline
1/3 & 4.20605... & 5.7645...\\ \hline
1/4 & 6.15900... & 8.3827...\\ \hline
1/5 & 8.27925... & 11.812... \\ \hline
 \end{tabular}
\qquad
  \begin{tabular}{ | l | l | l | l | l |}
   \hline
    $a$ & $\lambda_a$ & $C_a$\\ \hline
1/6 & 10.5395... & 16.265... \\ \hline
1/7 & 12.9203... &  22.000... \\ \hline
1/8 & 15.4074... &  29.333... \\ \hline
1/9 & 17.9892... &  38.648...  \\ \hline
1/10 & 20.6568... & 50.410...  \\ \hline
    \end{tabular}
  \caption{Values of $\lambda_a$,  $C_a$.}
  \label{table0}
  \end{table}

We conjecture that \eqref{rhoasymp0} holds for all $0<a \le 1$. The proof of Theorem \ref{rhoathm} is given in Section \ref{secrhoa}.

\subsection{Estimates for the counting functions}

Let $\mathcal{B}_{a,y}$ be the set corresponding to $\theta(n) = y n^{a}$ and let $B_{a}(x,y)$ denote the corresponding counting function.
That is, $B_a(x,y)$ equals
$$
1+\sum_{k\ge 1}|\{n=p_1\cdots p_k \le x, \, p_1\le \cdots \le p_k, \, p_{j+1}\le y(p_1\cdots p_j)^a, \, 0\le j <k\}|.
$$
Similarly, let $\mathcal{B}^*_{a,y}$ be the set corresponding to $\theta(n) = \max(y, (yn)^{a})$ and 
let $B^*_{a}(x,y)$ denote the corresponding counting function.
Let $\tilde{\mathcal{B}}_{a,y}$ (resp.  $\tilde{\mathcal{B}}^*_{a,y}$) denote the set of squarefree members of $\mathcal{B}_{a,y}$ (resp. $\mathcal{B}^*_{a,y}$), and let $\tilde{B}_a(x,y)$ (resp. $\tilde{B}^*_a(x,y)$) be the corresponding counting functions.
Let 
$$
u=\frac{\log x}{\log y}.
$$

\begin{theorem}\label{thmBaxyIntro}
 Let $a>0$ be fixed. Uniformly for $x\ge y\ge 2$, we have
$$
B_a(x,y) = x \,\rho_a(u) \left(1+O_a\left(\frac{1}{\log y} \right)\right),
$$
$$
\tilde{B}_a(x,y) = \frac{x \,\rho_a(u)}{\zeta(2)} \left(1+O_a\left(\frac{1}{\log y} \right)\right),
$$
where $\zeta(2)=\pi^2/6$, and
$$
 B_a(x,y) \asymp_a B^*_{a}(x,y) \asymp_a x \rho_a(u) .
$$

Uniformly for $x\ge y\ge y_0(a)$, we have
$$
 \tilde{B}_a(x,y) \asymp_a \tilde{B}^*_{a}(x,y) \asymp_a x \rho_a(u) .
$$

\end{theorem}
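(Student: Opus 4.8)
The plan is to start from an exact Buchstab-type identity for $B_a(x,y)$ and match it with the equation \eqref{rhoadef} defining $\rho_a$, by induction on $u=\log x/\log y$. For the identity, given $n=p_1\cdots p_k\notin\mathcal{B}_{a,y}$ with $p_1\le\cdots\le p_k$, let $j$ be the least index for which $p_{j+1}>y(p_1\cdots p_j)^a$ and put $m=p_1\cdots p_j$, $l=n/m$. Then $m\in\mathcal{B}_{a,y}$, and membership of $m$ in $\mathcal{B}_{a,y}$ forces $p_j^{1+a}\le ym^a$, hence $P^+(m)\le(ym^a)^{1/(1+a)}<ym^a$, so every prime factor of $l$ exceeds $ym^a$; conversely every pair $(m,l)$ with $m\in\mathcal{B}_{a,y}$, $l>1$, $P^-(l)>ym^a$ and $ml\le x$ arises in this way, and $n\leftrightarrow(m,l)$ is a bijection. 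With $\Phi(t,z)=\#\{l\le t:P^-(l)>z\}$ this gives
$$
\lfloor x\rfloor=\sum_{\substack{m\in\mathcal{B}_{a,y}\\ m\le x}}\Phi(x/m,\,ym^a),
\qquad\text{hence}\qquad
\lfloor x\rfloor-B_a(x,y)=\sum_{\substack{m\in\mathcal{B}_{a,y}\\ m\le x}}\bigl(\Phi(x/m,\,ym^a)-1\bigr),
$$
and one records the analogues for $\tilde B_a$, $B^*_a$, $\tilde B^*_a$: in the squarefree case one replaces $\lfloor x\rfloor$ by $Q(x)=x/\zeta(2)+O(\sqrt x)$ and $\Phi$ by the count of squarefree $l$ with $P^-(l)>z$ (which differs from $\Phi(t,z)$ by a factor $1+O(1/z)$), and for $\theta^*(n)=\max(y,(yn)^a)$ one notes this alters $\theta(n)=yn^a$ only by a bounded power, so $\mathcal{B}_{a,y^{\min(1,a)}}\subseteq\mathcal{B}^*_{a,y}\subseteq\mathcal{B}_{a,y^{\max(1,a)}}$.

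Next I would substitute into the identity the standard uniform estimate for $\Phi(t,z)$ in terms of Buchstab's function $\omega$ — with leading term $\omega(\log t/\log z)\,t/\log z$, and with the refinement $\Phi(t,z)=1+\pi(t)-\pi(z)$ near $\log t/\log z=1$ — together with the inductive hypothesis $B_a(t,y)=t\,\rho_a(\log t/\log y)\,(1+O_a(1/\log y))$ for $\log t/\log y<u$. Writing $m=y^v$, so that $\log(x/m)=(u-v)\log y$ and $\log(ym^a)=(1+av)\log y$, partial summation against $dB_a(t,y)$ converts the right-hand side of the identity into
$$
x\int_0^{(u-1)/(1+a)}\rho_a(v)\,\omega\!\left(\frac{u-v}{1+av}\right)\frac{dv}{1+av}\;+\;O_a\!\left(\frac{x}{\log y}\right),
$$
the upper limit being exactly the $v$ at which $x/m$ drops below $ym^a$; by \eqref{rhoadef} the main term equals $x(1-\rho_a(u))$, so that $B_a(x,y)=x\,\rho_a(u)+O_a(x/\log y)$. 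This already yields the asymptotic whenever $\rho_a(u)\gg_a1/\log y$. To obtain the full relative error $O_a(1/\log y)$ — and, for bounded $y$, the order of magnitude $B_a\asymp_a x\rho_a(u)$ for all $x\ge y\ge2$ — I would bootstrap: normalise $R(x,y)=B_a(x,y)/(x\rho_a(u))$, feed the values of $R$ at smaller arguments back through the identity, and use that the kernel $\omega\!\left(\frac{u-v}{1+av}\right)\frac1{1+av}$ has total mass $1-\rho_a(u)<1$ on $[0,(u-1)/(1+a)]$, so the deviation $R-1$ is contracted at each iteration; the same scheme run with one-sided inequalities gives the two-sided bound $B_a\asymp_a x\rho_a(u)$. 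The squarefree identity runs identically and produces the extra $1/\zeta(2)$ from $Q(x)$ and the squarefree $\Phi$-analogue; there $\tilde B_a\asymp_a x\rho_a(u)$ needs $y\ge y_0(a)$, since only then can every squarefree element of $\mathcal{B}_{a,y}$ be extended by a further prime, supplying the lower bound (the restriction being genuinely necessary, cf.\ \cite[Lemma 2.2]{Sar}). Finally $B^*_a\asymp_a x\rho_a(u)$ and $\tilde B^*_a\asymp_a x\rho_a(u)$ follow from the sandwich above together with $\rho_a(cu)\asymp_a\rho_a(u)$, a consequence of \eqref{rhoasymp2}.

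The hard part will be carrying out this bootstrap uniformly across the entire range $x\ge y\ge2$ and all $u\ge1$, where $\rho_a(u)$ may be only polynomially small: one has to show the error does not accumulate as it is propagated through the recursion in $v$. The two quantitative ingredients are (i) a form of the $\Phi$-estimate whose error is genuinely usable near the endpoint $v=(u-1)/(1+a)$, where the $\omega$-asymptotic degenerates and $\Phi(x/m,ym^a)$ is essentially the number of primes in a short interval, and (ii) the sub-unit total mass $1-\rho_a(u)$ of the kernel together with the monotonicity, positivity and decay rate $\lambda_a$ of $\rho_a$ established in Theorem \ref{rhoathm}, which together force the contraction. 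The remaining steps — passing from sums to integrals, the squarefree correction factor $1+O(1/z)$, and the deductions for $B^*_a$ and $\tilde B^*_a$ — are routine.
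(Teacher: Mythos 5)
Your starting point coincides with the paper's: the exact identity $\lfloor x\rfloor=\sum_{m\in\mathcal{B}_{a,y}}\Phi(x/m,ym^a)$ is Lemma \ref{Lem1} (from \cite{SPA}), and feeding in a Buchstab-type estimate for $\Phi$ (Lemma \ref{PhiLemma}) to reach an approximate version of \eqref{rhoadef} is how the paper also begins (Lemmas \ref{LemFinalSum}, \ref{lem4}). The genuine gap is in the step you yourself call the hard part, and the mechanism you propose for it does not close. Write $\Delta(u)=B_a(y^u,y)/y^u-\rho_a(u)$; subtracting \eqref{rhoadef} from the approximate recursion gives
\begin{equation*}
\Delta(u)=-\int_0^{(u-1)/(1+a)}\Delta(v)\,\omega\!\left(\frac{u-v}{1+av}\right)\frac{dv}{1+av}+E(u).
\end{equation*}
If you assume inductively $|\Delta(v)|\le C\rho_a(v)/\log y$ for $v<u$, the fact that the $\rho_a$-weighted kernel has mass $1-\rho_a(u)<1$ only yields $|\Delta(u)|\le C(1-\rho_a(u))/\log y+|E(u)|$, i.e.\ a bound of size $C/\log y$, not $C\rho_a(u)/\log y$; since $\rho_a(u)\asymp u^{-\lambda_a}$ decays, the inductive hypothesis is not recovered and there is no contraction for the normalized deviation $R-1$. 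The mass bound controls only additive errors; the gain by the factor $\rho_a(u)$ must come from sign cancellation in the iterated (negative) kernel, i.e.\ from the resolvent of the convolution equation, and nothing in your sketch supplies that. The problem is even more acute for the order-of-magnitude claims with bounded $y$: there $1/\log y\asymp1$, so an additive error $O(x/\log y)$ is of the same size as $x$ and says nothing against a main term $x u^{-\lambda_a}$; to get usable errors one must first show the error in the $\Phi$-substitution decays in $u$, which itself requires an a priori bound $B_a(t,y)\ll t(\log t/\log y)^{-r}$ and a bootstrap in the exponent $r$.

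After the common starting point, the paper handles exactly these two issues by a different device: it converts the identity into the convolution equation \eqref{conv} for $G_y(z)=B_a(x,y)/x$, solves it exactly by Laplace transforms against $\widehat{F}_a$ (via \eqref{FaLap}, leading to \eqref{last}), so that the resolvent kernel is $F_a'$; the decay estimate \eqref{hypeq2} for $F_a'$ turns an error of size $e^{-(1+r)z}/\log y$ into one of size $e^{-\lambda_a z}/\log y$, the admissible exponent $r$ is bootstrapped from $0$ up to $\lambda_a$, and the resulting constant $\eta_{a,y}$ is pinned to $1+O(1/\log y)$ by evaluating at $y=x$. To rescue your induction-on-$u$ scheme you would need an equivalent substitute for this resolvent estimate (for instance, that the Neumann series of the kernel $-\omega(\tfrac{u-v}{1+av})\tfrac{1}{1+av}$ is bounded by $O_a(\rho_a(u)/\rho_a(v))$ uniformly), and monotonicity, positivity and \eqref{rhoasymp2} alone do not give it. The peripheral parts of your proposal are fine and match the paper: the squarefree analogue producing $1/\zeta(2)$, and the sandwich $\mathcal{B}_{a,y^{\min(1,a)}}\subseteq\mathcal{B}^*_{a,y}\subseteq\mathcal{B}_{a,y^{\max(1,a)}}$ for $B^*_a$ and $\tilde B^*_a$ (modulo the range where $y^{\min(1,a)}<2$, which needs the paper's trick in Corollary \ref{corBaxy} of appending a fixed power of $2$).
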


\begin{proof}
This follows from Corollaries \ref{cor1Baxy} and \ref{corBaxy}.
\end{proof}

\subsection{An extension of the Schinzel-Szekeres function}

As in \cite{Sar}, for the real parameter $\beta$ we define $F_\beta(1)=1$ and
$$
F_\beta(n) = \max_{d|n \atop d>1} d (P^-(d))^\beta \qquad (n\ge 2),
$$
where $P^-(d)$ denotes the smallest prime factor of $d>1$.
When $\beta=1$, this is the Schinzel-Szekeres function \cite{Ten86,SSF}. 
Note that we have
\begin{equation}\label{eqSSFcount}
 |\{n\le x: F_\beta(n)/n \le y^\beta \}|=B_{1/\beta}(x,y) ,
\end{equation}
which we estimated in Theorem \ref{thmBaxyIntro}.
For $x\ge 1$, $y\ge 1$, define
$$
\mathcal{A}_\beta(x,y):= \{n\le x : F_\beta(n)\le xy\}, \quad \tilde{\mathcal{A}}_\beta(x,y):=\mathcal{A}_\beta(x,y) \cap \{n :  \mu^2(n)=1\},
$$
and $A_\beta(x,y)=|\mathcal{A}_\beta(x,y)|$, $\tilde{A}_\beta(x,y)=|\tilde{\mathcal{A}}_\beta(x,y)|$. 
The integers in $\tilde{\mathcal{A}}_\beta(D,1)$ 
make up the intersection of the support of the upper bound beta sieve and the support of the lower bound beta sieve of level $D$ \cite[Ch. 11]{Opera}.

\begin{theorem}\label{thmAbeta}
Let $\beta>0$. For $x\ge 1$, $y\ge 1$, we have
$$
A_\beta(x,y) \asymp_\beta \tilde{A}_\beta(x,y) \asymp_\beta x \left(\frac{\log 2y}{\log 2xy}\right)^{\lambda_{1/\beta}},
$$
where $\lambda_{1/\beta}$ is as in Theorem \ref{rhoathm} and Table \ref{tablelambda}.
\end{theorem}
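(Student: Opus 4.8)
The plan is to reduce the counting of $\mathcal{A}_\beta(x,y)$ to the sets $\mathcal{B}_{1/\beta, z}$ already handled in Theorem \ref{thmBaxyIntro}, by exploiting the structural identity \eqref{eqSSFcount} together with a scaling argument in the parameter. Write $a = 1/\beta$. The key observation is that $n \in \mathcal{A}_\beta(x,y)$ means $F_\beta(n) \le xy$, i.e. $F_\beta(n)/n \le (xy)/n$; setting the threshold $z = z(n)$ by $z^\beta = xy/n$, membership is governed by the same inequality as in \eqref{eqSSFcount} but with a threshold that depends on $n$ itself. The standard device (as in \cite{Sar} for the case $\beta = 1$, i.e.\ the Schinzel–Szekeres function) is to dyadically decompose the range of $n$: on $n \in (x/2^{k+1}, x/2^k]$ the relevant threshold $z$ satisfies $z^\beta \asymp 2^k y$, so the count of such $n$ that lie in $\mathcal{A}_\beta(x,y)$ is, up to the usual $O_\beta(1)$ distortions, comparable to $B_{1/\beta}(x/2^k, (2^k y)^{1/\beta}) - B_{1/\beta}(x/2^{k+1}, (2^k y)^{1/\beta})$, and one sums over $0 \le k \ll \log(xy)$. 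Here one must be slightly careful because $F_\beta(n)$ is the *maximum* over divisors $d \mid n$, $d > 1$, of $d (P^-(d))^\beta$, not simply $n (P^-(n))^\beta$; but the worst divisor is easy to control, and in fact $n \in \mathcal{A}_\beta(x,y)$ iff every divisor $d > 1$ of $n$ satisfies $d(P^-(d))^\beta \le xy$, which (writing $n = p_1 \cdots p_k$ with $p_1 \le \cdots \le p_k$) is equivalent to a single condition on the largest prime $p_k$ in terms of $n/p_k$ — exactly the shape feeding $B_{1/\beta}$.

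The execution would proceed as follows. First I would establish the clean equivalence: $n = p_1 \cdots p_k \in \mathcal{A}_\beta(x,y)$ iff $p_k (p_1)^\beta \le xy$ together with the hereditary condition that $n/p_k$ (and inductively all its initial subproducts) already satisfies the analogous inequality relative to a smaller box. Here the crucial simplification is that $F_\beta(n) = \max(F_\beta(n/p_k), \, n \cdot p_1^\beta)$ or something close, so the "hard divisor" is the full product $n$ paired with the smallest prime; this collapses the condition to a form directly comparable to $\{n : F_\beta(n)/n \le z^\beta\}$ for a suitable $z$. Second, I would run the dyadic decomposition over $n \asymp x/2^k$ and invoke Theorem \ref{thmBaxyIntro} (specifically $B_a(x,y) \asymp_a x\rho_a(u)$, with $u = \log x/\log y$) on each block, so that the $k$-th block contributes $\asymp_\beta (x/2^k)\,\rho_{1/\beta}(u_k)$ with $u_k = \log(x/2^k)/\log((2^k y)^{1/\beta}) \asymp \beta \log(x/2^k)/\log(2^k y)$. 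Third, I would use the power-law asymptotic \eqref{rhoasymp2}, $\rho_a(u) \asymp_a (1+au)^{-\lambda_a}$, to write each block's contribution as $\asymp_\beta (x/2^k)\bigl(\log(2^k y)/\log(2 x y)\bigr)^{\lambda_{1/\beta}}$ after rearranging $1 + a u_k \asymp \log(2xy)/\log(2^k y)$, and then sum the geometric-type series over $k$: the sum is dominated by its first term $k = 0$ (the factor $2^{-k}$ beats the slowly growing logarithmic factor $(\log 2^k y)^{\lambda_{1/\beta}}$), giving the claimed order $x\bigl(\log 2y/\log 2xy\bigr)^{\lambda_{1/\beta}}$. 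Fourth, for the squarefree count $\tilde{A}_\beta(x,y)$ I would repeat the argument using the squarefree estimate $\tilde{B}_a(x,y) \asymp_a x\rho_a(u)$ from Theorem \ref{thmBaxyIntro}; since both the upper and lower bounds in that theorem hold for $x \ge y \ge 2$ (the full set, not the squarefree-with-$y_0$ caveat — note $\tilde B_a \asymp_a x\rho_a(u)$ is stated for $x\ge y\ge 2$ only in combination with $B_a^*$; I would instead use the two-sided $\tilde B_a(x,y) = x\rho_a(u)\zeta(2)^{-1}(1+O_a(1/\log y))$, valid for $x\ge y\ge 2$, on the blocks where $2^k y$ is large, and absorb the finitely many small-$y$ blocks into the $O_\beta$ constant), the same dyadic sum goes through and produces the matching two-sided bound, which simultaneously shows $A_\beta \asymp_\beta \tilde A_\beta$.

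The main obstacle I anticipate is handling the boundary and small-parameter regimes uniformly: when $y$ is close to $1$ or $k$ is small, the "dimension" $u_k$ may be small (even $\le 1$), where $\rho_a$ is constant and the power-law \eqref{rhoasymp2} is only an $\asymp_a$ statement with no decay, and where the reduction to $B_{1/\beta}$ needs the threshold $(2^k y)^{1/\beta} \ge 2$ to even apply Theorem \ref{thmBaxyIntro}. One must check that the blocks with $2^k y = O_\beta(1)$ contribute only $O_\beta(x)$ in total (trivially, since there are $O_\beta(1)$ of them each counting $\le x/2^k$ integers) and that this is absorbed into $x(\log 2y/\log 2xy)^{\lambda_{1/\beta}}$ — which holds because when $y = O(1)$ we have $\log 2y \asymp 1$ and the target is $\asymp x(\log 2xy)^{-\lambda_{1/\beta}}$, a *smaller* quantity, so one actually needs the decay from the main blocks to not be swamped; the resolution is that for $y$ bounded the main contribution genuinely comes from $n$ near $x$ with $k$ bounded, and the $\log 2xy$ in the denominator arises from $1 + a u_0 \asymp \log 2xy / \log 2y$. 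Getting the bookkeeping of $\log 2y$ versus $\log 2xy$ versus $\log(x/2^k)$ exactly right across all of $x\ge 1$, $y \ge 1$ — rather than just $x \ge y \ge 2$ — is the technical heart, and mirrors the analogous (known) computation for the classical Schinzel–Szekeres function in \cite{Sar} and \cite{SSF}.
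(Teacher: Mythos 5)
Your upper bound is essentially the paper's own argument: the same dyadic decomposition in $n$, the same passage from $F_\beta(n)\le xy$ on $n\in(x2^{-k-1},x2^{-k}]$ to $F_\beta(n)/n\le 2^{k+1}y$ and hence to $B_{1/\beta}(x/2^k,(2^{k+1}y)^{1/\beta})$, and the same geometric summation. The gap is in your lower bound, which is also exactly what is needed to get $\tilde A_\beta\gg_\beta x(\log 2y/\log 2xy)^{\lambda_{1/\beta}}$ and hence $A_\beta\asymp_\beta\tilde A_\beta$. You claim each block count is comparable to $B_{1/\beta}(x/2^k,z_k)-B_{1/\beta}(x/2^{k+1},z_k)$ and that this difference has the expected order. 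Nothing you have justifies the lower bound for such a difference: Theorem \ref{thmBaxyIntro} gives $B_a(t,z)=t\rho_a(u)(1+O_a(1/\log z))$ and \eqref{rhoasymp2} gives $\rho_a(u)\asymp_a(1+au)^{-\lambda_a}$ only up to unspecified constants, and $\rho_a$ is decreasing, so the main term $x\rho_a(u_x)-\tfrac{x}{2}\rho_a(u_{x/2})$, with $u_{x/2}=u_x-\log 2/\log z$, could a priori be negative: one only knows $\rho_a(u_x)\le\rho_a(u_{x/2})\le\rho_a(u_x-1)\ll_a\rho_a(u_x)$ with an implied constant that may well exceed $2$. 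The genuine asymptotic $\rho_a(u)\sim C_a(1+au)^{-\lambda_a}$, which would fix this, is proved only for $a\ge 1$ and the tabulated values $a=1/i$, $i\le 10$, not for general $a=1/\beta$ (the paper leaves open possible oscillation from complex zeros of $g_a$ on the line $\re(s)=-\lambda_a$). The step is repairable — e.g.\ by widening the blocks to ratio $R(\beta)$ so that $c_a-C_aM_a/R>0$, or by a slow-variation bound on $\rho_a$ via the derivative estimate \eqref{hypeq2} — but you did not supply such an ingredient, and your squarefree lower bound has the additional unaddressed problem that the main block needs its threshold $\ge y_0(1/\beta)$, which fails when $y$ is near $1$ (your "absorb the small blocks" remark helps only the upper bound).

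The paper avoids all differencing with a one-line containment: if $n\le x$, $\mu^2(n)=1$ and $F_\beta(n)\le y_0^\beta y n$, then $F_\beta(n)\le (y_0^\beta x)y$, so by \eqref{eqSSFcount} one has $\tilde A_\beta(y_0^\beta x,y)\ge\tilde B_{1/\beta}(x,y_0y^{1/\beta})\asymp_\beta x(\log 2y/\log 2xy)^{\lambda_{1/\beta}}$; the shift by $y_0=y_0(1/\beta)$ simultaneously handles $y$ near $1$ and the squarefree threshold, and rescaling $x$ gives the lower bound, which together with $\tilde A_\beta\le A_\beta$ and your (correct) upper bound finishes the theorem. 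I would adopt this in place of your block-by-block comparability. A minor point: your preliminary claim $F_\beta(n)=\max(F_\beta(n/p_k),np_1^\beta)$ is false as stated (for $n=2\cdot 7\cdot 11$, $\beta=1$, the divisor $77$ gives $539$, while your formula yields $308$); the correct recursion removes the \emph{smallest} prime, $F_\beta(n)=\max\bigl(nP^-(n)^\beta,\,F_\beta(n/P^-(n))\bigr)$, which is what underlies \eqref{eqSSFcount}. Since you only ever invoke \eqref{eqSSFcount} itself, this does not damage the rest, but the "clean equivalence" you propose to establish first should be dropped or corrected.
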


When $\beta=1$, we have $A_1(x,y) \asymp x \frac{\log 2y}{\log 2xy}$, which was established by Saias \cite{Saias1}, 
and later improved to an asymptotic estimate \cite{SSF}.

We will derive Theorem \ref{thmAbeta} in Section \ref{secAbeta} from Theorem \ref{thmBaxyIntro}.

\subsection{Asymptotic estimates for the exponent $\lambda_a$}

When $a\ge 1$, the explicit inequalities \cite[Eq. (8), (9)]{SPA} imply
$$
\lambda_a = \frac{e^{-\gamma}}{a}\left(1+\frac{e^{-\gamma} \log (a+1)}{a} +\frac{r_a}{a} \right) \qquad (a\ge 1),
$$
where $|r_a|\le\frac{2}{3}$.
Theorem \ref{thmlambda} reveals the behavior of $\lambda_a$ as $a\to 0^+$.

\begin{theorem}\label{thmlambda}
The exponent $\lambda_a$, defined in Theorem \ref{rhoathm}, satisfies
$$
\lambda_a = \frac{1}{a}\left(\log \left(\frac{1}{a}\right) -1 +o(1)\right)\qquad (a \to 0^+).
$$
\end{theorem}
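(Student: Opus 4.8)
The plan is to extract the asymptotics of $\lambda_a$ as $a\to 0^+$ directly from the characterization of $-\lambda_a$ as the real zero of \eqref{eqgaseval} with maximal real part, i.e.\ from the equation satisfied by $\lambda_a$ that comes out of the analysis in Section \ref{secrhoa} (the Mellin/Laplace-type transform attached to \eqref{rhoadef}). Writing $s=-\lambda_a$, the defining equation has the shape of a balance condition; after substituting the scaling $s = -\lambda_a$ one obtains, schematically, an equation of the form
$$
\int_0^\infty \bigl(1 - e^{-\lambda_a t}\bigr)\,\frac{e^{-t}}{t}\,\text{(something involving }a\text{)}\,dt = \text{const},
$$
equivalently a relation that can be written (after the change of variable $v=at$ appropriate to the $(1+av)$ weights in \eqref{rhoadef}) as $\lambda_a \log(1/a) - \lambda_a + \log \lambda_a \approx C$ for an explicit constant. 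First I would write out \eqref{eqgaseval} explicitly, isolate the dominant term as $a\to 0^+$, and observe that $\lambda_a\to\infty$; then a first-order analysis gives $\lambda_a \sim \frac{1}{a}\log\frac1a$. Feeding this back in and tracking the next term produces the $-1$, which is the content of the claimed expansion $\lambda_a = \frac1a\bigl(\log\frac1a - 1 + o(1)\bigr)$.

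More concretely, I expect the defining equation to be equivalent to something like
$$
e^{-\gamma} = a\lambda_a \exp\!\left(\frac{1}{a\lambda_a}\,g(a\lambda_a)\right)
\quad\text{or}\quad
\log\frac{1}{a} = \log \lambda_a + a\lambda_a + o(a\lambda_a),
$$
after using the known small-argument behavior of $\omega$ (namely $\omega(u)\to 0$ rapidly, and $\int_0^\infty$-type normalizations that produce the $e^{-\gamma}$ constant, as already appears in the $a=1$ case $\lambda_1=1$, $C_1=1/(1-e^{-\gamma})$). Setting $t = a\lambda_a$, the equation becomes $\log(1/a) = \log(t/a) + t + o(t) = \log t + \log(1/a) + t + o(t)$, which is not quite right, so more care is needed; the correct bookkeeping should yield $t + \log t = \log(1/a) + O(1)$, whence $t = \log(1/a) - \log\log(1/a) + O(1)$ and therefore $\lambda_a = \frac{1}{a}\bigl(\log\frac1a - \log\log\frac1a + O(1)\bigr)$. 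Since $\log\log\frac1a = o(\log\frac1a)$, this already gives the leading term; obtaining the sharper form with the explicit $-1$ requires identifying the $O(1)$ constant precisely, which I would do by comparing with the $a\ge 1$ expansion $\lambda_a = \frac{e^{-\gamma}}{a}(1 + \cdots)$ quoted from \cite{SPA} to fix normalization constants, and by using the integral representation of $\omega$ to evaluate the relevant Euler-type constant.

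The main obstacle I anticipate is making the passage from the implicit transcendental equation for $\lambda_a$ to a clean asymptotic rigorous: specifically, controlling the error terms uniformly as $a\to 0^+$ when $\lambda_a$ is simultaneously blowing up, so that the small-$u$ expansion of $\omega$ (or whichever special function enters \eqref{eqgaseval}) is being evaluated in a regime where its argument is $a\lambda_a\asymp \log(1/a)\to\infty$ rather than small — one must check which asymptotic regime of $\omega$ (or of the full kernel) is actually relevant and justify dropping the subdominant contributions. A secondary technical point is confirming that $-\lambda_a$ genuinely is the rightmost real zero and that no complex zero interferes with the asymptotic extraction; but since Theorem \ref{rhoathm} already establishes \eqref{rhoasymp2} with this $\lambda_a$, that structural fact can be quoted, and the remaining work is the analytic estimation of the location of that zero as $a\to 0^+$.
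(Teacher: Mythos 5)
Your plan---treat $g_a(-\lambda_a)=0$ as a transcendental equation, do a dominant-balance analysis of \eqref{eqgaseval}, and refine---does not survive contact with the actual structure of the problem, and your own bookkeeping already signals this: the "corrected" relation you guess, $t+\log t=\log(1/a)+O(1)$ with $t=a\lambda_a$, gives $\lambda_a=\frac1a\bigl(\log\frac1a-\log\log\frac1a+O(1)\bigr)$, which is \emph{inconsistent} with the statement to be proved (the second-order term is exactly $-1$; there is no $\log\log$ term, and the discrepancy $\frac1a\log\log\frac1a$ is far larger than the allowed $o(1/a)$). The reason the naive balance fails is that at the relevant scale the explicit terms of \eqref{eqgaseval} do not balance each other: with $\lambda\asymp\frac1a\log\frac1a$ one has $|s|\asymp\frac1a\log\frac1a$ while $\frac{e^{-\gamma}}{a}(1+a)^{\lambda}\asymp a^{-2}$, so the location of the zero is governed by the oscillatory integral involving $\omega(u)-e^{-\gamma}$, i.e.\ by delicate cancellation that term-by-term estimates cannot see. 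Moreover, even if you could solve an approximate equation, you would still need two separate facts which your sketch does not supply: (i) that $g_a$ really has a zero with real part near $-\frac1a(\log\frac1a-1)$ (upper bound for $\lambda_a$), and (ii) that it has \emph{no} zero with larger real part (lower bound). Quoting Theorem \ref{rhoathm} does not help here: it only defines $\lambda_a$ and says nothing about its size as $a\to0^+$; and fixing constants by comparison with the $a\ge1$ expansion from \cite{SPA} cannot work, since that is a different asymptotic regime.

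The paper's proof is of a different nature. Using $J(u)=\int_u^\infty e^{-t}t^{-1}dt$, it rewrites $g_a$ (Lemma \ref{lemgJ}) so that at negative integers $g_a(-n)\,a e^{\gamma}=\frac{n!}{2\pi i}\int_{\mathcal C}e^{s-I(as)}s^{-n-1}\,ds$, and then runs a saddle-point analysis of this contour integral. The saddle equation $n=r-e^{ar}+1$ has a real solution exactly for $n\le 1+\frac1a\log\frac1{ea}$; in that range Proposition \ref{propreal} shows $g_a(-n)>0$, and Corollary \ref{corlambdalb} converts this positivity (together with the fact that the rightmost zero must be real because $\rho_a\ge0$, and a continuity-in-$a$ argument) into the lower bound $\lambda_a\ge\frac1a\log\frac{1-\eta}{ea}$. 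Just beyond this threshold the saddle points become complex and coalesce, and Proposition \ref{proplambdaub} shows the integral is governed by the Airy-type integral $K(\nu)$; the sign changes of $K$ at its zeros $\nu_k$ force actual zeros of $g_a$ at $s_k=-\frac1a\bigl(L-1+\frac{1+o(1)}2(3\nu_k aL)^{2/3}\bigr)$, giving the matching upper bound (Corollary \ref{corlambdaub}). Thus the constant $-1$ arises from the threshold $\log\frac1{ea}=\log\frac1a-1$ at which the real saddle disappears---a mechanism entirely absent from your proposal---and filling this in would essentially mean redoing the paper's argument.
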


Thus, $\lambda_{1/i}=i \bigl(\log(i)-1+o(1)\bigr)$ as $i\to \infty$, 
as claimed in Theorem \ref{thmDkasymp}.
The proof of Theorem \ref{thmlambda}, which takes a considerable amount of effort, is given in Section \ref{seclambda}.

\section{Proof of Theorem \ref{thm5sets}}\label{sec5sets}

In this section, we write $\theta_{i,y}(n) := \theta_l(n) = \max(y, (yn)^{1/i})$ and $\mathcal{L}_{i,y}:=\mathcal{B}_{\theta_l}$ for $i\ge 1$. We define $\mathcal{L}_{0,y}:= \mathbb{N}$. 

\begin{lemma}\label{lemprop}
Let $i\ge 1$, $y\ge 2$ and $n \in \mathcal{L}_{i,y}$. Then for all $1\le R\le yn$ and all integers $v, w \ge 0$ with $v+w=i-1$, there is a factorization
$n=d_v d_w$ with $R/y \le d_w \le R$ and $d_w \in \mathcal{L}_{w,y}$, $d_v \in \mathcal{L}_{v,y}$. 
\end{lemma}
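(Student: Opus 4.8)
The plan is to argue by induction on the number $k$ of prime factors of $n$ (with multiplicity), keeping $i\ge1$ and $y\ge2$ fixed; write $n=p_1\cdots p_k$ with $p_1\le\cdots\le p_k$ and $P_j=p_1\cdots p_j$. Two reductions come first. If $R>n$, take $d_w=n$, $d_v=1$: then $R/y\le n\le R$, $d_v=1$ lies in every $\mathcal{B}_\theta$, and $d_w=n\in\mathcal{L}_{i,y}\subseteq\mathcal{L}_{w,y}$ (for $w\ge1$ the threshold $\max(y,(yt)^{1/w})$ dominates $\max(y,(yt)^{1/i})$ since $w\le i$; for $w=0$, $\mathcal{L}_{0,y}=\mathbb{N}$). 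If $n$ is $y$-smooth, then every divisor $d$ of $n$ is $y$-smooth, so each prime factor of $d$ is $\le y$ and hence $d\in\mathcal{L}_{j,y}$ for every $j\ge0$; moreover the divisors of a $y$-smooth number form a $y$-dense sequence, since if $e<e'$ are consecutive divisors and $p\mid e'$ then $e'/p$ is a divisor $<e'$, so $e'/p\le e$ and $e'\le pe\le ye$. Thus for $R\le n$ one takes $d_w$ to be the largest divisor of $n$ with $d_w\le R$ (so $d_w\ge R/y$ by $y$-density, the case $d_w=n$ being trivial) and $d_v:=n/d_w$. Henceforth assume $R\le n$ and $n$ not $y$-smooth; then $p_k>y$, and since $p_1\le\theta_l(1)=y$ this forces $k\ge2$.

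The one structural fact we need is a bound on $p_k$: from $p_k>y$ and the defining inequality $p_k\le\max(y,(yP_{k-1})^{1/i})$ we get $p_k\le(yn')^{1/i}$, i.e.
\[
p_k^{\,i}\le y\,n',\qquad n':=n/p_k=P_{k-1}.
\]
As $w\le i-1$ this gives $p_k^{\,w+1}\le p_k^{\,i}\le yn'$, and if $v\ge1$ it also gives $p_k^{\,v}\ge p_k>y$. Note that $n'\in\mathcal{L}_{i,y}$ (a prefix of $n$) has $k-1$ prime factors, so the induction hypothesis applies to $n'$, with the same pair $(v,w)$.

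We split on the size of $R$. If $R\ge p_k^{\,w+1}$: apply the induction hypothesis to $n'$ with $R':=R/p_k$, which lies in $[1,yn']$ since $R\ge p_k^{\,w+1}\ge p_k$ and $R/p_k\le n/p_k=n'$; this gives $n'=d_v'd_w'$ with $R/(yp_k)\le d_w'\le R/p_k$, $d_w'\in\mathcal{L}_{w,y}$, $d_v'\in\mathcal{L}_{v,y}$. Set $d_w:=d_w'p_k$ and $d_v:=d_v'$. Then $R/y\le d_w\le R$ and $d_v\in\mathcal{L}_{v,y}$; and $d_w\in\mathcal{L}_{w,y}$ because $p_k$ is the largest prime of $n$ and $d_w'\ge R/(yp_k)\ge p_k^{\,w}/y$, so the single extra condition needed to append $p_k$, namely $p_k\le\max(y,(yd_w')^{1/w})$, is satisfied. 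If instead $R<p_k^{\,w+1}$: apply the induction hypothesis to $n'$ with $R':=R$, which lies in $[1,yn']$ since $R<p_k^{\,w+1}\le yn'$; this gives $n'=d_v'd_w'$ with $R/y\le d_w'\le R$, $d_w'\in\mathcal{L}_{w,y}$, $d_v'\in\mathcal{L}_{v,y}$. Set $d_w:=d_w'$ and $d_v:=d_v'p_k$. Then $d_w$ has the required properties directly, and $d_v\in\mathcal{L}_{v,y}$ because $d_v'=n'/d_w'\ge n'/R>p_k^{\,v}/y$ (using $p_k^{\,i}\le yn'$ and $R<p_k^{\,w+1}$), so again only $p_k\le\max(y,(yd_v')^{1/v})$ is needed, and it holds. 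When $v=0$ (resp.\ $w=0$) the relevant membership is automatic, as $\mathcal{L}_{0,y}=\mathbb{N}$. This closes the induction. In each case $d_vd_w=d_v'd_w'p_k=n'p_k=n$, so this really is a factorization of $n$.

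The ``hard part'' here is not any single estimate but the bookkeeping: the whole argument rests on the inequality $p_k^{\,i}\le yn'$, and the threshold for $R$ must be taken to be precisely $p_k^{\,w+1}$, so that in the large-$R$ branch the rescaled factor $d_w'\approx R/p_k$ is automatically big enough to absorb the large prime $p_k$ and remain in $\mathcal{L}_{w,y}$, while in the small-$R$ branch the complementary factor $d_v'\approx n'/R$ is automatically big enough to absorb $p_k$ and remain in $\mathcal{L}_{v,y}$. One should also verify in passing that the factor receiving $p_k$ exceeds $1$ (so that the condition to append $p_k$ is indeed an inequality $p_k\le\max(y,(yd')^{1/e})$ for the appropriate exponent $e\in\{v,w\}$, rather than the impossible $p_k\le\theta_l(1)=y$); this follows from $d_w'\ge p_k^{\,w}/y>1$ when $w\ge1$ and $d_v'>p_k^{\,v}/y>1$ when $v\ge1$.
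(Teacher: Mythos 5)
Your proof is correct and follows essentially the same route as the paper: induction on $\Omega(n)$, peeling off the largest prime $p_k$, applying the inductive hypothesis to $n'=n/p_k$, and attaching $p_k$ to $d_w$ or to $d_v$ according to the size of $R$, with membership verified through the key inequality $p_k^{i}\le y n'$ (valid once $p_k>y$). The only differences are cosmetic: you split at $R=p_k^{w+1}$ instead of the paper's threshold $yn'/p_k^{v}$, and you dispose of the $y$-smooth and $R>n$ cases up front rather than inside the case analysis.
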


\begin{proof}
Let $i\ge 1$ and $y\ge 2$. We prove the lemma by induction on $k=\Omega(n)$. The lemma clearly holds for $n=1$, i.e. $k=0$, since $1=1\cdot 1$. 
When $k=1$, $n=p$, a prime. Then $n\in \mathcal{L}_{i,y}$ means $n=p \le \theta_{i,y}(1)=y$. 
If $n<R\le yn$, we take $d_w=n$ and $d_v=1$. If $1\le R \le n$, take $d_w=1$ and $d_v=n$. 
Since $n=p\le y$, both $1$ and $n$ are members of $\mathcal{L}_{v,y}$ and of $\mathcal{L}_{w,y}$ for all $v,w\ge 0$. 

Let $k\ge 1$ and assume the lemma holds for all $n$ with $\Omega(n)\le k$. 
Now let $\Omega(n)=k+1$, $n=mp$, $p\ge P^+(m)$, $\Omega(m)=k$ and $n\in  \mathcal{L}_{i,y}$.
Then $m\in  \mathcal{L}_{i,y}$ and $p \le \theta_{i,y}(m)=\max(y,(ym)^{1/i})$. 
Let $1\le R \le yn$ and  $v,w \ge 0$ be integers with $v+w=i-1$. We consider two cases:

Case (1): Assume $1\le R \le \frac{my}{p^v}$. By the inductive hypothesis, there is a factorization of $m=d_v d_w$ with $R/y \le d_w \le R$,
$d_w  \in \mathcal{L}_{w,y}$, $d_v \in \mathcal{L}_{v,y}$. Define $d_v'=d_v p$. Then $d_v=m/d_w$ and $d_w\le R \le \frac{my}{p^v}$ implies 
$d_v \ge \frac{p^v}{y}$. Thus $p^v \le y d_v$, which means that $d_v' \in \mathcal{L}_{v,y}$. The desired factorization is $n=d_w d_v'$. 

Case (2): Assume $ \frac{my}{p^v}< R \le ny$. Define $R':=R/p \le my$. If $p\le y$, then $n\in \mathcal{S}_y$ and the conclusion holds. Thus we may assume $p>y$. 
We claim that $R'\ge 1$. 
Indeed, if $R'<1$, then $R<p$, so $p>R>\frac{my}{p^v}$, which implies $p^i \ge p^{v+1} >my$, contradicting $p \le \theta_{i,y}(m)=\max(y,(ym)^{1/i})$.
Thus $1\le R' \le my$. By the inductive hypothesis, we can write $m=d_v d_w$, with $R'/y \le d_w \le R'$,
$d_w  \in \mathcal{L}_{w,y}$, $d_v \in \mathcal{L}_{v,y}$. Let $d_w'=d_w p$, so $R/y \le d_w' \le R$. We have
$$
d_w \ge \frac{R'}{y}= \frac{R}{py} > \frac{m}{p^{v+1}},
$$
since $R>\frac{my}{p^v}$. Thus
$$
\frac{p^w}{d_w} \le \frac{p^{v+w+1}}{m} = \frac{p^i}{m} \le y,
$$
since $p \le \theta_{i,y}(m)=\max(y,(ym)^{1/i})$ and $p>y$. It follows that $p^w \le y d_w$, which means that $d_w'=p d_w \in \mathcal{L}_{w,y}$. 
The desired factorization is $n=d_w' d_v$. 
\end{proof}

\begin{proof}[Proof of Theorem \ref{thm5sets}]

(i) For the first inclusion, note that $\mathcal{S}_y = \mathcal{B}_\theta$ with $\theta(n)=y$ for all $n\ge 1$. 
Thus the claim $\mathcal{S}_y\subseteq \mathcal{B}_{\theta_l}$
is a consequence of $\theta(n) = y \le  \max(y, (yn)^{1/i})=\theta_l(n)$ for all $n\ge 1$. 

(ii) We show the second inclusion by induction on $i$. 
The case $i=1$ follows from Lemma \ref{lemTen}. 
Let $i\ge 2$ and assume that  $\mathcal{L}_{j,y} \subseteq  \mathcal{D}^*_{j,y} $ for $0\le j \le i-1$. 
Let $n\in \mathcal{L}_{i,y} $ and let $v,w \ge 0$ be integers with $v+w=i-1$. Let $1\le R \le yn$.  
By Lemma \ref{lemprop}, there is a factorization
$n=d_v d_w$ with $R/y \le d_w \le R$ and $d_w \in \mathcal{L}_{w,y}$, $d_v \in \mathcal{L}_{v,y}$.
Since $0\le v, w \le i-1$, the inductive hypothesis ensures that $d_w \in \mathcal{D}^*_{w,y}$, $d_v \in \mathcal{D}^*_{v,y}$. Thus $n\in \mathcal{D}^*_{i,y}$.

(iii) The third inclusion is obvious from the definition of $\mathcal{D}_{i,y}$ and $ \mathcal{D}^*_{i,y}$. 

(iv) We show the last inclusion by induction on $i$. 
The case $i=1$ follows from Lemma \ref{lemTen}. 
 Assume that $n\in \mathcal{D}_{i,y}$ for some $i\ge 2$ and let $p$ be an arbitrary prime factor of $n$. 
Write $n=m p r$ with $P^+(m) \le p \le P^-(r)$, where $P^+(k)$ (resp. $P^-(k)$) is the largest (resp. smallest) prime factor of $k\ge 2$, $P^+(1)=1$ and $P^-(1)=\infty$.
Let $ds$ be the smallest divisor of $n$ such that $ds \in \mathcal{D}_{i-1,y}$, $d|m$, $s|pr$ and $s>1$.
Since $n\in \mathcal{D}_{i-1,y}$, there is at least one such divisor. 
By the inductive hypothesis, $p\le P^-(s) \le y d^{1/(i-1)}$ so that $dp \ge p^i/y^{i-1}$. Now $pd \le sd \le ym$, since the sequence of divisors of 
$n$, that are in $\mathcal{D}_{i-1,y}$, is $y$-dense. It follows that
$ym \ge  p^i/y^{i-1}$, that is $p \le y m^{1/i}$. Thus $n\in \mathcal{B}_{\theta_u}$.
\end{proof}

\section{Proof of Theorem \ref{rhoathm}}\label{secrhoa}

\subsection{Proof that $\rho_a(u)$ is decreasing in $u$ and that $\rho(u)\le \rho_a(u)\le 1$}

Let $\mathcal{B}_{a,y}$ be the set corresponding to $\theta(n) = y n^{a}$ and let $B_{a}(x,y)$ denote the corresponding counting function.
The following preliminary estimate of $B_a(x,y)$, which assumes $u$ is fixed, is established in \cite[Thm. 2.9]{Sar} by induction on $\lfloor u \rfloor$.

\begin{proposition}
Let $a>0$ and $u\ge 1$ both be fixed and let $y=x^{1/u}$. We have
$$
B_{a}(x,y) \sim  x \rho_a(u) \qquad (x\to \infty).
$$
\end{proposition}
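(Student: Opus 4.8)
The plan is to prove the asymptotic $B_a(x,y) \sim x\rho_a(u)$ for fixed $a>0$ and fixed $u \ge 1$ by induction on $\lfloor u \rfloor$, decomposing an integer $n \in \mathcal{B}_{a,y}$ according to its largest prime factor. First I would record the base case: when $0 \le u \le 1$, i.e. $y \le x < y$ is impossible, so actually $1 \le u \le 1$ forces $x = y$; more usefully, for $1 \le u < $ something small the set $\mathcal{B}_{a,y} \cap [1,x]$ consists of $1$ together with primes $p \le \min(x,y) = y$ (since the condition $p_2 \le y p_1^a$ only bites once there are at least two prime factors, and a single prime $p_1 = p$ must satisfy $p \le \theta(1) = y$), whence $B_a(x,y) = 1 + \pi(y) \sim y/\log y = x\rho_a(u)$ because $\rho_a(u) = 1$ for $u \le 1$ — wait, that requires $x = y$, so more carefully for $1 \le u \le 1+a$ or so one writes $n = 1$ or $n = p$ with $p \le y$, giving $B_a(x,y) = 1 + \pi(y)$ while $x\rho_a(u) = x = y$; one checks these agree asymptotically only at $u=1$, and for $u$ slightly above $1$ one already needs the recursion. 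So the genuine starting point is $\lfloor u \rfloor = 1$: I would write each $n \in \mathcal{B}_{a,y} \cap (1,x]$ as $n = mp$ with $p = P^+(n) \ge P^+(m)$, note that $n \in \mathcal{B}_{a,y}$ forces $p \le y m^a$ (equivalently $m \ge (p/y)^{1/a}$) and that $m \in \mathcal{B}_{a,y}$ automatically, then sum over $p$ in dyadic-type ranges.

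The heart of the induction step is the identity
$$
B_a(x,y) = 1 + \sum_{\substack{p \le \theta(m),\ m \in \mathcal{B}_{a,y}\\ mp \le x}} 1 = 1 + \sum_{p \le y} \Bigl( B_a\bigl(x/p, y\bigr) - B_a\bigl((p/y)^{1/a^+}, y\bigr)\Bigr) + \cdots,
$$
which after more careful bookkeeping becomes, writing $p = y^t$ so that $m$ ranges over $\mathcal{B}_{a,y}$ with $m \le x/p$ and $m > (p/y)^{1/a}$,
$$
B_a(x,y) = 1 + \sum_{y^{(1-?)} < p \le y} \pi\text{-type terms} + \sum_p B_a(x/p, y).
$$
Then I would apply the inductive hypothesis to each $B_a(x/p,y)$: with $x/p = y^{u - t}$ and $u - t < \lfloor u \rfloor$ once $t > \{u\}$, one gets $B_a(x/p,y) \sim (x/p)\rho_a(u-t)$ uniformly (the uniformity in the range of $t$ being the point where one must be slightly careful, treating the boundary $t$ near $\{u\}$ separately since there $\rho_a$ is continuous and the measure of the bad set is $o(1)$). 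Converting the sum over primes $p = y^t$ into an integral via the prime number theorem, $\sum_{p \le z} g(p) = \int_2^z g(w)\, d\pi(w) \approx \int g(w)\,dw/\log w$, and substituting $w = y^t$, the main term becomes
$$
x \int \rho_a(u-t)\, (\text{weight})\, dt,
$$
and the remaining task is to match this integral against the defining equation \eqref{rhoadef} for $\rho_a$. This matching — carrying out the change of variables that turns the prime-summation integral, with its $1/\log w$ density and the constraint $w \ge (w'/y)^{1/a}$ coming from dense divisibility, into precisely the kernel $\omega\bigl(\tfrac{u-v}{1+av}\bigr)\tfrac{1}{1+av}$ against $\rho_a(v)$ — is the main obstacle, because it requires correctly identifying the variable $v$ (roughly the "size exponent" of the part of $n$ below the top prime) and verifying the Jacobian $\tfrac{1}{1+av}$ and the Buchstab weight $\omega$ arise from counting the largest prime in a friable-type range.

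Concretely I expect the argument to run as follows. Fix $a > 0$; I induct on $n_0 := \lfloor u \rfloor \ge 1$, the statement being: for each fixed $u$ with $\lfloor u\rfloor = n_0$ and $y = x^{1/u}$, $B_a(x,y) = x\rho_a(u)(1 + o(1))$ as $x \to \infty$. For the step, decompose $n \in \mathcal{B}_{a,y}$, $1 < n \le x$, as $n = mp$, $p = P^+(n)$; the membership condition gives $m \in \mathcal{B}_{a,y}$ and $(p/y)^{1/a} \le m \le x/p$, the upper constraint on $p$ being $p \le y^{1+o(1)} \cdot (\text{stuff})$ — precisely $p \le \theta(m) = ym^a \le y\,(x/p)^a$, i.e. $p^{1+a} \le y x^a$, so $p \le (yx^a)^{1/(1+a)} = y^{(u a + 1)/(1+a)}$, a clean cutoff. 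Summing and using $B_a(x/p,y) \sim (x/p)\rho_a(u - t)$ where $p = y^t$, together with the lower truncation removing $m \le (p/y)^{1/a} = y^{(t-1)/a}$ (which by induction contributes $\sim y^{(t-1)/a}\rho_a(\cdot)$, a lower-order amount that I will show is absorbed), and replacing the prime sum by $\int dt / t$ (from $d\pi(y^t) \sim y^t \log y\, dt/(t\log y)$... i.e. $\sum_{p} f(p) \leftrightarrow \int f(y^t)\, y^t \, dt / t$), I arrive at
$$
B_a(x,y) \sim x \int_1^{(ua+1)/(1+a)} \rho_a(u - t)\, \frac{dt}{t} \cdot (\text{and the subtracted friable piece}).
$$
The final step is a calculus verification that this expression equals $x\rho_a(u)$, using \eqref{rhoadef} and the definition $\omega(u) = $ Buchstab's function (for which $u\,\omega(u) = 1 + \int_1^{u-1}\omega(v)\,dv$ type identities will do the matching after substituting $v = $ the exponent of $m$); I would present the substitution $t \mapsto$ (new variable) explicitly but defer the purely mechanical integral manipulations, noting only that they reproduce exactly the kernel $\tfrac{1}{1+av}\,\omega\bigl(\tfrac{u-v}{1+av}\bigr)$ and the domain $0 \le v \le \tfrac{u-1}{1+a}$. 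Since the excerpt attributes this Proposition to \cite[Thm.~2.9]{Sar}, I would in fact simply cite that reference for the detailed computation and sketch only the decomposition and the role of the induction on $\lfloor u\rfloor$ as above.
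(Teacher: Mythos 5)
Two concrete problems keep your sketch from working. First, the base case is wrong: for $x\le y$ \emph{every} integer $n\le x$ belongs to $\mathcal{B}_{a,y}$, since each prime factor satisfies $p_{j+1}\le n\le y\le y(p_1\cdots p_j)^a$; hence $B_a(x,y)=\lfloor x\rfloor$, not $1+\pi(y)$, and $1+\pi(y)\not\sim y=x\rho_a(1)$. Second, and more seriously, your central identity drops the constraint $P^+(m)\le p$. A pair $(m,p)$ with $m\in\mathcal{B}_{a,y}$ and $(p/y)^{1/a}\le m\le x/p$ always gives $mp\in\mathcal{B}_{a,y}$ (one may insert $p$ anywhere in the chain), but a given $n$ arises from one such pair for \emph{every} prime it can shed, so $\sum_p\bigl(B_a(x/p,y)-B_a((p/y)^{1/a},y)\bigr)$ overcounts with unbounded multiplicity; if instead you keep the condition $P^+(m)\le p$, the summand is a two-parameter count (members of $\mathcal{B}_{a,y}$ of size $\le x/p$ \emph{and} largest prime factor $\le p$) which is not a value of $B_a$, so the induction as you set it up does not close.

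The step you defer as ``purely mechanical'' is precisely where this route breaks: the kernel $\omega\bigl(\tfrac{u-v}{1+av}\bigr)\tfrac{1}{1+av}$ in \eqref{rhoadef} is of Buchstab type and does not arise from a largest-prime-factor recursion for $\mathcal{B}_{a,y}$ (no such functional equation for $\rho_a$ is available). It arises from the complementary factorization of \emph{all} integers: every $N\le x$ is uniquely $N=nr$ with $n\in\mathcal{B}_{a,y}$ and $P^-(r)>yn^a$, i.e. $\lfloor x\rfloor=\sum_{n\in\mathcal{B}_{a,y}}\Phi(x/n,yn^a)$ (Lemma \ref{Lem1}). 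Writing $n=y^v$, Lemma \ref{PhiLemma} gives $\Phi(x/n,yn^a)\approx \tfrac{x}{n}\,\omega\bigl(\tfrac{u-v}{1+av}\bigr)\tfrac{1}{(1+av)\log y}$, the terms with $r>1$ are confined to $n\cdot yn^a\le x$, i.e. $v\le\tfrac{u-1}{1+a}$, and partial summation using the inductive hypothesis $B_a(t,y)\sim t\rho_a(v)$ --- needed only for $v\le\tfrac{u-1}{1+a}\le u-1$, which is exactly why induction on $\lfloor u\rfloor$ (with the trivial base $B_a(t,y)=\lfloor t\rfloor$ for $t\le y$) succeeds --- converts the identity into \eqref{rhoadef} and yields $B_a(x,y)\sim x\rho_a(u)$. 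This is also the skeleton of the paper's uniform argument in Section \ref{secBaxy}. Citing \cite[Thm.~2.9]{Sar}, as the paper does, is of course acceptable, but the derivation you sketch in its place would not go through: the decomposition by the largest prime factor must be replaced by the decomposition into a $\mathcal{B}_{a,y}$-part and a rough cofactor.
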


Let $\Psi(x,y)$ denote the number of $y$-smooth integers up to $x$. With the conditions as in the proposition and $x\to \infty$,
$$
x \rho(u) \sim \Psi(x,y) \le B_{a}(x,y) \sim x \rho_a(u) \sim  B_{a}(x,y) \le x,
$$
which implies $\rho(u)\le  \rho_a(u) \le 1$. 
If $u_1\le u_2$, then 
$$
x \rho_a(u_2) \sim  B_{a}(x,x^{1/u_2}) \le B_{a}(x,x^{1/u_1}) \sim x \rho_a(u_1),
$$
which shows that $\rho_a(u)$ is decreasing in $u$.

\subsection{Asymptotic estimates for $\rho_a(u)$ via Laplace transforms.}

We will often make implicit use of the following estimates for Buchstab's function, where $\Gamma(u)$ denotes the gamma function.
\begin{lemma}\label{omegaest}
We have
\begin{enumerate}[(i)]
\item $|\omega'(u)|\le 1/\Gamma(u+1) \quad (u\ge 0)$,
\item $|\omega(u)-e^{-\gamma}| \le 1/\Gamma(u+1) \quad (u\ge 0)$.
\end{enumerate}
\end{lemma}

\begin{proof} See \cite[Lemma 2.1]{PDD}.
\end{proof}

With the change of variables 
$$e^z=1+au,  \qquad e^t=1+av,$$ 
$$F_a(z):=\rho_a((e^z-1)/a)=\rho_a(u), \qquad \Omega_a(z):=\omega((e^z-1)/a) =\omega(u),$$ 
equation \eqref{rhoadef} (with the upper limit in the integral replaced by $u$) becomes
 \begin{equation}\label{FaDef}
 F_a(z) =  1 - \frac{1}{a} \int_{0}^{z} F_a(t)  \, \Omega_a(z-t) \, dt \qquad (z\ge 0).
 \end{equation}
Taking Laplace transforms, we obtain, for $\re(s)>0$, 
$$
\widehat{F}_a(s) = \frac{1}{s} - \frac{1}{a} \widehat{F}_a(s)\,  \widehat{\Omega}_a(s).
$$
Solving for $\widehat{F}_a(s) $, we get
\begin{equation}\label{FaLap}
\widehat{F}_a(s) =  \frac{1}{s(1+\frac{1}{a} \widehat{\Omega}_a(s)) } .
\end{equation}
The poles of $\widehat{F}_a(s)$ are the zeros of
$$
g_a(s) := \frac{1}{\widehat{F}_a(s)} =  s(1+\frac{1}{a} \widehat{\Omega}_a(s)) =s+s\int_0^\infty \omega(u) \frac{du}{(1+au)^{s+1}},
$$
where the integral converges for $\re(s)>0$. Writing
\begin{equation}\label{eqgaseval}
g_a(s)=s + \frac{e^{-\gamma}}{a (1+a)^s} 
+s \int_1^\infty \bigl( \omega(u) - e^{-\gamma}\bigr)   \frac{du}{(1+au)^{s+1}},
\end{equation}
shows that $g_a(s)$ is entire and allows us to evaluate $g_a(s)$ numerically.

Theorem \ref{rhoathm} is implied by the following result, which makes the error term more precise when $a=1/i$, $i=1,\ldots,10$.

\begin{theorem}\label{thmFara}
(i) For the values of $a$, $\lambda_a$, $\mu_a$ shown in Table \ref{table1}, $\widehat{F}_a(s)$ has a simple pole at $s=-\lambda_a$ and
no other poles with $\re(s)\ge -\mu_a$, and
\begin{equation}\label{rhoasy}
F_a(z)=C_a e^{-\lambda_a z} + O_a\left(e^{-\mu_a z}\right) \qquad (z\ge 0).
\end{equation}

(ii)
When $a>1$,  $\widehat{F}_a(s)$ has a simple pole at $s=-\lambda_a$ where $\lambda_a\in (0,1)$
and no other poles with $\re(s)\ge -\lambda_a -1- \varepsilon_a$, for some $\varepsilon_a>0$, and
$$
F_a(z)=C_a e^{-\lambda_a z} (1+O_a(e^{-z(1+\varepsilon_a)})) \qquad (z\ge 0).
$$

(iii)
 When $a>0$ and  $-\lambda_a$ is the maximal real part of the set of poles of  $\widehat{F}_a(s)$, then
$$
F_a(z) \asymp_a e^{-\lambda_a z}\qquad (z\ge 0).
$$
\end{theorem}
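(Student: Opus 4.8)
The plan is to base everything on the Laplace transform $\widehat F_a(s)=1/g_a(s)$ from \eqref{FaLap}: a contour shift in the inversion formula for $F_a$ delivers the sharp statements (i) and (ii), while (iii) rests on positivity of the same transform. From the representation \eqref{eqgaseval}, in which $\omega(u)-e^{-\gamma}$ decays faster than any power of $u$, one sees that $g_a$ is entire and that $\widehat\Omega_a(s)\to0$ as $|\im s|\to\infty$ uniformly on any strip $-M\le\re s\le c$, so that $g_a(s)=s(1+\tfrac1a\widehat\Omega_a(s))\sim s$ there. Since $F_a$ is bounded, continuous and monotone, for $z>0$ and $c>0$ one has $F_a(z)=\tfrac1{2\pi i}\int_{(c)}\widehat F_a(s)e^{sz}\,ds$; subtracting $\tfrac1s$, whose inversion integral over $(c)$ equals $1$, yields $F_a(z)=1+\tfrac1{2\pi i}\int_{(c)}\bigl(\widehat F_a(s)-\tfrac1s\bigr)e^{sz}\,ds$, and since $\widehat F_a(s)-\tfrac1s=-\widehat\Omega_a(s)/(a\,g_a(s))=O(|s|^{-2})$ for $|\im s|$ large, the contour can be pushed to the left. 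The rightmost zero of $g_a$ is the real number $-\lambda_a$; when it is simple, its residue contributes $C_ae^{-\lambda_a z}$ with $C_a=1/g_a'(-\lambda_a)>0$ (positivity because $g_a$ is real on $\mathbb R$ with $g_a(0)=e^{-\gamma}/a>0$, so $g_a$ crosses $0$ increasingly at $-\lambda_a$). Pushing past $s=0$ recrosses the pole of $\tfrac1s$, and the resulting $-1$ cancels the standalone $1$; this bookkeeping is common to all parts.

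For part (i) it suffices, for each of the ten tabulated values of $a$, to certify that $g_a$ has no zero with $\re s\ge-\mu_a$ other than the simple zero at $-\lambda_a$. For $|\im s|$ large this is automatic from $g_a(s)\sim s$; on the remaining bounded rectangle one evaluates $g_a$ rigorously via \eqref{eqgaseval} and verifies the claim by the argument principle or by an explicit positive lower bound for $|g_a(s)|$ off a small disc about $-\lambda_a$. Shifting the contour to $\re s=-\mu_a$ then picks up only the residues at $-\lambda_a$ and $0$, giving $F_a(z)=C_ae^{-\lambda_a z}+\tfrac1{2\pi i}\int_{(-\mu_a)}\bigl(\widehat F_a(s)-\tfrac1s\bigr)e^{sz}\,ds$; on the new line $|g_a(s)|$ is bounded away from $0$ and the integrand is $O(|s|^{-2})$ for $|\im s|$ large and bounded near the real axis, so the remainder is $O_a(e^{-\mu_a z})$. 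This is \eqref{rhoasy}, and passing back through $e^z=1+au$ gives \eqref{rhoasymp0}.

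For part (ii), with $a>1$, the leading zeros must be located uniformly in $a$. First, $\lambda_a\in(0,1)$: $g_a(0)=e^{-\gamma}/a>0$ while $g_a(\sigma)<0$ for a suitable $\sigma\in(-1,0)$, which follows from the explicit two-sided bounds on $\lambda_a$ quoted from \cite[Eq. (8), (9)]{SPA}. Next, every zero other than $-\lambda_a$ satisfies $\re s\le-\lambda_a-1-\varepsilon_a$ for some $\varepsilon_a>0$: for $a$ large, $g_a(s)$ is well approximated in any fixed strip by $s+\tfrac{e^{-\gamma}}{a}(1+a)^{-s}$, whose zeros are readily controlled, while the correction term in \eqref{eqgaseval} is small there, and a compactness argument handles $a$ in compact subsets of $(1,\infty)$; the gap $\lambda_a+1$ reflects that $\Omega_a$ is supported on $z\ge\log(1+a)$. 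The contour shift of the first paragraph, now to $\re s=-\lambda_a-1-\varepsilon_a$, yields $F_a(z)=C_ae^{-\lambda_a z}\bigl(1+O_a(e^{-(1+\varepsilon_a)z})\bigr)$, which is \eqref{rhoasymp1}.

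For part (iii), with $a>0$ general, it is no longer clear that $-\lambda_a$ is a simple pole or even the unique pole of maximal real part, so we only aim at $F_a(z)\asymp_a e^{-\lambda_a z}$. The key input is positivity: since $F_a\ge0$, Landau's theorem forces the abscissa of convergence of $\int_0^\infty F_a(z)e^{-sz}\,dz$ to equal $-\lambda_a$, which fixes the exponential scale; similarly, from $\widehat{|F_a'|}(s)=1-s\widehat F_a(s)=\widehat\Omega_a(s)/(a+\widehat\Omega_a(s))$ one obtains $\int_0^\infty|F_a'(w)|e^{\lambda_a w}\,dw=\infty$. To convert these into two-sided pointwise bounds one combines them with the monotonicity of $F_a$ and with the fact, read off from \eqref{FaDef}, that $F_a$ varies slowly on unit intervals and $F_a(z)\to0$; this step builds on the groundwork for $\rho_a$ in \cite{Sar}, and yields \eqref{rhoasymp2}. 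The main obstacle throughout is the localization of the zeros of $g_a$: in part (i) the zero-free rectangle must be certified by a rigorous numerical computation built on \eqref{eqgaseval}, and in part (ii) the same information is required uniformly over the noncompact range $a>1$, where the interplay of the $s$-term with the oscillatory term $\tfrac{e^{-\gamma}}{a}(1+a)^{-s}$ must be dissected near the real axis; once a zero-free strip is in hand the inversion is routine, and in part (iii) the delicate step is instead the soft-analysis lower bound just described.
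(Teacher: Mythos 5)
Your part (i) is essentially the paper's argument (contour shift in the Laplace inversion, cf.\ Lemma \ref{FaInv}, plus a rigorous numerical certification of the zero-free rectangle via \eqref{eqgaseval}), and that sketch is fine. Part (ii), however, has a gap at its crux: the statement that for \emph{every} fixed $a>1$ the only zero of $g_a$ with $\re(s)\ge-\lambda_a-1-\varepsilon_a$ is the simple real zero at $-\lambda_a$. Your route --- approximate $g_a$ by $s+\tfrac{e^{-\gamma}}{a}(1+a)^{-s}$ for $a$ large and ``handle $a$ in compact subsets of $(1,\infty)$ by compactness'' --- does not prove this: since the constants are allowed to depend on $a$, uniformity is not the issue; the issue is producing, for each individual $a>1$, an actual proof of the width-$(1+\varepsilon_a)$ zero-free strip, and compactness can only upgrade an already-known fixed-$a$ statement to uniformity, not create it. The remark that ``$\Omega_a$ is supported on $z\ge\log(1+a)$'' is a heuristic, not an argument. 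The paper imports exactly this fact from \cite[Lemma 9]{SPA}; you neither cite it nor reconstruct its proof.

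The more serious gap is in part (iii). The soft inputs you invoke (Landau's theorem fixing the abscissa of convergence at $-\lambda_a$, divergence of $\int|F_a'|e^{\lambda_a w}\,dw$, monotonicity, and an unsubstantiated ``$F_a$ varies slowly on unit intervals'') do not imply $F_a(z)\asymp e^{-\lambda_a z}$. Indeed, take $G(z)=F(z)e^{\lambda z}$: monotonicity of $F$ only forces $G'\le\lambda G$, so a nonnegative decreasing $F$ may have $G$ sit at $1$ on long stretches, drop abruptly at sparse points to depths tending to $0$, and recover at rate $e^{\lambda z}$; such an $F$ is decreasing, bounded, has abscissa of convergence exactly $-\lambda$, yet $\liminf F(z)e^{\lambda z}=0$. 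So the lower bound in \eqref{rhoasymp2} cannot follow from these facts alone, and the ``slow variation on unit intervals'' you would need --- a doubling bound at the exact scale $e^{-\lambda_a}$, or equivalently positivity of the liminf of the almost-periodic factor --- is essentially the statement to be proved. The paper's proof supplies two ingredients your proposal lacks: (a) simplicity of \emph{all} poles on the critical line $\re(s)=-\lambda_a$, obtained from the identity $\tfrac{\partial}{\partial a}g_a(s)=\tfrac{s}{a}g_a(s+1)-\tfrac{s+1}{a}g_a(s)$ (Corollary \ref{cordgda}, via the integral representation of Lemma \ref{lemgJ}; see Corollary \ref{corsimplepoles}), which permits the finite residue expansion $F_a(z)=e^{-\lambda z}\bigl(\alpha_0+\sum_j(\alpha_j\cos\tau_jz+\beta_j\sin\tau_jz)+o(1)\bigr)$ from Lemma \ref{FaInv}; and (b) a quantitative use of the monotonicity of $\rho_a$: if the trigonometric polynomial had liminf $0$, then since it exceeds $\alpha_0/2$ in every window of bounded length, $F_a$ would have to increase somewhere, a contradiction. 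Without (a) and (b), or a genuine substitute, your part (iii) does not go through.
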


\begin{proof}
Part (i) is a consequence of Lemma \ref{FaInv} and the calculations shown in Table \ref{table1}.
Part (ii) is Lemma \ref{lemag1}.
Part (iii) is Lemma \ref{lemag0}.
\end{proof}

\begin{table}[h]
  \begin{tabular}{ | l | l | l | l | l |}
   \hline
    $a$ & $\lambda_a$ & $C_a$ & $\mu_a$ & complex poles \\ \hline
    1 & 1  & $\frac{1}{1-e^{-\gamma}}$ & 3.03 & $-3.03... \pm 11.36... i$ \\ \hline
1/2 & 2.46206... & 3.7815...  & 4.65 & $-4.65... \pm 18.71... i$ \\ \hline
1/3 & 4.20605... & 5.7645... & 6.50  & $-6.50... \pm 25.73... i$ \\ \hline
1/4 & 6.15900... & 8.3827... & 8.52   & $-8.53\ \ \pm 32.59... i$\\ \hline
1/5 & 8.27925... & 11.812... & 10.7  & $-10.7... \pm 39.3... i$ \\ \hline
1/6 & 10.5395... & 16.265... & 13.0 & $\approx -13 \pm 46 i$ \\ \hline
1/7 & 12.9203... &  22.000... & 15.3  & $\approx -15 \pm 52 i$ \\ \hline
1/8 & 15.4074... &  29.333... & 17.8  & $\approx -18 \pm 59 i$ \\ \hline
1/9 & 17.9892... &  38.648... &  20.2  & $\approx -20 \pm 66 i$ \\ \hline
1/10 & 20.6568... & 50.410... &  22.7  & $\approx -23\pm 72i$ \\ \hline
    \end{tabular}
  \caption{Values of $\lambda_a$,  $C_a$, $\mu_a$, and the approximate location of the pair of complex poles responsible for the oscillating secondary term.}
  \label{table1}
  \end{table}

The values of $\mu_a$ shown in Table \ref{table1}  are obtained with equation 
\eqref{gasomegaprime}, with the help of Mathematica and exact formulas for $\omega'(u)$ for $u\le 5$.
For $5<u\le 10.5533$ we replace $\omega'(u)$ by $(\omega(u-1)-\omega(u))/u$, 
and use a table of zeros and local extrema of $\omega(u)-e^{-\gamma}$ in \cite{CherGold}.
For $u>10.5533$ we use $|\omega'(u)|\le 1/\Gamma(u+1)$ from Lemma \ref{omegaest}.
It seems likely that $\mu_a = \lambda_a+2.03$ is admissible for all $0<a\le 1$. 

We estimate the residues $C_{a}$ by numerical evaluation of $\frac{1}{2\pi i}\int_{\Gamma_a} \widehat{F}_a(s)  ds$,
where $\Gamma_a$ is a closed contour around $-\lambda_{a}$.

\begin{lemma}\label{ga}
Let $a>0$.
We write $s=\sigma + i \tau$ and
$$ H_a(\sigma):=  \frac{1}{a (1+a)^\sigma} 
+\frac{1}{a} \int_1^\infty  |\omega'(t)| \frac{dt}{(1+at)^{\sigma}} .$$
If $g_a(s)=0$ then $|\tau| \le |s| \le H_a(\sigma)$. 
For $|s| \ge 2 H_a(\sigma)$, we have
$$ \frac{1}{g_a(s)}=\frac{1}{s} + O\left(\frac{H_a(\sigma)}{\sigma^2+\tau^2 } \right) .$$
\end{lemma}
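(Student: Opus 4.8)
The plan is to work directly from the formula
$$
g_a(s)=s+s\int_0^\infty \omega(u)\,\frac{du}{(1+au)^{s+1}}
$$
and integrate by parts once in $u$ to move the derivative onto $\omega$. First I would record that $\frac{d}{du}(1+au)^{-s}=-sa(1+au)^{-s-1}$, so that
$$
s\int_0^\infty \omega(u)\,\frac{du}{(1+au)^{s+1}}
=\frac{1}{a}\int_0^\infty \omega(u)\,\Bigl(-\frac{d}{du}(1+au)^{-s}\Bigr)\,du
=\frac{\omega(0^+)}{a}+\frac1a\int_0^\infty \omega'(u)\,(1+au)^{-s}\,du,
$$
where the boundary term at $\infty$ vanishes for $\re(s)>0$ because $\omega$ is bounded, and the boundary term at $0$ contributes $\omega(0^+)/a$; since $\omega(u)$ is supported on $u\ge 0$ with $\omega(u)=0$ on $(0,1)$ and a jump to $1$ at $u=1$ (I will need to be careful here: in fact $\omega(u)=1/u$ on $[1,2]$, so $\omega$ vanishes on $(0,1)$ and $\omega(0^+)=0$, and the jump of $\omega$ at $u=1$ must be picked up as part of $\int \omega'$). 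Cleanly, splitting at $u=1$ and using $\omega\equiv 0$ on $(0,1)$ together with the jump $\omega(1^+)-\omega(1^-)=1$ gives
$$
g_a(s)=s+\frac{1}{a(1+a)^s}+\frac1a\int_1^\infty \omega'(u)\,\frac{du}{(1+au)^{s}},
$$
which matches \eqref{gasomegaprime}/\eqref{eqgaseval} in the paper (here $\omega'$ denotes the derivative on $(1,\infty)$, which exists piecewise and is integrable).

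From this representation the bound $|s|\le H_a(\sigma)$ when $g_a(s)=0$ is immediate: if $g_a(s)=0$ then
$$
s=-\frac{1}{a(1+a)^s}-\frac1a\int_1^\infty \omega'(u)\,\frac{du}{(1+au)^{s}},
$$
and taking absolute values, using $|(1+au)^{-s}|=(1+au)^{-\sigma}$ since $1+au>0$, gives
$$
|s|\le \frac{1}{a(1+a)^{\sigma}}+\frac1a\int_1^\infty |\omega'(u)|\,\frac{du}{(1+au)^{\sigma}}=H_a(\sigma).
$$
The inequality $|\tau|\le|s|$ is trivial. For the second assertion, I would write
$$
\frac{1}{g_a(s)}=\frac{1}{s}\cdot\frac{1}{1+R(s)/s},\qquad R(s):=g_a(s)-s,
$$
observe $|R(s)|\le H_a(\sigma)$ by the same triangle-inequality estimate applied to the two explicit terms of $g_a(s)-s$, and note that the hypothesis $|s|\ge 2H_a(\sigma)$ forces $|R(s)/s|\le 1/2$. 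Then the elementary expansion $\frac{1}{1+\zeta}=1+O(|\zeta|)$ valid for $|\zeta|\le 1/2$ yields
$$
\frac{1}{g_a(s)}=\frac1s\Bigl(1+O\bigl(|R(s)/s|\bigr)\Bigr)=\frac1s+O\!\left(\frac{|R(s)|}{|s|^2}\right)=\frac1s+O\!\left(\frac{H_a(\sigma)}{\sigma^2+\tau^2}\right),
$$
as claimed, since $|s|^2=\sigma^2+\tau^2$.

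The only genuinely delicate point is the integration by parts: I must justify interchanging the boundary evaluation with the (only piecewise-smooth) behaviour of $\omega$, in particular handling the jump discontinuity of $\omega$ at $u=1$ and the kink at $u=2$, and I must confirm that $\int_1^\infty|\omega'(u)|\,(1+au)^{-\sigma}\,du$ converges for all real $\sigma$ (this uses the super-exponential decay $|\omega'(u)|\le 1/\Gamma(u+1)$ cited from \cite[Thm.\ III.5.7]{Ten}, so $H_a(\sigma)<\infty$ for every $\sigma$, including $\sigma\le 0$). Everything else is triangle inequality plus the Neumann-series estimate $1/(1+\zeta)=1+O(|\zeta|)$ for $|\zeta|\le 1/2$; no further input is needed.
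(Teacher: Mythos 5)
Your proposal is correct and follows essentially the same route as the paper: integrate by parts to reach the representation \eqref{gasomegaprime}, bound the non-linear part by $H_a(\sigma)$ via the triangle inequality (the paper packages this as $g_a(s)=s+H_a(\sigma)\xi_a(s)$ with $|\xi_a(s)|\le 1$), and then read off both the zero localization and the expansion $1/g_a(s)=1/s+O(H_a(\sigma)/|s|^2)$ from $|R(s)/s|\le 1/2$. The only cosmetic difference is that you integrate by parts in the original formula valid for $\re(s)>0$ (picking up the jump of $\omega$ at $u=1$ as the boundary term), so you should add the one-line remark that both sides of \eqref{gasomegaprime} are entire, whence the identity persists for all $s$ by analytic continuation, whereas the paper works from \eqref{eqgaseval}, where the same integration by parts is valid for every $s$ directly.
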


\begin{proof}
Integration by parts shows that
\begin{equation}\label{gasomegaprime}
g_a(s) = s + \frac{1}{a (1+a)^s} 
+\frac{1}{a} \int_1^\infty  \omega'(t) \frac{dt}{(1+at)^{s}} 
 = s + H_a(\sigma) \xi_a(s),
\end{equation}
where $\xi_a(s) \in \mathbb{C}$ with  $|\xi_a(s)|\le 1$.
Thus any zero of $g_a(s)$ must satisfy $|\tau|\le |s| \le H_a(\sigma)$. 
We have
$$ g_a(s) = s \left(1+\frac{H_a(\sigma) \xi_a(s)}{s}\right),$$
from which the estimate for $1/g_a(s)$ follows.
\end{proof}

\begin{lemma}\label{FaInv}
Let $a>0$ and $\mu \ge 0$ be fixed. Let $F_a(z)$ be given by \eqref{FaDef} and 
let $P_\mu$ denote the finite set of poles of $\widehat{F}_a(s)$ with $\re(s)\ge -\mu$. 
Then 
$$
F_a(z) = \sum_{s_k \in P_\mu} \res(\widehat{F}_a(s) e^{zs}; s_k)+ O_{\mu,a}(e^{-\mu z}) \qquad (z\ge 0).
$$
\end{lemma}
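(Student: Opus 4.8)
The plan is to recover $F_a(z)$ from its Laplace transform $\widehat F_a(s)$ by Mellin--Laplace inversion, shifting the contour of integration to the left past the poles in $P_\mu$ and picking up their residues. First I would fix $c>0$ (say $c=1$) and write, for $z>0$,
$$
F_a(z) = \frac{1}{2\pi i} \int_{c-i\infty}^{c+i\infty} \widehat F_a(s)\, e^{zs}\, ds,
$$
which is justified because, by \eqref{FaLap} and Lemma \ref{ga}, $\widehat F_a(s) = \frac 1s + O(H_a(\sigma)/|s|^2)$ for $|s|\ge 2H_a(\sigma)$, so on the line $\re s = c$ the transform is $O(1/|s|)$ with an integrable correction of size $O(1/|s|^2)$; combined with the fact that $F_a$ is continuous and of at most polynomial growth (indeed bounded, since $0\le \rho_a(u)\le 1$ by Theorem \ref{rhoathm}), the inversion formula holds. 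Since $g_a(s)$ is entire (see \eqref{eqgaseval}), the only singularities of $\widehat F_a(s) = 1/\bigl(s\,(1+\tfrac1a\widehat\Omega_a(s))\bigr) = 1/g_a(s)$ are the zeros of $g_a$, a discrete set; by Lemma \ref{ga} every zero satisfies $|s|\le H_a(\sigma)$, and since $H_a(\sigma)$ is bounded on any vertical strip $-\mu\le\sigma\le c$, there are only finitely many zeros with $\re s\ge -\mu$, i.e. $P_\mu$ is finite.

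Next I would move the contour from $\re s = c$ to $\re s = -\mu$. Choose a sequence of heights $T_n\to\infty$ (avoiding the imaginary parts of the finitely many poles) and apply the residue theorem to the rectangle with vertical sides $\re s = c$ and $\re s = -\mu$ and horizontal sides $\im s = \pm T_n$. The sum of the residues inside is exactly $\sum_{s_k\in P_\mu}\res(\widehat F_a(s)e^{zs};s_k)$. It remains to bound the three new pieces of the boundary. On the horizontal segments $\im s = \pm T_n$, $-\mu\le\sigma\le c$, we have $|e^{zs}|\le e^{cz}$ and, by Lemma \ref{ga} (valid once $T_n$ exceeds $2\sup_{-\mu\le\sigma\le c}H_a(\sigma)$, a fixed constant), $|\widehat F_a(s)| = O(1/T_n)$; so these contributions are $O_{\mu,a}(e^{cz}/T_n)\to 0$ as $n\to\infty$. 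On the left edge $\re s = -\mu$, we have $|e^{zs}| = e^{-\mu z}$ and $|\widehat F_a(-\mu+i\tau)| = O(1/(1+|\tau|))$ for $|\tau|$ large by Lemma \ref{ga}, while on the bounded part $|\tau|\le 2H_a(-\mu)$ the function $\widehat F_a$ is continuous hence bounded (no pole lies on $\re s = -\mu$ provided $\mu$ is chosen so that $g_a$ has no zero there; if $g_a$ does vanish on that line, replace $\mu$ by $\mu+\epsilon$ for a suitable small $\epsilon$, which only enlarges the error term). Thus the left edge contributes $\ll e^{-\mu z}\int_{-\infty}^{\infty} |\widehat F_a(-\mu+i\tau)|\,d\tau = O_{\mu,a}(e^{-\mu z})$. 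Collecting the pieces and letting $n\to\infty$ gives
$$
F_a(z) = \sum_{s_k\in P_\mu}\res(\widehat F_a(s)e^{zs};s_k) + O_{\mu,a}(e^{-\mu z}),
$$
first for $z>0$, and then for $z=0$ by continuity (both sides are continuous at $0$).

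The main obstacle is the justification of the contour shift, specifically controlling $\widehat F_a(s)$ along tall horizontal segments: one needs the decay $\widehat F_a(s) = O(1/|s|)$ to hold uniformly for $\re s$ in the whole strip $[-\mu, c]$, not just for large $\re s$, and this is precisely what Lemma \ref{ga} delivers via the bound $|s|\le H_a(\sigma)$ on zeros together with the estimate $1/g_a(s) = 1/s + O(H_a(\sigma)/|s|^2)$ once $|s|\ge 2H_a(\sigma)$. The secondary technical point is the integrability of $\widehat F_a$ on the vertical line $\re s = -\mu$, which again follows from the same lemma since $H_a(-\mu)$ is finite. A minor care point is handling poles (if any) exactly on the line $\re s = -\mu$; the cleanest fix, as noted, is to absorb them by a harmless $\epsilon$-perturbation of $\mu$, since the statement only asserts an $O(e^{-\mu z})$ error.
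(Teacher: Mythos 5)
Your overall architecture is the same as the paper's: Laplace inversion on a vertical line, the residue theorem on a rectangle reaching $\re s=-\mu$, Lemma \ref{ga} to guarantee finiteness of $P_\mu$ and decay of $\widehat F_a$, and an $\varepsilon$-shift of the left edge if $g_a$ happens to vanish on $\re s=-\mu$. The only substantive difference in bookkeeping is that you send the heights $T_n\to\infty$, whereas the paper truncates at the $z$-dependent height $T=e^{z(\mu+1)}$; that is harmless.

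However, there is a genuine gap in your estimate of the left edge. Lemma \ref{ga} gives $\widehat F_a(-\mu+i\tau)=\frac{1}{-\mu+i\tau}+O\bigl(H_a(-\mu)/\tau^2\bigr)$, so $|\widehat F_a(-\mu+i\tau)|\asymp 1/|\tau|$ for large $|\tau|$, and the integral $\int_{-\infty}^{\infty}|\widehat F_a(-\mu+i\tau)|\,d\tau$ which you invoke \emph{diverges} (logarithmically). Bounding the left edge by absolute values therefore does not give $O_{\mu,a}(e^{-\mu z})$: with your $T_n\to\infty$ the bound is infinite, and even with a truncation at height $T$ it produces an extra factor $\log T$, i.e.\ $z\,e^{-\mu z}$ if $T$ is chosen exponentially in $z$, which is weaker than the stated error term. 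The missing ingredient is oscillation: as in the paper, split $\widehat F_a(s)=\frac1s+O(\tau^{-2})$ on the portion $2H_a(-\mu)\le|\tau|\le T$ of the left edge, absorb the $O(\tau^{-2})$ part by absolute convergence, and estimate $\int \frac{e^{zs}}{s}\,ds$ there by integration by parts in $\tau$ (using the factor $e^{iz\tau}$), which yields $O(e^{-\mu z})$; the bounded piece $|\tau|\le 2H_a(-\mu)$ is handled as you do, by boundedness of $\widehat F_a$ there. With that correction (and a matching treatment of the tails of the original line $\re s=1$, which also converge only conditionally and are bounded the same way), your argument closes and coincides with the paper's proof.
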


In the following proofs, the constants implied by the symbols ``$\ll$" and ``$O$" may depend on $a$ and $\mu$.  

\begin{proof}
Since $0\le F_a(z) \le 1$,  the set of poles $P_\mu$ is contained in the half-plane $\re(s)\le 0$. 
Lemma \ref{ga} implies that $P_\mu$ is contained in a bounded region and is therefore finite. 
We evaluate the inverse Laplace integral 
$$ F_a(z) = \frac{1}{2\pi i} \int_{1-i\infty}^{1+i\infty} \widehat{F}_a(s) e^{zs}\, d s.$$
Let $T=\exp(z(\mu+1))$.
Since the result is trivial for bounded $z$, we may assume that $z$ is sufficiently large such that $T>2H_a(-\mu)$. We have
$$ \int_{1+iT}^{1+i\infty}  \widehat{F}_a(s) e^{zs}\, d s 
= \int_{1+iT}^{1+i\infty} \frac{1}{s} e^{zs}\, d s + O\left(e^{ z}/T \right)=O\left(e^{-\mu z}\right),$$
by Lemma \ref{ga} and integration by parts applied to the last integral.
We first  assume that $\widehat{F}_a(s)$ has no poles with $\re(s) =-\mu$. 
We apply the residue theorem to the rectangle with vertices $-\mu \pm iT$, $1\pm iT$. 
The contribution from the horizontal segments can be estimated by Lemma \ref{ga} as 
$$ \int_{-\mu+iT}^{1+iT} \widehat{F}_a(s) e^{zs}\,d s \ll \frac{e^{ z}}{T}  =O\left(e^{-\mu z}\right).$$
For the vertical segment with $\re s = -\mu$ we have
$$ \left| \int_{-\mu-i2H_a(-\mu)}^{-\mu+i2H_a(-\mu)} \widehat{F}_a(s) e^{zs}\,d s \right| \le 4H_a(-\mu) \max_{|\tau|\le 2H_a(-\mu)} \left|\widehat{F}_a(-\mu+i\tau)\right| e^{-\mu z} = O\left(e^{-\mu z}\right)$$
and
$$ \int_{-\mu+i2H_a(-\mu)}^{-\mu+iT} \widehat{F}_a(s) e^{zs}\, d s = \int_{-\mu+i2H_a(-\mu)}^{-\mu+iT} \left(\frac{1}{s}+O(\tau^{-2})\right) e^{zs}\,d s =O\left(e^{-\mu z}\right),$$
where the contribution from $\frac{1}{s}$ to the last integral can be estimated with integration by parts. The residue theorem now yields the result.
If $\widehat{F}_a(s)$ has a pole with $\re(s) =-\mu$, we use the modified rectangle with vertices  $-\mu -\varepsilon \pm iT$, $1 \pm iT$, 
for a suitable $\varepsilon >0$. 
\end{proof}

\begin{lemma}\label{lemag1}
Let $a>1$ and let $-\lambda_a$ denote the maximal real part of the set of poles of  $\widehat{F}_a(s)$.
Then $\widehat{F}_a(s)$ has a simple pole at the real number $s=-\lambda_a$, $\lambda_a \in (0,1)$, and 
 $\widehat{F}_a(s)$  has no other poles in the half-plane $\re(s) \ge -1 -\lambda_a -\varepsilon_a$, for some $\varepsilon_a>0$. 
Thus 
$$F_a(z) = C_a e^{-\lambda_a z}(1+O_a(e^{-z(1+\varepsilon_a)})) \qquad (z\ge 0).$$ 
\end{lemma}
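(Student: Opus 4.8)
The plan is to study the poles of $\widehat F_a(s)=1/g_a(s)$, where $g_a$ is the entire function \eqref{eqgaseval} and $\widehat F_a$ is given by \eqref{FaLap}: I would show that in a half-plane $\re(s)\ge -\mu$ with $\mu>\lambda_a+1$ the only pole is a simple one at a real point $-\lambda_a\in(-1,0)$, and then read off the stated asymptotics of $F_a$ from Lemma~\ref{FaInv}. For the location of the rightmost pole I would use positivity. Since $F_a(z)=\rho_a((e^z-1)/a)\ge\rho((e^z-1)/a)\ge 0$ and $F_a(z)\le 1$, the Laplace integral $\widehat F_a(s)=\int_0^\infty F_a(z)e^{-sz}\,dz$ converges and is analytic for $\re(s)>0$, where it agrees with \eqref{FaLap}. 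By Landau's theorem the abscissa of convergence $-\lambda_a\le 0$ of this integral is a singularity of $\widehat F_a$, hence a zero of $g_a$; and because $\widehat F_a$ is analytic for $\re(s)>-\lambda_a$, this is the zero of $g_a$ of maximal real part, so $\lambda_a$ coincides with the quantity named in the statement. From \eqref{eqgaseval}, $g_a(0)=e^{-\gamma}/a>0$, so $\lambda_a>0$; and since the integral in \eqref{eqgaseval} is independent of $a$ at $s=-1$, one gets $g_a(-1)=g_1(-1)+e^{-\gamma}(1-a)/a=e^{-\gamma}(1-a)/a<0$ for $a>1$, using $\lambda_1=1$ (equivalently $g_1(-1)=0$). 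As $g_a$ is continuous and $-\lambda_a$ is its rightmost zero, the sign change of $g_a$ on $(-1,0)$ forces $-\lambda_a\in(-1,0)$, i.e.\ $\lambda_a\in(0,1)$.

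Next I would show that the pole at $-\lambda_a$ is simple. Since $\widehat F_a$ is strictly decreasing on $(-\lambda_a,\infty)$ (positive integrand) and tends to $+\infty$ as $s\downarrow-\lambda_a$, the function $g_a=1/\widehat F_a$ is strictly increasing on $(-\lambda_a,\infty)$, so $g_a'(-\lambda_a)\ge 0$; it remains to exclude equality. I would differentiate \eqref{eqgaseval} and bound the only sizable negative contribution to $g_a'(-\lambda_a)$, namely $-\,e^{-\gamma}a^{-1}\log(1+a)\,(1+a)^{\lambda_a}$, against $1$ plus the (small) integral terms, using $\lambda_a<1$ together with the bound $\lambda_a\ll 1/a$ valid for $a\ge 1$ (from \cite[Eq. (8), (9)]{SPA}); this gives $g_a'(-\lambda_a)>0$, so the pole is simple with residue $C_a=1/g_a'(-\lambda_a)>0$. (Alternatively, simplicity also falls out of the argument-principle count described next.)

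The heart of the proof is showing that $\widehat F_a$ has no pole other than $-\lambda_a$ with $\re(s)\ge-\lambda_a-1-\varepsilon_a$. By Lemma~\ref{ga}, every zero of $g_a$ with $\re(s)\ge-\lambda_a-1-\varepsilon_a$ lies in the bounded rectangle $R=[-\lambda_a-1-\varepsilon_a,\,0]\times[-T_a,T_a]$ with $T_a=H_a(-\lambda_a-1-\varepsilon_a)$ (using that $H_a$ is decreasing and that poles of $\widehat F_a$ have $\re(s)\le 0$); since $g_a$ is entire, only finitely many zeros lie in $R$. I would then show that, for a suitable small $\varepsilon_a>0$, the only one is $-\lambda_a$, concretely by computing via the argument principle along $\partial R$ that $g_a$ has exactly one zero in $R$ counted with multiplicity: on the right edge $g_a\ne 0$ by the positivity above, and on the remaining edges one uses the expansion \eqref{gasomegaprime}, $g_a(s)=s+H_a(\sigma)\xi_a(s)$ with $|\xi_a(s)|\le 1$ and $\sigma=\re(s)$, together with the sharper decomposition \eqref{eqgaseval}. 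I expect this uniform-in-$a$ elimination to be the main obstacle. It is also exactly where $a>1$ enters: because $\lambda_a<1$, on the strip in question $-\re(s)-1\le\lambda_a+\varepsilon_a<1$, which keeps $\int_1^\infty|\omega(u)-e^{-\gamma}|(1+au)^{-\re(s)-1}\,du$ controlled and lets the estimates close uniformly; for $a\le 1$ one has $\lambda_a\ge 1$ and the corresponding pole-free region must instead be certified numerically, as in Table~\ref{table1} and part (i) of Theorem~\ref{thmFara}.

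Finally I would invoke Lemma~\ref{FaInv} with $\mu=\lambda_a+1+\varepsilon_a$: the only pole of $\widehat F_a(s)$ with $\re(s)\ge-\mu$ is the simple pole at $-\lambda_a$, and $\res(\widehat F_a(s)e^{zs};-\lambda_a)=C_ae^{-\lambda_a z}$. Hence $F_a(z)=C_ae^{-\lambda_a z}+O_a(e^{-\mu z})=C_ae^{-\lambda_a z}\bigl(1+O_a(e^{-z(1+\varepsilon_a)})\bigr)$ for $z\ge 0$, which is the assertion.
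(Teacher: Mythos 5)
Your overall framework is the same as the paper's final step (poles of $\widehat F_a=1/g_a$ fed into Lemma \ref{FaInv}), and several of your preliminary observations are correct and nicely elementary: Landau's theorem does force the rightmost singularity of the Laplace transform of the nonnegative function $F_a$ to be real; $g_a(0)=e^{-\gamma}/a>0$; and since at $s=-1$ the integral in \eqref{eqgaseval} is independent of $a$, the identity $g_a(-1)=g_1(-1)+e^{-\gamma}(1-a)/a$ together with $g_1(-1)=0$ (i.e.\ $\lambda_1=1$) gives $g_a(-1)<0$ for $a>1$, hence a real zero in $(-1,0)$ and $\lambda_a\in(0,1)$.

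The genuine gap is at the heart of the lemma: you never establish that $g_a$ has \emph{no other} zeros, in particular no complex zeros, with $\re(s)\ge-1-\lambda_a-\varepsilon_a$. Lemma \ref{ga} only confines possible zeros to a bounded rectangle; it does not exclude them, and your plan to ``compute via the argument principle along $\partial R$'' is left entirely as a plan — you supply no estimates on the left and horizontal edges, and you yourself flag this as ``the main obstacle.'' It is also not a routine Rouch\'e comparison: the natural comparison function $s+e^{-\gamma}a^{-1}(1+a)^{-s}$ from \eqref{gasomegaprime}/\eqref{eqgaseval} can have additional zeros of its own inside such a rectangle when $a$ is close to $1$, so the count does not close without real work. (Note the difficulty is not uniformity in $a$: the lemma is for fixed $a>1$ and $\varepsilon_a$, implied constants may depend on $a$; the issue is the exclusion itself.) The paper does not prove this zero-free region from scratch either — it quotes \cite[Lemma 9]{SPA} for exactly this statement about the location and nature of the poles, and then applies Lemma \ref{FaInv} as you do. Likewise your simplicity argument is only sketched (the claimed inequality $g_a'(-\lambda_a)>0$ is asserted, not verified); within this paper simplicity is available without extra work from Corollary \ref{corsimplepoles}, whose proof is independent of this lemma, or it comes bundled with the citation. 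As it stands, the proposal establishes $\lambda_a\in(0,1)$ and the reduction to Lemma \ref{FaInv}, but not the pole-free half-plane that the asymptotic formula requires.
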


\begin{proof}
The statement about the location and nature of the poles of $\widehat{F}_a(s)$ follows from \cite[Lemma 9]{SPA}.
The asymptotic estimate for $F_a(z)$ follows from Lemma \ref{FaInv}.
\end{proof}

\begin{lemma}\label{lemag0}
Let $a>0$ and let $-\lambda_a$ denote the maximal real part of the set of poles of  $\widehat{F}_a(s)$.
Then $\widehat{F}_a(s)$ has a simple pole at the real number $s=-\lambda_a$ and $$F_a(z) \asymp_a e^{-\lambda_a z} \qquad (z\ge 0).$$
\end{lemma}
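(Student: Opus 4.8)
The plan is to combine Landau's theorem on Laplace transforms of non-negative functions with the residue calculus of Lemma~\ref{FaInv}. Throughout write $\sigma=\re(s)$. First I would record the elementary input about $F_a$: by the first part of Theorem~\ref{rhoathm} we have $0<\rho(u)\le\rho_a(u)\le 1$, hence $0<F_a(z)\le 1$, and $F_a$ is decreasing. Consequently the Laplace integral $\widehat{F}_a(s)=\int_0^\infty F_a(z)e^{-sz}\,dz$ converges and is holomorphic in the half-plane $\re(s)>\sigma_c$, where $\sigma_c\le 0$ is its abscissa of convergence. By \eqref{FaLap} and \eqref{eqgaseval}, $\widehat{F}_a=1/g_a$ with $g_a$ entire, so $\widehat{F}_a$ continues meromorphically to $\mathbb{C}$ with poles precisely at the zeros of $g_a$; in particular every zero of $g_a$ has $\sigma\le\sigma_c$. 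Landau's theorem now says $s=\sigma_c$ is a singular point of $\widehat{F}_a$, hence a zero of $g_a$; and since $g_a(0)=e^{-\gamma}/a\neq 0$ by \eqref{eqgaseval}, we get $\sigma_c<0$. Thus $\sigma_c=-\lambda_a$ with $\lambda_a>0$, and this is the pole of maximal real part, as in the statement.

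The heart of the matter is to show that the pole at $s=-\lambda_a$ is simple and that $\widehat{F}_a$ has no other pole with real part equal to $-\lambda_a$. For simplicity I would work on the real axis: $g_a(\sigma)=1/\widehat{F}_a(\sigma)>0$ for every real $\sigma>-\lambda_a$ while $g_a(-\lambda_a)=0$, so it suffices to prove $g_a'(-\lambda_a)>0$. This I would extract from the explicit representation \eqref{gasomegaprime}, differentiating under the integral sign and using $g_a(-\lambda_a)=0$ to eliminate the head term $\tfrac{1}{a(1+a)^s}$, which reduces the claim to an inequality involving $1-\lambda_a\log(1+a)$ and $\tfrac1a\int_1^\infty|\omega'(t)|(1+at)^{\lambda_a}\,dt$; for $a$ in a compact range this is a finite check (consistent with the positive values of $C_a=1/g_a'(-\lambda_a)$ in Table~\ref{table1}), while for $a\to0^+$ and $a\to\infty$ one uses the asymptotics of $\lambda_a$ from Theorem~\ref{thmlambda} and \cite{SPA} together with the rapid decay of $|\omega'(t)|$ from \cite[Thm. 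III.5.7]{Ten}. To exclude a zero $s=-\lambda_a+i\tau$ with $\tau\neq0$, I would compare $\re g_a(-\lambda_a+i\tau)$ with $\re g_a(-\lambda_a)=0$: in the convergent representation \eqref{eqgaseval} the $\tau$-dependence enters only through factors $(1+a)^{-i\tau}$ and $(1+au)^{-i\tau}$ of modulus $1$, with $\cos(\tau\log(1+au))<1$ on a set of positive measure, so that the genuine head term $\tfrac{e^{-\gamma}}{a}(1+a)^{\lambda_a}$, which strictly dominates the small correction $s\int_1^\infty(\omega(u)-e^{-\gamma})(1+au)^{-s-1}\,du$, forces $\re g_a(-\lambda_a+i\tau)>0$, a contradiction.

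Keeping that correction integral under control uniformly in $a$ --- in particular for small $a$, where $\lambda_a$ is large and the weights $(1+au)^{\lambda_a}$ are heavy --- is the main technical obstacle, and is precisely where the super-exponential decay of $\omega(u)-e^{-\gamma}$ must be used; it is also the point where, for small $a$, one may need to borrow the location of zeros from a numerical analysis as in Table~\ref{table1} (for $a>1$ this step is instead supplied by Lemma~\ref{lemag1}).

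Finally, granted that the pole at $-\lambda_a$ is simple and that there is no other pole on the line $\re(s)=-\lambda_a$, Lemma~\ref{ga} shows $g_a$ has only finitely many zeros in any vertical strip, so there is $\mu>\lambda_a$ for which $-\lambda_a$ is the unique pole of $\widehat{F}_a$ in $\re(s)\ge-\mu$. Lemma~\ref{FaInv} then yields
\[
 F_a(z)=\res\bigl(\widehat{F}_a(s)e^{zs};-\lambda_a\bigr)+O_a\bigl(e^{-\mu z}\bigr)=C_a e^{-\lambda_a z}+O_a\bigl(e^{-\mu z}\bigr)\qquad(z\ge0),
\]
with $C_a=1/g_a'(-\lambda_a)>0$ by the previous step. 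Since $F_a(z)>0$ this already forces $C_a\ge0$, and $C_a\neq0$ since otherwise $F_a(z)=O_a(e^{-\mu z})$ would push the abscissa of convergence of $\widehat{F}_a$ to at most $-\mu<-\lambda_a$, contradicting $\sigma_c=-\lambda_a$. Hence $F_a(z)=C_a e^{-\lambda_a z}(1+o(1))$ as $z\to\infty$, while on any bounded interval $[0,Z_0]$ the continuous positive function $F_a$ and the function $e^{-\lambda_a z}$ are each bounded above and below by positive constants; combining the two ranges gives $F_a(z)\asymp_a e^{-\lambda_a z}$ for all $z\ge0$, which is the assertion of the lemma (and, after the substitution $e^z=1+au$, equation \eqref{rhoasymp2}).
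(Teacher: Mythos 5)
There is a genuine gap, and it sits exactly at the point your proof leans on hardest: the claim that $g_a$ has no zero $-\lambda_a+i\tau$ with $\tau\neq 0$, together with the claim $g_a'(-\lambda_a)>0$. Your argument for the former is that in \eqref{eqgaseval} the ``head term'' $\frac{e^{-\gamma}}{a}(1+a)^{\lambda_a}$ strictly dominates the correction integral, so that $\re g_a(-\lambda_a+i\tau)>0$. But at $\tau=0$ the head term is \emph{exactly} cancelled by the other two terms (that is what $g_a(-\lambda_a)=0$ means), so it cannot dominate them in absolute value; moreover \eqref{eqgaseval} also contains the term $s$ itself, whose real part $-\lambda_a$ is large and negative when $a$ is small (recall $\lambda_a\sim\frac1a\log\frac1a$), and the correction integral carries the heavy weight $(1+au)^{\lambda_a-1}$ and a factor $|s|$ that grows with $|\tau|$. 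A remark that $\cos(\tau\log(1+au))<1$ on a set of positive measure gives no quantitative control of how the cancellation degrades as $\tau$ moves off $0$. Falling back on ``numerical analysis as in Table \ref{table1}'' cannot close this for all $a>0$ either: the table covers only $a=1/i$, $i\le 10$, and the lemma is claimed for every $a>0$. Tellingly, the paper itself does \emph{not} prove the absence of complex poles on the line $\re(s)=-\lambda_a$ for general $0<a<1$; it explicitly allows the (``unlikely'') possibility $J\ge 1$ and works around it. Similarly, your proof of simplicity ($g_a'(-\lambda_a)>0$ by a ``finite check'' on compact ranges of $a$ plus asymptotics at the endpoints) is a programme, not an argument, and it is not needed: the paper gets simplicity cleanly from the identity $\frac{\partial}{\partial a}g_a(s)=\frac{s}{a}g_a(s+1)-\frac{s+1}{a}g_a(s)$ (Corollaries \ref{cordgda} and \ref{corsimplepoles}), which forces $g_a'(s_0)\neq 0$ at any zero $s_0$ with $\re(s_0)>-\lambda_a-1$.

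Because your final asymptotic $F_a(z)=C_ae^{-\lambda_a z}+O_a(e^{-\mu z})$ with $\mu>\lambda_a$ rests on the unproved absence of poles on the line $\re(s)=-\lambda_a$, the conclusion $F_a(z)\asymp_a e^{-\lambda_a z}$ is not established by your route. The paper's actual mechanism is different and worth internalizing: apply Lemma \ref{FaInv} at level $\mu=\lambda_a$, allowing the possible complex poles $-\lambda_a\pm i\tau_j$, to get $F_a(z)=e^{-\lambda_a z}\bigl(o(1)+L(z)\bigr)$ with $L$ a real trigonometric polynomial with constant term $\alpha_0$; positivity of $\rho_a$ forces $\alpha_0>0$ and $\liminf L\ge 0$, and then the \emph{monotonicity} of $\rho_a(u)$ in $u$ (proved earlier in Section \ref{secrhoa}) is used, via an averaging argument showing $L$ exceeds $\alpha_0/2$ in every interval of bounded length, to rule out $\liminf L=0$. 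This yields the two-sided bound even if complex poles on the critical line exist. Your first paragraph (Landau's theorem identifying a real pole at the abscissa of convergence, meromorphy via $\widehat{F}_a=1/g_a$, and finiteness of poles in strips from Lemma \ref{ga}) is fine and essentially parallels the paper; the middle of your argument is where it does not go through.
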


\begin{proof}
Let $a>0$ be fixed and assume that $\widehat{F}_a(s)$ has poles with maximal real part $-\lambda$ at $s=-\lambda \pm i\tau_j $, $0\le j \le J$,
where $J\ge 0$ and $\tau_j \ge 0$. 
By Corollary \ref{corsimplepoles}, these poles are all simple. 
From Lemma \ref{FaInv} it follows that 
$$
F_a(z) = e^{-\lambda z} \left( o(1) + \alpha_0 + \sum_{j=1}^J(\alpha_j \cos \tau_j z + \beta_j \sin \tau_j z) \right) 
$$ 
for some real constants $\alpha_j$, $\beta_j$, $1\le j \le J$. We want to show that $F_a(z) \asymp_a e^{-\lambda z} $.
Since $F_a(z)=\rho_a(u)\ge 0$, where $e^z=1+au$, we must have $\alpha_0>0$. 
If there are no complex poles with real part $-\lambda$ (as is the case when $a\ge 1$ and for all $a$ shown in Table \ref{table1}, 
and probably also for all $a$ with $0<a<1$),
 then $F_a(z) \sim \alpha_0 e^{-\lambda z}$ as $z\to \infty$ and we are done. 
For the remainder we make the (unlikely) assumption that $J\ge 1$. 
Define $h:= \liminf_{z\to \infty} L(z)$, where
$$
L(z):=\alpha_0 + \sum_{j=1}^J(\alpha_j \cos \tau_j z + \beta_j \sin \tau_j z) .
$$
We want to show that $h>0$. 
If $h<0$ then clearly $F_a(z) <0$ for some $z$, which is impossible.
Assume that $h=0$, i.e.
there is a sequence $z_k$ with $\lim z_k = \infty$, such that $\lim L(z_k) = 0$.
Note that
$$
\int_{z_1}^{z_2} L(z) dz \ge \alpha_0 (z_2-z_1) -2 \sum_{j=1}^J \frac{|\alpha_j|+|\beta_j|}{\tau_j} \ge \frac{\alpha_0}{2}(z_2- z_1),
$$
provided 
$z_2-z_1\ge w$, where
$$w:= \frac{4}{\alpha_0} \sum_{j=1}^J \frac{|\alpha_j|+|\beta_j|}{\tau_j}.$$
It follows that, within any interval of length $w$,
$L(z)$ takes on a value greater than $\alpha_0/2$.
Since $\lim L(z_k) = 0$, we have $L(z_k) \le \alpha_0 e^{-\lambda w}/5$ for all large $k$. 
Also, $L(z_k+r_k) \ge  \alpha_0/2$ for suitable $r_k$ with $0\le r_k \le w$. Thus, for all large $k$,
$$
F_a(z_k+r_k)> e^{-\lambda (z_k+r_k)} (o(1) + \alpha_0/2) > e^{-\lambda (z_k+w)}\frac{\alpha_0}{3}
=  e^{-\lambda z_k} \frac{\alpha_0e^{-\lambda w} }{3}
$$
and
$$
F_a(z_k) =  e^{-\lambda z_k}(o(1)+L(z_k))<e^{-\lambda z_k}\frac{\alpha_0 e^{-\lambda w}}{4}< F_a(z_k+r_k).
$$
This contradicts the fact that $\rho_a(u)$ is decreasing in $u$, i.e. $F_a(z)$ is decreasing in $z$.
We have shown that $ \liminf_{z\to \infty} L(z)>0$, which implies the desired result for all sufficiently large $z$.
For bounded $z$, i.e. bounded $u$, the result follows from $1\ge \rho_a(u) \ge \rho(u)$. 
\end{proof}

\section{Proof of Theorem \ref{thmBaxyIntro}}\label{secBaxy}
Recall that  $\mathcal{B}_{a,y}$ (resp. $\mathcal{B}^*_{a,y}$)  is the set corresponding to $\theta(n) = y n^{a}$ (resp. $\theta(n) = \max(y, (yn)^{a})$) and $B_{a}(x,y)$ (resp. $B^*_{a}(x,y)$) is the corresponding counting function.
A tilde symbol on top of any of these sets or counting functions indicates that the set is restricted to squarefree numbers. 
We write
$$
u=\frac{\log x}{\log y}.
$$

\begin{theorem}\label{thmBaxy}
Let $a>0$ be fixed. Assume \eqref{rhoasy} holds for some constants $\mu_a \ge \lambda_a \ge 0$ and $\mu_a\neq \lambda_a+1$. 
Uniformly for $x\ge y\ge 2$, we have
$$
B_a(x,y) = x \, \eta_{a,y} \, \rho_a(u) +O_a\left(\frac{x u^{-\lambda_a}}{\log x} +\frac{x u^{-\mu_a}}{\log y}\right),
$$
$$
\tilde{B}_a(x,y) = x \, \tilde{\eta}_{a,y} \, \rho_a(u) +O_a\left(\frac{x u^{-\lambda_a}}{\log x} +\frac{x u^{-\mu_a}}{\log y}\right),
$$
where $\eta_{a,y} =1+O_a(1/\log y)$ and $\tilde{\eta}_{a,y} =1/\zeta(2)+O_a(1/\log y)$  for $y\ge 2$.

If  $\mu_a = \lambda_a+1$, both results hold with an error term of $O_a(x u^{-\lambda_a} \log(u)/\log x)$. 
\end{theorem}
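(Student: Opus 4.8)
The goal is to estimate $B_a(x,y)$ and $\tilde B_a(x,y)$ by peeling off the largest prime factor of $n\in\mathcal B_{a,y}$. If $n = mp$ with $p = P^+(n)$, the defining condition $p_{j+1}\le y(p_1\cdots p_j)^a$ for $0\le j<k$ says exactly that $m\in\mathcal B_{a,y}$ \emph{and} $p\le y m^a$, with no lower constraint on $p$ other than $p\ge P^+(m)$. Hence, separating the term $n=1$,
\begin{equation*}
B_a(x,y) = 1 + \sum_{\substack{m\in\mathcal B_{a,y}\\ m\le x}} \ \sum_{\substack{P^+(m)\le p\le \min(ym^a,\, x/m)}} 1 .
\end{equation*}
First I would handle the range where $m$ is large, say $m > x/y$ (equivalently $u_m := \log(x/m)/\log y < 1$); here the inner prime sum has $O(y^{a})$ terms at most, or rather is controlled by $\pi(\min(ym^a,x/m))$, and one checks directly that the total contribution is $O_a(x u^{-\lambda_a}/\log x)$, using $\rho_a(1)=1$ and the crude bound $B_a(t,y)\ll t$ from the previous subsection. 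For the main range $m\le x/y$ I would apply the prime number theorem with classical error term to the inner sum, writing $\sum_{P^+(m)\le p\le z}1 = \mathrm{li}(z) - \mathrm{li}(P^+(m)) + O(z e^{-c\sqrt{\log z}})$, and then the bulk of the work is to evaluate $\sum_m \mathrm{li}(\min(ym^a,x/m))$ by partial summation against $B_a(t,y)$.

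Next, the key device is the asymptotic $B_a(t,y)\approx t\,\rho_a(\log t/\log y)$ fed back recursively — i.e. a Dickman-type functional-equation / perturbation argument. More precisely, I would set up the identity above as an integral equation, substitute the ansatz $B_a(t,y) = t\,\eta_{a,y}\,\rho_a(\log t/\log y) + E(t,y)$, and use the convolution identity \eqref{rhoadef} defining $\rho_a$ (after the change of variables $e^z = 1+au$ turning it into \eqref{FaDef}, where the hypothesis \eqref{rhoasy} $F_a(z) = C_a e^{-\lambda_a z} + O_a(e^{-\mu_a z})$ is available) to see that the main term is self-consistent, leaving an integral inequality for the error $E$. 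The constant $\eta_{a,y}$ absorbs the discrepancy between $\mathrm{li}$ and the continuous kernel $\omega\big(\frac{u-v}{1+av}\big)\frac{dv}{1+av}$; the standard computation (as in the $a=1$ treatment of \cite{PDD} and in Saias \cite{Saias1}) gives $\eta_{a,y} = 1 + O_a(1/\log y)$ via an Euler-product / Mertens-type evaluation. For $\tilde B_a$ one repeats the argument but now $m$ ranges over squarefree elements and $p\nmid m$, i.e. $p$ is strictly larger than $P^+(m)$ is automatic except one excludes $p = $ a prime already dividing $m$; the squarefree sieving replaces $\eta_{a,y}$ by $\tilde\eta_{a,y} = \prod_p(1-p^{-2})\cdot(1+O_a(1/\log y)) = 1/\zeta(2) + O_a(1/\log y)$.

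Then I would bound the error term. Writing $E_\lambda = \sup_{2\le t\le x} |E(t,y)|\, t^{-1} u_t^{\lambda_a}$ and similarly tracking the secondary exponent $\mu_a$, the integral inequality coming from the recursion has the schematic form
\begin{equation*}
|E(x,y)| \ll_a \frac{x u^{-\lambda_a}}{\log x} + \frac{x u^{-\mu_a}}{\log y} + \frac{1}{\log y}\int \omega\!\left(\tfrac{u-v}{1+av}\right)|E(y^v,y)|\,\frac{y^v\,dv}{1+av},
\end{equation*}
and because $\rho_a(v)\asymp (1+av)^{-\lambda_a}$ by \eqref{rhoasymp2}, the iterated kernel contracts: each application of the recursion gains a factor $O_a(1/\log y)$ while the powers of $u$ match up, so a finite number of iterations (or a Gronwall-type argument on $E_\lambda$) closes the estimate with the stated error $O_a(xu^{-\lambda_a}/\log x + xu^{-\mu_a}/\log y)$. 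The borderline case $\mu_a = \lambda_a+1$ is where the two error contributions are of the same order of magnitude in $u$ but differ by $\log x$ versus $\log y$; here the convolution of $u^{-\lambda_a}$ against the kernel produces an extra $\log u$, giving the claimed $O_a(xu^{-\lambda_a}\log u/\log x)$.

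\textbf{Main obstacle.} The crux is the error-term bookkeeping: one must propagate \emph{two} exponents ($\lambda_a$ from the main pole and $\mu_a$ from the secondary term in \eqref{rhoasy}) through the recursion simultaneously, ensuring the constants stay uniform in $y\ge 2$ and that the $1/\log y$ gain at each step genuinely dominates any loss from the kernel $\omega$ (which grows only to the constant $e^{-\gamma}$, so this is fine but needs care near $u$ small). Keeping everything uniform down to $y=2$ — where $u$ can be as small as $1$ and $\rho_a(u)$ is not yet in its asymptotic regime — is the delicate part, and is presumably why the theorem is stated as a clean $O_a$-estimate rather than an asymptotic; the evaluation of $\eta_{a,y}$ and $\tilde\eta_{a,y}$ via Mertens' theorem is routine by comparison.
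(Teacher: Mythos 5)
Your outline rests on the largest-prime-factor decomposition, and that recursion does not close in the way you assume. After writing $n=mp$ with $p=P^+(n)$, the inner count is over primes $p\in[P^+(m),\min(ym^a,x/m)]$, so once you invoke the prime number theorem you are left not only with $\sum_m \mathrm{li}(\min(ym^a,x/m))$ (which you call the bulk of the work) but also with the $P^+(m)$-dependent part: the subtracted terms $\mathrm{li}(P^+(m))$ and, implicitly, the constraint that $m$ contributes at all only when $P^+(m)\le\min(ym^a,x/m)$. These pieces are of comparable size to the main term and require the joint distribution of $(m,P^+(m))$ inside $\mathcal{B}_{a,y}$, i.e.\ a two-parameter count $\#\{m\in\mathcal{B}_{a,y}:m\le t,\ P^+(m)\le w\}$ uniformly in both variables; partial summation against $B_a(t,y)$ alone cannot produce it, and your proposal never addresses it. Relatedly, the recursion you set up would naturally lead to a Dickman-type delay equation, whereas the defining relation \eqref{rhoadef} (equivalently \eqref{FaDef}) has the Buchstab kernel $\omega$, which arises from a different decomposition; so the step ``substitute the ansatz and check the main term is self-consistent against \eqref{rhoadef}'' is not available in your setup without further identities.

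The paper avoids both problems by using the complementary-factor identity of Lemma \ref{Lem1}, $[x]=\sum_{n\in\mathcal{B}_\theta}\Phi(x/n,\theta(n))$, in which the cofactor is constrained only by $P^-(r)>\theta(n)$; inserting the $\Phi$-estimate of Lemma \ref{PhiLemma} (this is where $\omega$ enters) and Lemma \ref{Sum1}, then partial summation, yields an exact convolution equation for $G_y(z)=B(x)/x$ which is solved exactly by Laplace transforms: $G_y(z)=\alpha_y F_a(z)+E_y(z)+\int_0^z F_a'(z-v)E_y(v)\,dv$. The error analysis then uses \eqref{rhoasy} together with its differentiated form \eqref{hypeq2}, a bootstrap upgrading the a priori exponent from $r=0$ to $r=\lambda_a$, and the normalization $y=x$ to identify $\eta_{a,y}=1+O_a(1/\log y)$. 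By contrast, your closing mechanism --- ``each application of the recursion gains a factor $O_a(1/\log y)$'' --- cannot contract when $y$ is bounded (at $y=2$ the putative gain is $1/\log 2>1$), which is precisely the regime where uniformity is claimed; moreover the secondary term $xu^{-\mu_a}/\log y$, and the extra $\log u$ in the case $\mu_a=\lambda_a+1$, come out of the exact integral $\int_0^z F_a'(z-v)E_y(v)\,dv$ evaluated with the two-term expansion \eqref{hypeq2}, not from an upper-bound Gronwall iteration, so even granting the main term your scheme would not reach the stated precision. These are genuine gaps, not bookkeeping.
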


Before proving Theorem \ref{thmBaxy} in the remainder of this section, we state and prove the following two Corollaries. 

\begin{corollary}\label{cor1Baxy}
Let $a>0$ be fixed. Uniformly for $x\ge y\ge 2$, 
$$
B_a(x,y) = x \, \rho_a(u) \left(1+O_a\left(\frac{1}{\log y} \right)\right) ,
$$
$$
\tilde{B}_a(x,y) = \frac{x \, \rho_a(u)}{\zeta(2)} \left(1+O_a\left(\frac{1}{\log y} \right)\right).
$$
We have
$$
B_a(x,y) \asymp x \, \rho_a(u)  \qquad (x\ge y\ge 2),
$$
$$
\tilde{B}_a(x,y) \asymp x \, \rho_a(u)  \qquad (x\ge y\ge y_0(a)).
$$
If  \eqref{rhoasy} holds for $\mu_a-\lambda_a >1$, we have
$$
B_a(x,y) = x \, \eta_{a,y} \, \rho_a(u) \left(1+O_a\left(\frac{1}{\log x} \right)\right)  \qquad (x\ge y\ge 2),
$$
$$
\tilde{B}_a(x,y) = x \, \tilde{\eta}_{a,y} \, \rho_a(u) \left(1+O_a\left(\frac{1}{\log x} \right)\right) \qquad (x\ge y\ge y_0(a)),
$$
where $\eta_{a,y} = 1+O_a(1/\log y)$ and $\eta_{a,y} \asymp 1$ for $y\ge 2$, while  $\tilde{\eta}_{a,y} =1/\zeta(2)+O_a(1/\log y)$;
by Table \ref{table1}, this holds for $a=1/i$, where $i=1,2,\ldots, 10$, and by Lemma \ref{lemag1} it holds for all $a>1$.
\end{corollary}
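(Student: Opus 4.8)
The plan is to read off everything from Theorem~\ref{thmBaxy}, combined with the two-sided bound $\rho_a(u)\asymp_a(1+au)^{-\lambda_a}$ supplied by \eqref{rhoasymp2}. Throughout we are in the range $x\ge y\ge 2$, so $u=\log x/\log y\ge 1$ and hence $\rho_a(u)\asymp_a u^{-\lambda_a}$.

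The first step is to fix an admissible value of $\mu_a$ for Theorem~\ref{thmBaxy}. When $a=1/i$ with $1\le i\le 10$, Theorem~\ref{thmFara}(i) and Table~\ref{table1} provide \eqref{rhoasy} with an explicit $\mu_a>\lambda_a$, and one checks from the table that $\mu_a\ne\lambda_a+1$. When $a>1$, Lemma~\ref{lemag1} gives \eqref{rhoasy} with $\mu_a=\lambda_a+1+\varepsilon_a>\lambda_a+1$. For every other $a>0$, \eqref{rhoasy} holds trivially with $\mu_a=\lambda_a$ (and any $C_a$, say $C_a=0$), by the upper bound in \eqref{rhoasymp2}; here $\mu_a=\lambda_a\ne\lambda_a+1$ automatically. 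In all cases $\mu_a\ge\lambda_a$ and $\mu_a\ne\lambda_a+1$, so Theorem~\ref{thmBaxy} applies:
$$
B_a(x,y)=x\,\eta_{a,y}\,\rho_a(u)+O_a\left(\frac{x\,u^{-\lambda_a}}{\log x}+\frac{x\,u^{-\mu_a}}{\log y}\right),
$$
and likewise for $\tilde B_a(x,y)$ with $\eta_{a,y}$ replaced by $\tilde\eta_{a,y}$. Since $u\ge 1$ and $\mu_a\ge\lambda_a$ we have $u^{-\mu_a}\le u^{-\lambda_a}$, and since $x\ge y$ we have $\log x\ge\log y\ge\log 2$; hence the error term is $O_a(x\,u^{-\lambda_a}/\log y)=O_a(x\,\rho_a(u)/\log y)$. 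Feeding in $\eta_{a,y}=1+O_a(1/\log y)$ and $\tilde\eta_{a,y}=1/\zeta(2)+O_a(1/\log y)$ yields the first two identities of the Corollary.

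For the order-of-magnitude statements I would additionally invoke that $\eta_{a,y}\asymp_a 1$ for all $y\ge 2$; this is visible from the explicit form of $\eta_{a,y}$ obtained in the proof of Theorem~\ref{thmBaxy}, and it is the only place where the nontrivial lower bound for small $y$ (such as $y=2$, the case $a=1$ going back to Saias \cite{Saias1}) is used. Combined with $\rho_a(u)\asymp_a u^{-\lambda_a}$ and the error bound above, this gives $B_a(x,y)\asymp_a x\,\rho_a(u)$ for all $x\ge y\ge 2$. The squarefree count is easier in this respect: since $\tilde\eta_{a,y}=1/\zeta(2)+O_a(1/\log y)$, one simply takes $y_0(a)$ large enough (in terms of $a$) that $\tilde\eta_{a,y}\asymp 1$ and the $O_a(1/\log y)$ term cannot swamp the main term, and then $\tilde B_a(x,y)\asymp_a x\,\rho_a(u)$ for $x\ge y\ge y_0(a)$.

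It remains to record the sharpened estimate under the extra hypothesis $\mu_a-\lambda_a>1$. In that case $u^{-\mu_a}/\log y=u^{-\lambda_a}\,u^{-(\mu_a-\lambda_a)}/\log y\le u^{-\lambda_a}\,u^{-1}/\log y=u^{-\lambda_a}/\log x$, using $u\log y=\log x$ and $u\ge 1$, so the entire error term in Theorem~\ref{thmBaxy} becomes $O_a(x\,u^{-\lambda_a}/\log x)$. Dividing by $x\,\eta_{a,y}\,\rho_a(u)\asymp_a x\,u^{-\lambda_a}$ (here using $\eta_{a,y}\asymp_a 1$ again) turns this into a relative error $O_a(1/\log x)$, i.e.\ $B_a(x,y)=x\,\eta_{a,y}\,\rho_a(u)(1+O_a(1/\log x))$, and similarly for $\tilde B_a$; by the first step this hypothesis is satisfied for $a=1/i$ with $1\le i\le 10$ and for all $a>1$. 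I expect the genuine content to lie not in any of these deductions but in the uniform lower bound packaged as $\eta_{a,y}\asymp_a 1$ for $y\ge 2$: pushing the estimate for $\eta_{a,y}$ all the way down to the smallest permissible $y$ is exactly where the substance of Theorem~\ref{thmBaxy}, rather than the soft Laplace-transform analysis of $\rho_a$, is required.
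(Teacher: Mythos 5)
Most of your deductions match the paper's: for the first two estimates you (like the paper) feed Theorem \ref{thmBaxy} with an admissible $\mu_a$ (the paper simply takes $\mu_a=\lambda_a$, admissible by Lemma \ref{lemag0}, for every $a>0$), and for the refined estimates under $\mu_a-\lambda_a>1$ your reduction of the error term to $O_a(xu^{-\lambda_a}/\log x)$ is exactly the paper's. The problem is the third estimate, $B_a(x,y)\asymp_a x\rho_a(u)$ for all $x\ge y\ge 2$, and with it the claim $\eta_{a,y}\asymp_a 1$ for $y\ge 2$. You dispose of the lower bound in the range $2\le y\le y_0(a)$ by asserting that $\eta_{a,y}\asymp_a 1$ is ``visible from the explicit form of $\eta_{a,y}$'' in the proof of Theorem \ref{thmBaxy}. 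It is not: that proof only yields $\eta_{a,y}=\alpha_y-\lambda_a\beta_y$ with $\alpha_y\ll_a 1$, $\beta_y\ll_a 1/\log y$, and $\eta_{a,y}=1+O_a(1/\log y)$, and for bounded $y$ the $O_a(1/\log y)$ term is $O_a(1)$, so no positive lower bound on $\eta_{a,y}$ follows. Moreover $\eta_{a,y}\asymp 1$ for $y\ge2$ is itself part of the corollary's conclusion, so it cannot be invoked; in the paper the implication runs in the opposite direction: the two-sided bound $B_a(x,y)\asymp x\rho_a(u)$ down to $y=2$ is proved first, by a separate argument, and only then is $\eta_{a,y}\asymp 1$ for bounded $y$ read off from it. You correctly identify this lower bound as the substantive point, but you do not actually supply an argument for it.

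The missing idea is elementary and does not come from the Laplace-transform machinery at all. For $2\le y\le y_0(a)$ one uses monotonicity in $y$ and a lifting trick: choose $m$ with $2^{am}\ge y_0$; if $n=p_1\cdots p_k$ is counted in $B_a(x/2^m,y_0)$, then $n'=2^mn$ is counted in $B_a(x,2)$, because the $m$ initial factors $2$ trivially satisfy the growth condition and each subsequent prime satisfies $p_{j+1}\le y_0(p_1\cdots p_j)^a\le 2\,(2^m p_1\cdots p_j)^a$. Hence
\begin{equation*}
B_a(x,y)\ \ge\ B_a(x,2)\ \ge\ B_a(x/2^m,y_0)\ \asymp_a\ x\,u^{-\lambda_a}
\end{equation*}
whenever $x/2^m\ge y_0$, the last step by the already-established case $y\ge y_0$ (and for $x/2^m<y_0$ both sides are $\asymp_a 1$). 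Combined with the upper bound from the first estimate this gives $B_a(x,y)\asymp_a x\rho_a(u)$ for all $x\ge y\ge2$, and only then does $\eta_{a,y}\asymp_a 1$ for $y\ge2$ follow, which is what legitimizes dividing by $x\,\eta_{a,y}\,\rho_a(u)$ in your final two relative-error statements. Your treatment of the squarefree case ($y\ge y_0(a)$, where $\tilde\eta_{a,y}=1/\zeta(2)+O_a(1/\log y)$ suffices) is fine and agrees with the paper.
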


We conjecture that the last two estimates in Corollary \ref{cor1Baxy} hold for all $a>0$.
Since $D_1(x,y)=B_1(x,y)$ by Lemma \ref{lemTen}, the penultimate estimate of Corollary \ref{cor1Baxy} with $a=1$ coincides with \cite[Thm. 1.3]{PDD}.

\begin{proof}
The first two estimates follow from Theorem \ref{thmBaxy}, since $\rho_a(u)\asymp u^{-\lambda_a}$ and $\mu_a=\lambda_a$ is admissible 
by Lemma \ref{lemag0}. 
The third estimate follows from the first provided $y\ge y_0$, say.
If $2 \le y \le y_0$, we need to show $B_a(x,y) \gg_a x \rho_a(u)$.
We choose $m$ such that $2^{am}\ge y_0$. If $n$ is counted in $B_a(x/2^m, y_0)$, then $n'=2^m n$ is counted in $B_a(x,2)$.
Thus,
\begin{equation*}
B_a(x,y) \ge B_a(x,2) \ge B_a(x/2^m, y_0) \asymp x u^{-\lambda_a},
\end{equation*}
if $x/2^m \ge y_0$.
If $x/2^m < y_0$, both $B_a(x,y) \asymp 1$ and $x u^{-\lambda_a} \asymp 1$. 

The fourth estimate follows from the second.

The last two estimates follow from Theorem \ref{thmBaxy}. The claim that $\eta_{a,y} \asymp 1$ for $y\ge 2$ follows from $B_a(x,y) \asymp x \, \rho_a(u)$
when $y\ll 1$ .
\end{proof}

\begin{corollary}\label{corBaxy}
Let $a>0$ be fixed. We have
$$
B^*_{a} (x,y)  \asymp_a x  \rho_a(u)  \qquad (x\ge y\ge 2),
$$
$$
\tilde{B}^*_{a} (x,y)\asymp_a x  \rho_a(u)  \qquad (x\ge y\ge y_1(a)).
$$
\end{corollary}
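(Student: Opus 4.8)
The plan is to trap $\mathcal{B}^*_{a,y}$ between two sets of the form $\mathcal{B}_{a,Y}$ with a rescaled base $Y$, to invoke Corollary~\ref{cor1Baxy} for those, and to absorb the rescaling using $\rho_a(cu)\asymp_a\rho_a(u)$ for each fixed $c>0$, which is immediate from \eqref{rhoasymp2} (that is, $\rho_a(u)\asymp_a(1+au)^{-\lambda_a}$). Recall that $\mathcal{B}^*_{a,y}$ is governed by $\theta^*(n)=\max(y,(yn)^a)$ and $\mathcal{B}_{a,Y}$ by $\theta(n)=Yn^a$, and that a pointwise larger $\theta$ produces a larger set. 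I would first dispose of $a>1$: then $(yn)^a\ge y^a>y$ for every $n\ge1$, so $\theta^*\equiv(yn)^a=y^an^a$, hence $\mathcal{B}^*_{a,y}=\mathcal{B}_{a,y^a}$ and $\tilde{\mathcal{B}}^*_{a,y}=\tilde{\mathcal{B}}_{a,y^a}$; thus $B^*_a(x,y)=B_a(x,y^a)\asymp_a x\rho_a(u/a)\asymp_a x\rho_a(u)$ and likewise for the squarefree counts (the trivial range $x<y^a$, where every integer $\le x$ lies in $\mathcal{B}^*_{a,y}$ and both sides are $\asymp_a x$, is handled directly). So assume from now on that $0<a\le1$.

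For the upper bound when $0<a\le1$: since $y\le yn^a$ and $(yn)^a=y^an^a\le yn^a$ for all $n\ge1$, we get $\theta^*(n)\le yn^a$, hence $\mathcal{B}^*_{a,y}\subseteq\mathcal{B}_{a,y}$, so $B^*_a(x,y)\le B_a(x,y)\asymp_a x\rho_a(u)$ and $\tilde{B}^*_a(x,y)\le\tilde{B}_a(x,y)\ll_a x\rho_a(u)$ by Corollary~\ref{cor1Baxy}.

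For the lower bound on $B^*_a$ (with $0<a\le1$): if $y\ge2^{1/a}$ then $y^a\ge2$ and $\theta^*(n)\ge(yn)^a=y^an^a$, so $\mathcal{B}_{a,y^a}\subseteq\mathcal{B}^*_{a,y}$ and $B^*_a(x,y)\ge B_a(x,y^a)\asymp_a x\rho_a(u/a)\asymp_a x\rho_a(u)$. In the remaining bounded range $2\le y<2^{1/a}$, put $m_0:=\lceil1/a\rceil$ and $c:=2^{m_0a}\in[2,4]$; given $N=q_1\cdots q_\ell\in\mathcal{B}_{a,c}$ with $q_1\le\cdots\le q_\ell$, form $n:=2^{m_0}N$ with its first $m_0$ prime factors taken equal to $2$. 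Then $\theta^*(2^j)\ge y\ge2$ for $0\le j<m_0$, $\theta^*(2^{m_0})\ge(2^{m_0})^a=c\ge q_1$, and, since $\theta^*$ is nondecreasing, $\theta^*(2^{m_0}q_1\cdots q_i)\ge c(q_1\cdots q_i)^a\ge q_{i+1}$; hence $n\in\mathcal{B}^*_{a,y}$. As these $n$ are distinct, $B^*_a(x,y)\ge B_a(x/2^{m_0},c)\asymp_a x(\log x)^{-\lambda_a}\asymp_a x\rho_a(u)$, the last step using that $y$ is bounded (when $x$ is below a constant depending on $a$, both $B^*_a(x,y)$ and $x\rho_a(u)$ are $\asymp_a1$). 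This proves $B^*_a(x,y)\asymp_a x\rho_a(u)$ for $x\ge y\ge2$. For the squarefree lower bound we are free to enlarge $y_1(a)$, so we may skip the multiplication trick: with $y_1(a):=\max\bigl(y_0(a),y_0(a)^{1/a}\bigr)$ we have $y^a\ge y_0(a)$ for $y\ge y_1(a)$, so Corollary~\ref{cor1Baxy} gives $\tilde B_a(x,y^a)\asymp_a x\rho_a(\log x/\log y^a)\asymp_a x\rho_a(u)$, and $\tilde{\mathcal{B}}_{a,y^a}\subseteq\tilde{\mathcal{B}}^*_{a,y}$ (valid for $a\le1$) yields $\tilde B^*_a(x,y)\ge\tilde B_a(x,y^a)$; combined with the upper bound, $\tilde B^*_a(x,y)\asymp_a x\rho_a(u)$ for $x\ge y\ge y_1(a)$.

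I expect the only genuinely delicate point to be the lower bound on $B^*_a(x,y)$ for small $y$ and $a<1$: there the fast-growth regime $\theta^*(n)=(yn)^a$ is entered only once $n$ exceeds a large threshold, and even then the effective base $y^a$ can drop below $2$, making $\mathcal{B}_{a,y^a}$ trivial. The remedy above — manufacturing the threshold by hand with a bounded power of $2$ — works precisely because $\theta^*$ is monotone and $(yn)^a\ge n^a$, at the cost of only a change in the implied constant, which is permitted here.
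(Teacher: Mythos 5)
Your proof is correct and follows essentially the same route as the paper: comparison with $\mathcal{B}_{a,y}$ for the upper bound, with $\mathcal{B}_{a,y^a}$ for the lower bound when $y^a\ge 2$, multiplication by a fixed power of $2$ in the regime $y^a<2$, and an enlarged threshold $y_1(a)$ in the squarefree case. The only cosmetic difference is that in the small-$y$ case you compare with $\mathcal{B}_{a,c}$ for $c=2^{\lceil 1/a\rceil a}$ (using that $y$ is bounded there), whereas the paper keeps the base $y$ and shows $2^m n\in\mathcal{B}^*_{a,y}$ for $n\in\mathcal{B}_{a,y}$ with $m=\lceil(1/a-1)/a\rceil$; both work.
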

\begin{proof}
When $a\ge 1$, then $\max(y, (yn)^{a}) = (yn)^a$ and the estimates follow from the order-of-magnitude results
in Corollary \ref{cor1Baxy}  with $y$ replaced by $y^a$.  

Hence we may assume $0<a<1$.

We clearly have $B^*_{a}(x,y) \le B_{a}(x,y)$.
To show  $B^*_{a}(x,y)\gg_a B_{a}(x,y)$, we consider two cases. First, assume that $y^a \ge 2$.
If $n=p_1\cdots p_r \in \mathcal{B}_{a,y^{a}}$, then 
$$
p_{j+1} \le y^{a} (p_1\cdots p_j)^{a} \le \max(y,(y p_1 \cdots p_j)^{a}) \qquad (0\le j < r).
$$
Thus, $\mathcal{B}^*_{a,y} \supseteq \mathcal{B}_{a,y^{a}}$ and 
$$
B^*_{a}(x,y) \ge B_{a}(x,y^{a}) \asymp_a x \left(\frac{\log y^a }{\log x}\right)^{\lambda_a}
\asymp_a x \left(\frac{\log y }{\log x}\right)^{\lambda_a}.
$$
Next, assume that $y^a<2$, and that $n=p_1\cdots p_r \in \mathcal{B}_{a,y}$. Let $n'=2^m n$, where $m=\lceil (1/a-1)/a \rceil$. 
Then $2 \le \max(y, y^a)=y$ and, for $0\le j<r$,
$$
p_{j+1} \le y (p_1 \cdots p_j)^a < (y 2^m p_1 \cdots p_j)^a \le \max(y,  (y 2^m p_1 \cdots p_j)^a).
$$
Thus, $n'=2^m n \in \mathcal{B}^*_{a,y}$ and 
$$
B^*_a(x,y) \ge B_a(x/2^m,y) \gg_a x \left(\frac{\log y }{\log x}\right)^{\lambda_a}.
$$

We clearly have $\tilde{B}^*_{a}(x,y) \le \tilde{B}_{a}(x,y)$. Since  $\max(y, (yn)^{a}) \ge y^a n^a$, 
$\tilde{B}^*_{a}(x,y) \ge \tilde{B}_a(x,y^a) \gg x \rho_a(u)$, for $y\ge y_1(a):=y_0(a)^{1/a}$, by Corollary \ref{cor1Baxy}.

\end{proof}

\begin{lemma}\label{lemhypeq}
Let $a>0$ be fixed.
Let $F_a(z)$ be given by \eqref{FaDef} and assume that 
\begin{equation}\label{hypeq1}
 F_a(z) =C_a e^{-\lambda_a z} + O_a(e^{-\mu_a z}) \qquad (z\ge 0),
\end{equation}
where $g_a(-\lambda_a) =0$ and $\mu_a \ge \lambda_a$. Then
\begin{equation}\label{hypeq2}
F'_a(z) = -\lambda_a C_a e^{-\lambda_a z} + O_a(e^{-\mu_a z}) \qquad (z\ge 0).
\end{equation}
\end{lemma}

\begin{proof}
If $0\le z < \log(a+1)$, we have $F_a(z)=1$ and hence $F'_a(z)=0$, while both the main term and error term of \eqref{hypeq2} are $O(1)$.
If $z\ge \log(a+1)$, \eqref{FaDef} implies
\begin{equation}\label{FapEq}
F'_a(z)= -\frac{1}{a} \int_0^{z-\log(a+1)} F_a(u) \, \omega'\left(\frac{e^{z-u}-1}{a}\right)  \frac{e^{z-u}}{a} \, d u 
 -\frac{1}{a} F_a(z-\log(a+1)).
\end{equation}
We apply \eqref{hypeq1} to each occurrence of $F_a$ on the right-hand side of \eqref{FapEq}.
Since the factor $\omega'(...)$ is $ \ll e^{-(z-u)(\mu+2)}$, the contribution from the error term in \eqref{hypeq1} to the integral is $O_a(e^{-\mu z})$.
For the contribution from the main term to the integral, apply integration by parts, using $\omega(v)-e^{-\gamma}$ as an antiderivative of $\omega'(v)$. 
The resulting integral, after the change of variables $\tilde{u}=z-u$, is 
$$
\int_{\log(1+a)}^z e^{-\lambda_a (z-u)} \left(\omega\left(\frac{e^u -1}{a}\right) - e^{-\gamma} \right) du,
$$
which we replace by 
$$\int_0^\infty - \int_z^\infty - \int_0^{\log(1+a)}.$$
The first integral equals
$$
e^{-\lambda_a z} \left(\widehat{\Omega}_a(-\lambda_a) + \frac{e^{-\gamma}}{\lambda_a}\right) =e^{-\lambda_a z} \left(-a + \frac{e^{-\gamma}}{\lambda_a}\right),
$$
since $g_a(-\lambda_a)=0$. The second integral is $O_a(e^{-\mu z})$. The third integral can be evaluated easily, since the Buchstab function vanishes here.
Combining everything yields \eqref{hypeq2}.
\end{proof}

Let $\mathcal{B}_\theta$ be as in Section \ref{subsecBtheta} and let $B_\theta(x)$ be its counting function. Let
\begin{equation}\label{thetadef}
\theta: \mathbb{N}\to \mathbb{R}\cup \{\infty\}, \quad 2\le \theta(n)\le \theta(nm) ,\quad (n,m \in \mathbb{N}).
\end{equation} 
Define
$$
\Phi(x,y) = |\{1\le n\le x: P^-(n)>y\}|.
$$
The following two lemmas are from \cite{SPA}, where it is assumed that $\theta(n) \ge P^+(n)$ for all $n\ge 1$. 
If  $n\in \mathcal{B}_\theta$, \eqref{thetadef} implies $\theta(n)\ge P^+(n)$, since 
$\theta(n) \ge \theta(n/P^+(n)) \ge P^+(n). $ If $n \notin \mathcal{B}_\theta$, the value of $\theta(n)$ does not affect $\mathcal{B}_\theta$. 
Thus the following two lemmas also hold assuming \eqref{thetadef}.

\begin{lemma}\label{Lem1}(see \cite[Lemma 3]{SPA})
Assume $\theta$ satisfies \eqref{thetadef}. For $x\ge 0$, 
$$
[x] = \sum_{n \in \mathcal{B}_\theta} \Phi(x/n, \theta(n)).
$$
\end{lemma}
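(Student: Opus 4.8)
The plan is to establish the identity $[x] = \sum_{n \in \mathcal{B}_\theta} \Phi(x/n, \theta(n))$ by exhibiting a bijection between the integers $m \le x$ and pairs $(n, m')$ where $n \in \mathcal{B}_\theta$, $m' \le x/n$ and $P^-(m') > \theta(n)$. The key observation is that every integer $m \ge 1$ has a unique factorization $m = n \cdot m'$ in which $n$ is the largest initial segment of the prime factors (listed with multiplicity in increasing order) that lies in $\mathcal{B}_\theta$, and $m'$ is the product of the remaining (larger) primes. Concretely, write $m = q_1 q_2 \cdots q_r$ with $q_1 \le q_2 \le \cdots \le q_r$ prime, and let $j$ be the largest index such that $q_1 \cdots q_j \in \mathcal{B}_\theta$ (with $j = 0$, i.e. $n=1$, allowed); set $n = q_1 \cdots q_j$ and $m' = q_{j+1} \cdots q_r$.

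The main steps are as follows. First I would check that $n \in \mathcal{B}_\theta$ by construction, and that $m' \le x/n$ follows from $m = nm' \le x$. Second, I would verify the crucial inequality $P^-(m') > \theta(n)$: if $m' > 1$, then $q_{j+1}$ is its smallest prime factor, and by maximality of $j$ we have $q_1 \cdots q_{j+1} \notin \mathcal{B}_\theta$. Since $n = q_1 \cdots q_j \in \mathcal{B}_\theta$, the defining condition of $\mathcal{B}_\theta$ fails precisely at the step of appending $q_{j+1}$, which (because $q_{j+1} \ge P^+(n)$) forces $q_{j+1} > \theta(n)$; hence $P^-(m') = q_{j+1} > \theta(n)$. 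If $m' = 1$ the condition $P^-(1) = \infty > \theta(n)$ holds trivially. Third, I would confirm that this map $m \mapsto (n, m')$ is a bijection onto the set of admissible pairs: given any $n \in \mathcal{B}_\theta$ and any $m' \le x/n$ with $P^-(m') > \theta(n)$, the product $m = nm'$ recovers exactly this decomposition, because every prime of $m'$ exceeds $\theta(n) \ge P^+(n)$, so the sorted prime list of $m$ begins with the primes of $n$; moreover appending the smallest prime of $m'$ to $n$ yields a product $> \theta(n) \cdot 1 \ge \theta(n)$ in the relevant sense, so it cannot lie in $\mathcal{B}_\theta$, confirming that $n$ is the maximal initial segment. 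Summing $1$ over all $m \le x$ and reorganizing by the value of $n$ gives $[x] = \sum_{n \in \mathcal{B}_\theta} |\{m' \le x/n : P^-(m') > \theta(n)\}| = \sum_{n \in \mathcal{B}_\theta} \Phi(x/n, \theta(n))$.

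The one point requiring care — and the only real obstacle — is the precise reading of the condition defining $\mathcal{B}_\theta$ when we append a prime to $n$. The definition states $p_{j+1} \le \theta(p_1 \cdots p_j)$; so "$q_1 \cdots q_{j+1} \notin \mathcal{B}_\theta$ while $q_1 \cdots q_j \in \mathcal{B}_\theta$" is equivalent to "$q_{j+1} > \theta(q_1 \cdots q_j) = \theta(n)$", which is exactly what is needed. Here one uses the monotonicity hypothesis \eqref{thetadef} together with the remark preceding the lemma (that $n \in \mathcal{B}_\theta$ implies $\theta(n) \ge P^+(n)$, so the first $j$ primes of $m$ really do constitute a valid chain and no earlier obstruction occurs). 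Since the excerpt already records that the lemmas from \cite{SPA} remain valid under hypothesis \eqref{thetadef} rather than the stronger $\theta(n) \ge P^+(n)$, I would simply cite that reduction and invoke \cite[Lemma 3]{SPA}, noting that the bijective argument above is the content of that reference.
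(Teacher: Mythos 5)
Your proof is correct and follows the same route as the source: the paper itself defers to \cite[Lemma 3]{SPA} (after noting that \eqref{thetadef} forces $\theta(n)\ge P^+(n)$ for $n\in\mathcal{B}_\theta$), and that argument—as well as the paper's own proof of the squarefree analogue, Lemma \ref{Lem1sf}—is exactly your unique factorization $m=nm'$ with $n\in\mathcal{B}_\theta$ maximal and $P^-(m')>\theta(n)$. Only a cosmetic quibble: the phrase about appending the smallest prime of $m'$ ``yielding a product $>\theta(n)$'' is loosely worded, but you state the correct criterion ($q_{j+1}>\theta(n)$) immediately afterwards, so the argument stands.
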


\begin{lemma}\label{Sum1}(see \cite[Theorem 1]{SPA})
Assume $\theta$ satisfies \eqref{thetadef}.
If $B_\theta(x)=o(x)$, then 
$$ 1=\sum_{n \in \mathcal{B}_\theta} \frac{1}{n} \prod_{p\le \theta(n)} \left(1-\frac{1}{p}\right).$$
\end{lemma}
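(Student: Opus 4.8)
The plan is to deduce the identity from the counting relation of Lemma \ref{Lem1} by dividing by $x$ and letting $x\to\infty$, the hypothesis $B_\theta(x)=o(x)$ serving only to control the tail of the limiting series.

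Set $\delta_n:=\frac1n\prod_{p\le\theta(n)}(1-1/p)$ for $n\in\mathcal{B}_\theta$ (so $\delta_n=0$ when $\theta(n)=\infty$). Since the density of integers with no prime factor $\le\theta(n)$ is $\prod_{p\le\theta(n)}(1-1/p)$, we have $\Phi(t,\theta(n))=t\prod_{p\le\theta(n)}(1-1/p)+O_n(1)$ for finite $\theta(n)$, while $\Phi(t,\theta(n))=1$ for $t\ge1$ when $\theta(n)=\infty$; in either case $\frac1x\Phi(x/n,\theta(n))\to\delta_n$ as $x\to\infty$. Writing Lemma \ref{Lem1} as $[x]=\sum_{n\in\mathcal{B}_\theta,\ n\le x}\Phi(x/n,\theta(n))$ (terms with $n>x$ vanish) and fixing $N$, the finite head $\frac1x\sum_{n\le N}\Phi(x/n,\theta(n))$ tends to $\sum_{n\in\mathcal{B}_\theta,\ n\le N}\delta_n$; since this head is at most $[x]/x\le1$, letting first $x\to\infty$ and then $N\to\infty$ gives $\sum_{n\in\mathcal{B}_\theta}\delta_n\le1$. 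In particular this series converges.

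For the reverse inequality I must show that no mass escapes, i.e.\ that the tail $E_N(x):=\frac1x\sum_{n\in\mathcal{B}_\theta,\ N<n\le x}\Phi(x/n,\theta(n))$ stays small. Here I invoke the standard sieve bound $\Phi(t,y)\ll t/\log(2y)+1$, uniformly for $t\ge1$, $y\ge2$ (see, e.g., \cite{Ten}), together with Mertens' estimate $1/\log(2y)\ll\prod_{p\le y}(1-1/p)$; since $\theta(n)\ge2$ these give $\frac1x\Phi(x/n,\theta(n))\ll\delta_n+\frac1x$, hence
$$
E_N(x)\ \ll\ \sum_{n\in\mathcal{B}_\theta,\ n>N}\delta_n\ +\ \frac{B_\theta(x)}{x}.
$$
Because $[x]/x\to1$ and the head converges, $E_N(x)$ itself tends to $R(N):=1-\sum_{n\in\mathcal{B}_\theta,\ n\le N}\delta_n\ge0$ as $x\to\infty$; letting $x\to\infty$ above and using $B_\theta(x)=o(x)$ yields $R(N)\ll\sum_{n>N}\delta_n$. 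Since $\sum_n\delta_n$ converges, the right-hand side tends to $0$, so $R(N)\to0$, which is precisely $\sum_{n\in\mathcal{B}_\theta}\delta_n=1$.

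The crux is the tail estimate. The crude bound $\Phi(x/n,\theta(n))\le x/n$ only yields $E_N(x)\le\sum_{N<n\le x,\ n\in\mathcal{B}_\theta}1/n$, which need not be small because $\sum_{n\in\mathcal{B}_\theta}1/n$ may diverge under the mere hypothesis $B_\theta(x)=o(x)$; it is essential to exploit that $\Phi(t,y)$ decays like $t/\log(2y)$, so that the tail is dominated by the convergent series $\sum_{n>N}\delta_n$, after which $B_\theta(x)=o(x)$ is exactly what kills the leftover $B_\theta(x)/x$. The remaining points are routine: checking that the sieve bound holds uniformly down to $t=1$, and that integers $n\in\mathcal{B}_\theta$ with $\theta(n)=\infty$ are harmless, since they contribute $\delta_n=0$ on the arithmetic side and, via $\Phi(x/n,\infty)=1$, only $O(B_\theta(x)/x)=o(1)$ to $E_N(x)$.
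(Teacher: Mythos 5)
Your proof is correct and is essentially the approach the paper relies on: the paper defers Lemma \ref{Sum1} to \cite[Theorem 1]{SPA}, but its own proof of the squarefree analogue (Lemma \ref{Sum1sf}) uses exactly your ingredients — the exact identity of Lemma \ref{Lem1} (resp.\ \ref{Lem1sf}), the uniform sieve bound $\Phi(t,y)\ll 1+t/\log y$ with Mertens to dominate the tail by the convergent series, and the fixed-$n$ density limit, with $B_\theta(x)=o(x)$ absorbing the leftover. So this matches the intended argument, correctly executed.
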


\begin{lemma}\label{PhiLemma}(see \cite[Lemma 6]{OMG})
Uniformly for $x>0$, $y\ge 2$, $u=\frac{\log x}{\log y}$,
\begin{equation*}
\begin{split}
\Phi(x,y) &= 1_{x\ge 1}+ x \prod_{p\le y}\left(1-\frac{1}{p}\right)
+\frac{x}{\log y}\! \left\{\omega(u)-e^{-\gamma}\! -\left.\frac{y}{x}\right|_{x\ge y} 
+ O\left(\frac{e^{-u/3}}{\log y}\right)\! \right\}.\\
\end{split}
\end{equation*}
\end{lemma}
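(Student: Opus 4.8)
The plan is to recover Lemma \ref{PhiLemma} from the classical analysis of the sieve of Eratosthenes, refined so as to be uniform in $x$ and $y$ simultaneously. First I would dispose of two easy ranges. For $0<x<y$ one has $\Phi(x,y)=1_{x\ge1}$ exactly, and, using Mertens' product formula $\prod_{p\le y}(1-1/p)=e^{-\gamma}/\log y+O(1/\log^2 y)$ together with the convention $\omega(u)=0$ for $u<1$, the asserted identity collapses to $\Phi(x,y)=1_{x\ge1}+O(x/\log^2 y)$, which is admissible since $e^{-u/3}\asymp 1$ on this range. For $y\le x<y^2$, every integer counted by $\Phi(x,y)$ other than $1$ is a prime in $(y,x]$, so $\Phi(x,y)=1+\pi(x)-\pi(y)$; inserting the prime number theorem with error $O(x\exp(-c\sqrt{\log x}))$, the expansion $\mathrm{li}(t)=t/\log t+O(t/\log^2 t)$, the identity $\omega(u)=1/u$ on $[1,2]$ (so that $(x/\log y)\,\omega(u)=x/\log x$), and Mertens once more, the right side of the claimed formula matches $1+\pi(x)-\pi(y)$ to within $O(x/\log^2 y)$.

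For $x\ge y^2$ I would pass to $E(x,y):=\Phi(x,y)-x\prod_{p\le y}(1-1/p)$. Since the ``expected counts'' $x\prod_{p\le y}(1-1/p)$ satisfy Buchstab's identity exactly (the relevant Euler products telescope), $E$ obeys the clean recursion $E(x,y)=E(x,\sqrt{x})+\sum_{y<p\le\sqrt{x}}\sum_{j\ge1}E(x/p^j,p)$. I would prove the claimed expansion of $E$ by induction on $\lfloor u\rfloor$, the base cases being the two ranges above together with $\Phi(x,\sqrt{x})=1+\pi(x)-\pi(\sqrt{x})$. Feeding the inductive hypothesis into the recursion, converting the prime sums to integrals by partial summation against the prime number theorem, and invoking the delay equation $\frac{d}{du}\bigl(u(\omega(u)-e^{-\gamma})\bigr)=\omega(u-1)-e^{-\gamma}$, the main terms reorganize precisely into $x(\omega(u)-e^{-\gamma})/\log y-(y/\log y)\,1_{x\ge y}+1_{x\ge1}$ (the factor $-\pi(y)$ is what produces the $-y/\log y$ term); the tail $j\ge2$, the contribution of $\pi(\sqrt{x})$, and the images of the error terms under the recursion are all $O\bigl(xe^{-u/3}/\log^2 y\bigr)$, the governing inequality being $\log y\ll y^{u/2}e^{-u/3}$, valid for all $y\ge2$, $u\ge2$.

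The main obstacle is exactly this last point: keeping the error uniform, with an absolute implied constant, while producing the $e^{-u/3}$ gain as $u\to\infty$. On any bounded range of $u$ it is painless, since $e^{-u/3}\asymp_u1$ and de Bruijn's asymptotic $\Phi(x,y)=x\omega(u)/\log y-y/\log y+O(x/\log^2 y)$ already yields the lemma after the Mertens rewriting $x\omega(u)/\log y=x\prod_{p\le y}(1-1/p)+x(\omega(u)-e^{-\gamma})/\log y+O(x/\log^2 y)$. For $u$ large one instead uses that $\Phi(x,y)-x\prod_{p\le y}(1-1/p)$ is genuinely small — this is the fundamental lemma of the sieve with sifting level $y^{s}$, $s$ comparable to $u$ — and that $\omega(u)-e^{-\gamma}$ decays super-exponentially, so that every listed term and the residual fit comfortably under $xe^{-u/3}/\log^2 y$. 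The exponent $\tfrac13$ is not essential; any constant below $1$ works, and the only real work is to arrange the Buchstab bookkeeping so that the accumulated errors never outrun that envelope.
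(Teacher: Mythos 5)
A preliminary remark: the paper gives no internal proof of Lemma \ref{PhiLemma} — it is quoted from \cite[Lemma 6]{OMG} — so your attempt is in effect a reproof of an external result, and must be judged on its own. Your easy ranges ($x<y$, $y\le x<y^2$) and the observation that $E(x,y)=\Phi(x,y)-x\prod_{p\le y}(1-1/p)$ satisfies the exact Buchstab recursion are fine. The gap is in the uniformity of the error: the bound $x e^{-u/3}/\log^2 y$ demands two savings at once, a factor $e^{-u/3}$ in $u$ and a Mertens-quality factor $1/\log y$ below the main term $xV(y)\asymp x/\log y$, and each of your two endgame tools delivers only one of them. De Bruijn's estimate $\Phi(x,y)=(x\omega(u)-y)/\log y+O(x/\log^2 y)$ has no decay in $u$, so it covers only bounded $u$. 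The fundamental lemma with level $D=y^s$ gives $\Phi(x,y)=xV(y)\{1+O(e^{-s\log s})\}+O(y^s)$ with the forced constraint $s<u$ (else the remainder $y^s$ already exceeds the target), and $e^{-s\log s}\le e^{-u/3}/\log y$ requires $s\log s\gtrsim u/3+\log\log y$; hence this route only works when $u\log u\gtrsim \log\log y$. The joint regime $u\to\infty$, $y\ge\exp(\exp(Cu\log u))$ is covered by neither tool, yet the lemma claims a genuine $e^{-u/3}$ saving there.

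Your induction on $\lfloor u\rfloor$ cannot bridge this as set up. With the lemma itself as inductive hypothesis and the one-step split $z=\sqrt x$, the single term $E(x,\sqrt x)$ feeds in an inherited error $\asymp x/\log^2 x = x\,u^{-2}/\log^2 y$, which exceeds the target envelope $x e^{-u/3}/\log^2 y$ as soon as $u$ is large; your governing inequality $\log y\ll y^{u/2}e^{-u/3}$ only controls boundary and remainder contributions (the $j\ge 2$ tail, $\pi(\sqrt x)$, the $y^{u/2}$-sized terms), not this main error propagation. A correct argument must carry through the iteration an error that decays in both parameters simultaneously, e.g. of the shape $x\bigl(e^{-cu}+e^{-c\sqrt{\log y}}\bigr)/\log^2 y$, using PNT-strength input at the bottom level $\Phi(t,\sqrt t)=1+\pi(t)-\pi(\sqrt t)$ together with a Rankin-type bound showing that the "deep" terms of the recursion (those with all prime factors $>y$ and product $\ge x y^{-2}$, say) carry total weight $\ll e^{-cu}$; this is precisely the bookkeeping your sketch asserts ("the accumulated errors never outrun that envelope") rather than performs, and it is the content of the cited \cite[Lemma 6]{OMG}.
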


\begin{lemma}\label{LemFinalSum}
Assume $\theta(n) = y n^a$ where  $a>0$ is constant and write $u=\log x / \log y$. 
Let $\mathcal{B}=\mathcal{B}_{a,y}$ and $B(x)=B_a(x,y)$.
Assume $B(x) \ll x/ (u+1)^r$ for some constant $r\ge 0$, uniformly for $x\ge1, y\ge 2$. Then 
$$
B(x) = x\sum_{n\in \mathcal{B}}\frac{1}{n \log y n^a} \left( e^{-\gamma} - \omega\left(\frac{\log x/n}{\log y n^a}\right)\right)  
+O_a\left(1+ \frac{x (\log y)^r}{(\log xy)^{r+1}}\right),
$$
uniformly for $x\ge 1$, $y\ge 2$. 
\end{lemma}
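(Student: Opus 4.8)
The plan is to evaluate $B(x)=B_a(x,y)$ by playing off the two exact identities available for $\mathcal{B}=\mathcal{B}_{a,y}$ with $\theta(n)=yn^a$: Lemma~\ref{Lem1} gives $[x]=\sum_{n\in\mathcal{B}}\Phi(x/n,yn^a)$, and Lemma~\ref{Sum1} gives $x=x\sum_{n\in\mathcal{B}}\frac1n\prod_{p\le yn^a}(1-1/p)$ (applicable since $B(x)\ll x/(u+1)^r$ with $r>0$ forces $B(x)=o(x)$). Because $\Phi(x/n,yn^a)=0$ for $n>x$, the first sum really runs over $n\in\mathcal{B},\,n\le x$; I would insert the asymptotic formula of Lemma~\ref{PhiLemma} into each term, with $X=x/n\ge1$, $Y=yn^a\ge2$, and $v_n:=\log(x/n)/\log(yn^a)$ playing the role of $u$ there. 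The "$1_{X\ge1}$" contributions sum to exactly $B(x)$, and isolating it gives
\[
B(x)=[x]-x\!\!\sum_{n\in\mathcal{B},n\le x}\!\!\frac1n\!\prod_{p\le yn^a}\!\!\Bigl(1-\frac1p\Bigr)
+x\!\!\sum_{n\in\mathcal{B},n\le x}\!\!\frac{e^{-\gamma}-\omega(v_n)}{n\log yn^a}
+\!\!\sum_{\substack{n\in\mathcal{B}\\ n^{1+a}\le x/y}}\!\!\frac{yn^a}{\log yn^a}
+O\!\Bigl(x\!\!\sum_{n\in\mathcal{B},n\le x}\!\!\frac{e^{-v_n/3}}{n(\log yn^a)^2}\Bigr).
\]

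To match the asserted main term $x\sum_{n\in\mathcal{B}}\frac{1}{n\log yn^a}\bigl(e^{-\gamma}-\omega(v_n)\bigr)$, which runs over \emph{all} of $\mathcal{B}$, note that $v_n<1$ and hence $\omega(v_n)=0$ for $n>x$, so extending the $\omega$-sum costs exactly $x\sum_{n\in\mathcal{B},n>x}\frac{e^{-\gamma}}{n\log yn^a}$; meanwhile Lemma~\ref{Sum1} rewrites $[x]-x\sum_{n\le x}\frac1n\prod_{p\le yn^a}(1-1/p)$ as $-\{x\}+x\sum_{n>x}\frac1n\prod_{p\le yn^a}(1-1/p)$. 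Combining these two tails and invoking Mertens' theorem in the form $\prod_{p\le z}(1-1/p)=\frac{e^{-\gamma}}{\log z}+O((\log z)^{-2})$, the entire discrepancy from the main term collapses to $-\{x\}+x\sum_{n\in\mathcal{B},n>x}\frac{O(1)}{n(\log yn^a)^2}$. Thus it remains to bound
\[
E_1=x\!\!\sum_{n\in\mathcal{B},n>x}\!\!\frac{O(1)}{n(\log yn^a)^2},\qquad
E_2=\!\!\sum_{\substack{n\in\mathcal{B}\\ n^{1+a}\le x/y}}\!\!\frac{yn^a}{\log yn^a},\qquad
E_3=x\!\!\sum_{n\in\mathcal{B},n\le x}\!\!\frac{e^{-v_n/3}}{n(\log yn^a)^2},
\]
each by $O_a\bigl(1+x(\log y)^r/(\log xy)^{r+1}\bigr)$, using only the density bound $B(t)\ll t/(u_t+1)^r$ with $u_t:=\log t/\log y$ and partial summation.

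For $E_1$, Abel summation with the substitution $t=y^w$ (so $u_t=w$, $\log yt^a=(1+aw)\log y$) gives $\sum_{n>x}\frac1{n(\log yn^a)^2}\ll_a\frac1{\log y}\int_u^\infty\frac{dw}{(w+1)^r(1+aw)^2}\ll_a\frac1{\log y\,(u+1)^{r+1}}=\frac{(\log y)^r}{(\log xy)^{r+1}}$. For $E_2$, the map $t\mapsto yt^a/\log yt^a$ is increasing once $\log yt^a\ge1$ (the boundedly many smaller $n$ contributing $O_a(1)$), so with $N=(x/y)^{1/(1+a)}$ one has $E_2\ll_a1+\frac{yN^a}{\log yN^a}B(N)\ll_a1+\frac{yN^{a+1}}{\log yN^a\,(u_N+1)^r}$; since $yN^{a+1}=x$, $\log yN^a\asymp_a\log xy$, and $u_N+1=\frac{u+a}{1+a}\asymp_au+1$, this is $O_a\bigl(1+x(\log y)^r/(\log xy)^{r+1}\bigr)$. (When $x<y$ one has $N<1$ and $E_2$ is empty; the remaining steps are unaffected.)

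The main obstacle is $E_3$, which I would split at $N_0=(x/y)^{1/(1+a)}$, the point where $v_n=1$. For $n\ge N_0$ one has $\log yn^a\gg_a\log xy$, so that piece is $\ll_a\frac{x}{(\log xy)^2}\sum_{N_0\le n\le x,n\in\mathcal{B}}\frac1n\ll_{a,r}\frac{x\log y\,(u+1)^{1-r}}{(\log xy)^2}=\frac{x(\log y)^r}{(\log xy)^{r+1}}$, the inner sum estimated by Abel summation. For $n<N_0$ the factor $(\log yn^a)^{-2}$ must be kept intact — replacing it by $(\log xy)^{-2}$ loses a factor $\log y$ — so, writing $n=y^w$, $v_w=\frac{u-w}{1+aw}$, $w_{N_0}=\frac{u-1}{1+a}$, and using $B(t)\ll t/(u_t+1)^r$ through Stieltjes/partial summation, this piece is $\ll_a\frac{x}{\log y}\int_0^{w_{N_0}}\frac{e^{-v_w/3}}{(1+aw)^2(w+1)^r}\,dw$. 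The integrand is maximal at the right endpoint, where it is $\asymp_a(u+1)^{-(r+2)}$, while the exponential $e^{-v_w/3}$ confines the effective range of integration to length $\asymp_au+1$ (since $\frac{d}{dw}v_w=-\frac{1+au}{(1+aw)^2}\asymp_a-\frac1{u+1}$ near $w_{N_0}$); hence the integral is $\ll_a\frac1{\log y\,(u+1)^{r+1}}$ up to an exponentially small tail, and the $n=1$ boundary contribution $\frac{xe^{-u/3}}{(\log y)^2}$ is absorbed via $e^{-u/3}(u+1)^{r+1}\ll_r1$. I expect making this peak/width estimate fully rigorous — carrying out the Stieltjes integration against $dB(t)$ with only an upper bound (not a density) for $B$, and controlling the non-monotone derivative of $e^{-v_t/3}/(t(\log yt^a)^2)$ — to be the one genuinely delicate point, $E_1$ and $E_2$ being comparatively routine. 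Assembling the three bounds yields the stated estimate.
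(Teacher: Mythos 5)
Your proposal is correct and follows essentially the same route as the paper: substitute Lemma \ref{PhiLemma} into Lemma \ref{Lem1}, invoke Lemma \ref{Sum1} (together with Mertens), and bound the same three error contributions, with your treatment of the $\sum_{n^{1+a}\le x/y}yn^a/\log yn^a$ term identical to the paper's; the only differences are bookkeeping (you restrict to $n\le x$ and recover the full sum via tail estimates, while the paper applies the $\Phi$-asymptotic to all $n\in\mathcal{B}$ at once). The one step you flag as delicate — the $n<N_0$ part of the exponential error sum — is exactly where the paper's change of variables $v=(\log xy)/\log yt$ (equivalently your $v=v_w$) does the work, turning the integral into $\ll_a (\log xy)^{-(r+1)}(\log y)^{r}x\int_0^{u+1}v^{r}e^{-v/A}\,dv$ and thereby making your peak-and-width heuristic rigorous; note also that for $r=0$ (which the paper does use when bootstrapping) the hypothesis alone does not give $B_\theta(x)=o(x)$ for Lemma \ref{Sum1}, so that fact must be taken from elsewhere, as the paper itself tacitly does.
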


\begin{proof}
We substitute the estimate in Lemma \ref{PhiLemma} into Lemma \ref{Lem1}. With Lemma \ref{Sum1}, this yields
\begin{multline*}
B(x) = x\sum_{n\in \mathcal{B}}\frac{1}{n \log \theta(n)} \left( e^{-\gamma} - \omega\left(\frac{\log x/n}{\log \theta(n)}\right)\right)  \\
 [x]-x +\sum_{n\in \mathcal{B} \atop n\theta(n) \le x} \frac{\theta(n)}{\log \theta(n)}
+O\left(\sum_{n\in \mathcal{B}} \frac{x}{n \log^2 \theta(n)} \exp\left( -\frac{\log x/n}{3\log \theta(n)}\right)\right).
\end{multline*}

We have $[x]-x=O(1)$ and
\begin{equation*}
\sum_{n\in \mathcal{B} \atop n^{1+a}\le x/y} \frac{ y n^a}{\log yn^a} 
 \ll \frac{y (x/y)^\frac{a}{1+a}}{\log \left( y (x/y)^\frac{a}{1+a}\right) }B((x/y)^{\frac{1}{1+a}})
\ll_a \frac{x (\log y)^r}{(\log xy)^{r+1}}.
\end{equation*}
Let $A:=3\max(1,a)$. Partial summation yields
\begin{equation*}
\begin{split}
\sum_{n\in \mathcal{B} }\frac{x \exp\left(-\frac{\log x/n}{3\log yn^a}\right) }{n (\log yn^a)^2} 
 & \ll_a \int_{ 1}^\infty \frac{x (\log y)^r \exp\left(-\frac{\log xy}{A\log yt}\right)}{t (\log yt)^{2+r}}dt  
 \ll \frac{x(\log y)^r}{(\log xy)^{1+r}},
\end{split}
\end{equation*}
where the last estimate follows from the change of variables $v=(\log xy)/\log yt$. 
\end{proof}

\begin{lemma}\label{lem4}
With the notation and assumptions of Lemma \ref{LemFinalSum}, we have
\begin{equation*}
B(x)= x\int_{1}^{\infty} \frac{B(t)}{ t^2 \log y t^a} 
\left( e^{-\gamma} - \omega\left(\frac{\log x/t}{\log y t^a}\right)\right) \, dt
 + O_a\left(1+\frac{x (\log y)^r}{(\log x y)^{1+r}}\right),
\end{equation*}
uniformly for $x\ge 1$, $y\ge 2$. 
\end{lemma}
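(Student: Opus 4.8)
The plan is to pass from the discrete sum over $n \in \mathcal{B}$ in Lemma \ref{LemFinalSum} to the integral over $t$ by recognizing that the summand is essentially a smooth function of $n$ weighted against the ``density'' of $\mathcal{B}$, which is encoded by $B(t)$ itself via $dB(t)$. Concretely, write
$$
B(x) = x\sum_{n\in \mathcal{B}} f_x(n) + O_a\left(1+\frac{x(\log y)^r}{(\log xy)^{r+1}}\right), \qquad f_x(t) := \frac{1}{t\log yt^a}\left(e^{-\gamma} - \omega\left(\frac{\log x/t}{\log yt^a}\right)\right),
$$
and observe that $\sum_{n \in \mathcal{B}} f_x(n) = \int_1^\infty f_x(t)\, dB(t)$. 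Integration by parts turns this into $\int_1^\infty B(t) \bigl(-f_x'(t)\bigr)\, dt$ plus boundary terms; the boundary term at $t=1$ vanishes (since $B(1) = 1$ and we can absorb the constant into the $O_a(1)$, or note $f_x(1)$ contributes a bounded amount because $\omega$ is bounded and $B(1)=1$), and the contribution near $t = x$ is controlled since $f_x(t) \to 0$ as $t \to x$ (the argument of $\omega$ tends to $0$ where $\omega(v) \equiv 1$... actually $e^{-\gamma} - \omega$ does not vanish, so one must be slightly careful — but $B(t)/t^2 \cdot t = B(t)/t \ll 1/(u_t+1)^r$ and the range $t \asymp x$ contributes $O(1)$ after a short computation). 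Then one must identify $-f_x'(t)$ with $B(t)/(t^2 \log yt^a)(e^{-\gamma} - \omega(\ldots))$ up to acceptable error; but this is not literally true, so the cleaner route is as follows.

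First I would instead reverse the roles: the claimed identity says $B(x)$ equals an integral \emph{of $B(t)$ against a fixed kernel}, which is a renewal-type equation, and the natural way to prove it is to start from Lemma \ref{LemFinalSum} and replace the sum $\sum_{n \in \mathcal{B}} g(n)$ by $\int_1^\infty g(t)\, dB(t)$ \emph{exactly} (this is just the definition of the Stieltjes integral, since $B$ is a step function counting $\mathcal{B}$), and then integrate by parts. Writing $g(t) = \frac{1}{t\log yt^a}(e^{-\gamma} - \omega(\frac{\log x/t}{\log yt^a}))$ for $1 \le t \le x$ and $g(t) = 0$ for $t > x$ (the summand in Lemma \ref{LemFinalSum} is only nonzero for $n \le x$ since $\omega(v) = e^{-\gamma}$... no — one should check that for $n$ close to $x$, say $n > x/y$, the term $\omega(\frac{\log x/n}{\log yn^a})$ has argument in $[0,1)$ where $\omega \equiv 1$, and $e^{-\gamma} - 1 \ne 0$; so $g$ does not have compact support and I will need to truncate at $t = x$, picking up a boundary term $B(x) g(x)$, but $g(x) = \frac{1}{x \log yx^a}(e^{-\gamma}-1) = O_a(1/(x\log xy))$, so $B(x)g(x) = O_a(B(x)/(x\log xy)) = O_a(1/\log xy)$, absorbed). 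Integration by parts gives
$$
\sum_{n \in \mathcal{B}, n \le x} g(n) = \int_{1^-}^{x} g(t)\, dB(t) = B(x)g(x) - B(1)g(1) - \int_1^x B(t) g'(t)\, dt.
$$
Now $B(1)g(1) = g(1) = O_a(1/\log y)$, which is $O_a(1)$ and absorbed. Multiplying by $x$: the term $x \int_1^x B(t)(-g'(t))\,dt$ must be shown to equal the claimed integral plus error.

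The heart of the matter, and the step I expect to be the main obstacle, is computing $g'(t)$ and showing that $-g'(t) = \frac{1}{t^2 \log yt^a}(e^{-\gamma} - \omega(\frac{\log x/t}{\log yt^a})) + (\text{error terms whose integral against } x B(t) \text{ is acceptable})$. Differentiating $g(t) = \frac{1}{t \log yt^a} h(t)$ where $h(t) = e^{-\gamma} - \omega(\frac{\log x/t}{\log yt^a})$ produces, by the product and chain rules, three pieces: (a) $-\frac{1}{t^2 \log yt^a} h(t)$, which is exactly the desired main term $-(\text{kernel})$ — good; (b) $-\frac{a}{t^2 (\log yt^a)^2} h(t)$ from differentiating $\log yt^a = \log y + a\log t$ in the denominator; and (c) $-\frac{1}{t\log yt^a}\cdot \omega'(\frac{\log x/t}{\log yt^a}) \cdot \frac{d}{dt}\frac{\log x/t}{\log yt^a}$, where the last derivative is $O(1/(t\log yt^a))$ plus lower order, so (c) is $O\bigl(\frac{|\omega'(\cdots)|}{t^2(\log yt^a)^2}\bigr)$. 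For both (b) and (c) I must bound $x\int_1^x \frac{B(t)}{t^2(\log yt^a)^2}(\text{bounded})\,dt \ll_a x\int_1^x \frac{1}{t(u_t+1)^r (\log yt)^2}\,dt$ where $u_t = \log t/\log y$, and after the substitution $v = (\log xy)/(\log yt)$ (exactly as in the last display of the proof of Lemma \ref{LemFinalSum}) this is $\ll_a \frac{x(\log y)^r}{(\log xy)^{r+1}}$, which matches the stated error term; for (c) the extra decay of $\omega'$ only helps. Thus I would execute: (1) rewrite the sum in Lemma \ref{LemFinalSum} as a Stieltjes integral; (2) integrate by parts, noting the boundary terms are $O_a(1 + 1/\log xy)$; (3) differentiate the kernel via product/chain rule, isolating the main term (a) and error terms (b), (c); (4) bound the contributions of (b) and (c) by the same change-of-variables estimate used in Lemma \ref{LemFinalSum}, using the hypothesis $B(t) \ll t/(u_t+1)^r$; (5) collect everything to obtain the claimed identity with error $O_a(1 + x(\log y)^r/(\log xy)^{1+r})$ uniformly for $x \ge 1$, $y \ge 2$. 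A minor care point throughout is the behaviour near $t=1$ and the non-differentiability of $\omega$ at integers, but since $\omega$ is piecewise smooth and $B(t)$ is of bounded variation, the integration by parts and all estimates go through on each smooth piece and sum up without trouble.
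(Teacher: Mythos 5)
Your overall route — rewrite the sum of Lemma \ref{LemFinalSum} as a Stieltjes integral against $dB(t)$, integrate by parts, and differentiate the kernel — is exactly the paper's intended argument (its proof of Lemma \ref{lem4} is literally ``apply partial summation to the sum in Lemma \ref{LemFinalSum}''). However, two steps in your execution are wrong as written. First, the truncation at $t=x$: the sum in Lemma \ref{LemFinalSum} runs over \emph{all} $n\in\mathcal{B}$, and for $n>x$ the summand does not vanish — the argument of $\omega$ is negative, where $\omega\equiv 0$ (not $1$; on $[0,1)$ the convention in force is $\omega=0$, as Lemma \ref{PhiLemma} requires), so the summand equals $e^{-\gamma}/(n\log yn^a)$. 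Correspondingly, the lemma's integral is over $[1,\infty)$, not $[1,x]$. By restricting the sum to $n\le x$ and the integral to $[1,x]$ you discard $x\int_x^\infty e^{-\gamma}B(t)/(t^2\log yt^a)\,dt$, which under $B(t)\ll t/(u_t+1)^r$ has order $x\bigl(\log y/\log xy\bigr)^r$ — main-term size (comparable to $x\rho_a(u)$ once $r=\lambda_a$), far larger than the allowed $O_a\bigl(1+x(\log y)^r/(\log xy)^{1+r}\bigr)$; this tail is also what feeds the constant $\alpha_y$ in the subsequent proof of Theorem \ref{thmBaxy}. The cure is simply not to truncate: integrate by parts over all of $[1,\infty)$ (the boundary term at infinity vanishes since $B(t)g(t)\ll 1/\log(yt)\to 0$), and the $n>x$ part of the sum produces precisely the $t>x$ part of the integral.

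Second, your step (4) bound is false: $x\int_1^x \frac{B(t)}{t^2(\log yt^a)^2}\,O(1)\,dt$ is \emph{not} $\ll x(\log y)^r/(\log xy)^{1+r}$; with $B(t)\ll t\,(\log y/\log yt)^r$ the integral is dominated by $t$ near $1$ and is of size $x/\log y$, which exceeds the target by a factor $\asymp (1+u)^{1+r}$. The change-of-variables estimate at the end of the proof of Lemma \ref{LemFinalSum} works only because of the decay factor $\exp\bigl(-\frac{\log x/t}{3\log yt^a}\bigr)$, which your reduction to ``bounded'' discards. You must keep the decay of the kernel: $|e^{-\gamma}-\omega(v)|\ll e^{-v}$ for the term (b), and for (c) note that $\frac{d}{dt}\frac{\log x/t}{\log yt^a}=-\frac{\log (yx^a)}{t(\log yt^a)^2}=-\frac{1+av}{t\log yt^a}$ (not $O(1/(t\log yt^a))$), so the relevant factor is $|\omega'(v)|(1+av)\ll e^{-v}$. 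With these decay bounds in place, the error terms have exactly the shape estimated in the paper's proof of Lemma \ref{LemFinalSum}, and the same substitution $v=(\log xy)/\log yt$ gives the stated $O_a\bigl(x(\log y)^r/(\log xy)^{1+r}\bigr)$. (The jump of $\omega$ at argument $1$, at $t=(x/y)^{1/(1+a)}$, contributes $\ll x(\log y)^r/(\log xy)^{1+r}$ and is harmless, but it deserves a word rather than the blanket ``sums up without trouble''.) With these two repairs your argument becomes the paper's proof.
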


\begin{proof}
This result follows from applying partial summation to the sum in Lemma \ref{LemFinalSum}. 
\end{proof}

\begin{proof}[Proof of the first estimate in Theorem \ref{thmBaxy}]
From Lemma \ref{lem4} we have, for $x\ge 1$, $y\ge 2$, 
\begin{equation}\label{inteq}
B(x)= x \, \alpha_y - x \int_{1}^{\infty} \frac{B(t)}{t^2 \log y t^a} 
\, \omega\left(\frac{\log x/t}{\log y t^a}\right) d t
 +  O\left(1+\frac{x (\log y)^r}{(\log xy)^{1+r}}\right),
\end{equation}
where
$$ \alpha_y:=e^{-\gamma} \int_{1}^{\infty} \frac{B(t)}{t^2 \log y t^a} \, d t . $$
Lemma \ref{Sum1} and partial summation shows that 
\begin{equation}\label{alphaestimate}
\alpha_y \ll 1.
\end{equation}
For $x\ge 1$, $t\ge 1$, define $z\ge 0, v\ge 0$ by  
$$e^z=\frac{\log (y x^a)}{\log y}, \quad e^v =\frac{\log(y t^a)}{\log y} $$
and let
$$ G_y(z) := \frac{B(x)}{x} .$$
Dividing \eqref{inteq} by $x$ and changing variables in the integral we get, for $z\ge 0$,
\begin{equation}\label{conv}
\begin{split}
G_y(z) & =  \alpha_y - \frac{1}{a} \int_{0}^{z} G_y(v) \, \omega\left((e^{z-v} -1)/a\right) \, d v +E_y(z) \\
       & = \alpha_y -\frac{1}{a}  \int_{0}^{z} G_y(v) \, \Omega_a(z-v)  \, d v +E_y(z), 
\end{split} 
\end{equation}
where 
\begin{equation}\label{Error}
E_y(z) \ll  y^{-(e^z-1)/a}+e^{-(1+r)z} (\log y)^{-1} \ll e^{-(1+r)z}
\end{equation}
and
$$ \Omega_a(v):= \omega\left((e^{v} -1)/a\right) . $$
Now multiply \eqref{conv} by $e^{-zs}$, where $s\in \mathbb{C}$, $\re s > 0$, and integrate over $z\ge 0$ to obtain the equation of Laplace transforms
$$
 \widehat{G}_y(s) = \frac{\alpha_y}{s} -\frac{1}{a} \widehat{G}_y(s) \, \widehat{\Omega}_a(s) + \widehat{E}_y(s) \qquad (\re s >0).
$$
Solving for $\widehat{G}_y(s)$, we get
\begin{equation*}\label{LaplaceEq}
\widehat{G}_y(s) = \frac{\alpha_y}{s (1+\frac{1}{a}\widehat{\Omega}_a(s))} + \frac{\widehat{E}_y(s)}{1+\frac{1}{a}\widehat{\Omega}_a(s)}
\qquad (\re s >0).
\end{equation*}
By equation \eqref{FaLap},
\begin{equation}\label{LaplaceEq2}
\widehat{G}_y(s) =  \alpha_y \widehat{F}_a(s) + s  \widehat{F}_a(s)\widehat{E}_y(s)
\qquad (\re s >0).
\end{equation}

Since $\widehat{F'_a}(s)=s\widehat{F}_a(s)-F_a(0)$ and $F_a(0)=1$, this yields
$$    \widehat{G}_y(s) = \alpha_y \widehat{F}_a(s) + \widehat{E}_y(s) +\widehat{F'_a}(s) \widehat{E}_y(s),$$
and thus
\begin{equation}\label{last}
G_y(z)= \alpha_y F_a(z) + E_y(z) + \int_0^z F'_a(z-v) E_y(v) dv.
\end{equation}
With Lemma \ref{lemag0} and estimates \eqref{hypeq2}, \eqref{alphaestimate} and \eqref{Error}, equation \eqref{last} yields
$$
G_y(z) \ll_a e^{-\lambda_a z} + e^{-(1+r)z} + \int_0^z e^{-\lambda_a(z-v)} e^{-(1+r)v} dv.
$$
Since $B(x) \le x$, $G_y(z) \le 1$ and $r=0$ is admissible, starting with Lemma \ref{LemFinalSum}. 
Applying the preceding argument repeatedly shows that 
$$
G_y(z) \ll_a e^{-\lambda_a z},
$$
so that $r=\lambda_a$ is admissible. 
For the remainder of the proof we choose $r=\lambda_a$ and assume that $x\ge y$, that is $e^z \ge 1+a$. 
(This assumption was 
not permitted earlier because we integrated over $z\ge 0$ when finding Laplace transforms.)

We estimate $F_a(z)$ and $F'_a(z-v)$ with \eqref{hypeq1} and \eqref{hypeq2} of Lemma \ref{lemhypeq}, and estimate $E_y(z)$ and $E_y(v)$ with
 the first upper bound of \eqref{Error}.
The contribution to the integral in \eqref{last} from the main term in \eqref{hypeq2} is
\begin{equation*}
\begin{split}
& \int_0^z -\lambda_a C_a  e^{-\lambda_a (z-v)} E_y(v) dv \\
&=  -\lambda_a C_a   e^{-\lambda_a z} \left(  \int_0^\infty e^{\lambda_a v} E_y(v) dv 
+ O\left( \int_z^\infty e^{\lambda_a v}E_y(v) dv \right) \right) \\
&= -\lambda_a C_a     e^{-\lambda_a z}\beta_y + O\left(e^{-z(\lambda_a+1)}(\log y)^{-1})\right) \\
& = -\lambda_a \beta_y F_a(z) + O(e^{-\mu_a z}\beta_y) + O\left(e^{-z(\lambda_a+1)}(\log y)^{-1}\right),
\end{split}
\end{equation*}
by \eqref{hypeq1}, where
 $$\beta_y :=  \int_0^\infty e^{\lambda_a v} E_y(v) dv  \ll 1/\log y . $$
The contribution from the error term in \eqref{hypeq2} to the integral in \eqref{last} is
$$
\ll \int_0^z e^{-\mu_a(z-v)} E_y(v) dv 
\ll e^{-\min(\mu_a,1+\lambda_a) z}  (\log y)^{-1} ,
$$
provided $\mu_a \neq 1+\lambda_a$. If  $\mu_a = 1+\lambda_a$, the error term needs an extra factor of $z \asymp_a \log u$.
Hence \eqref{last} implies 
$$ G_y(z)= (\alpha_y - \lambda_a \beta_y) F_a(z) + O(e^{-\min(\mu_a,1+\lambda_a) z}  (\log y)^{-1} ).
$$
Writing
$$
\eta_{a,y}:=\alpha_y - \lambda_a \beta_y,
$$
we obtain
$$
B(x) = B_a(x,y) = x\eta_{a,y} \rho_a(u) + O(x u^ {-\min(\mu_a,1+\lambda_a) }(\log y)^{-1} ).
$$
When $y=x$, we have $B_a(x,y)= \lfloor x \rfloor$ and $\rho_a(u)=\rho_a(1)=1$. Thus the last display implies that
$$
\eta_{a,y}=1+O(1/\log y).
$$
This completes the proof of the first estimate in Theorem \ref{thmBaxy}.
\end{proof}

\begin{proof}[Proof of the second estimate in Theorem \ref{thmBaxy} (the squarefree case)]
Define
$$
\Phi_0(x,y) =| \{1\le n \le x: P^-(n)>y \text{ and } \mu^2(n)=1\}|.
$$
The number of squarefree integers up to $x$ is (see \cite[Exercise 44]{Ten})
\begin{equation}\label{SFQ}
\Phi_0(x,1) = \frac{x}{\zeta(2)}+O(\sqrt{x}).
\end{equation}
Let $\tilde{\mathcal{B}}_\theta$ be the set of squarefree integers in $\mathcal{B}_\theta$ and let $\tilde{B}_\theta(x)$ be the corresponding counting function. 
The following three lemmas are the squarefree analogues of Lemmas \ref{Lem1}, \ref{Sum1} and \ref{PhiLemma}.
Different versions of these lemmas appear in \cite[Lemmas 2.2, 2.3, 3.3, 3.5]{PTW}.
\begin{lemma}\label{Lem1sf}
Assume $\theta$ satisfies \eqref{thetadef}. For $x\ge 0$, 
$$
\Phi_0(x,1) = \sum_{n \in \tilde{\mathcal{B}}_\theta} \Phi_0(x/n, \theta(n)).
$$
\end{lemma}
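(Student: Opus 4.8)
The plan is to deduce the identity from a unique‑factorization bijection for squarefree integers, exactly parallel to the proof of Lemma~\ref{Lem1} (that is, \cite[Lemma 3]{SPA}), but carrying the squarefreeness condition along throughout. Precisely, I would show that every squarefree integer $m$ has a \emph{unique} representation $m=nk$ with $n\in\tilde{\mathcal{B}}_\theta$, with $k$ squarefree, and with $P^-(k)>\theta(n)$; summing over $m\le x$ then gives the stated formula.

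First I would record two elementary facts about $\mathcal{B}_\theta$, valid under hypothesis \eqref{thetadef}. (a) The set $\mathcal{B}_\theta$ is closed under passing to ``prefixes'': if $p_1\le\cdots\le p_k\in\mathcal{B}_\theta$, then $p_1\cdots p_\ell\in\mathcal{B}_\theta$ for every $0\le\ell\le k$, directly from the definition of $\mathcal{B}_\theta$. (b) As already observed in the discussion preceding Lemma~\ref{Lem1}, if $n\in\mathcal{B}_\theta$ then $\theta(n)\ge P^+(n)$. Both are immediate.

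For existence, given squarefree $m$ with distinct prime factors $q_1<\cdots<q_r$, I would let $j$ be the largest index (possibly $0$) for which $q_1\cdots q_j\in\mathcal{B}_\theta$ and put $n=q_1\cdots q_j$, $k=q_{j+1}\cdots q_r$. Then $n$ is squarefree and lies in $\mathcal{B}_\theta$, so $n\in\tilde{\mathcal{B}}_\theta$; $k$ is squarefree; and since appending $q_{j+1}$ to $n$ in increasing order must leave $\mathcal{B}_\theta$, the only condition that can fail is $q_{j+1}\le\theta(n)$, i.e.\ $P^-(k)=q_{j+1}>\theta(n)$ (the case $k=1$ being handled by the convention $P^-(1)=\infty$). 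For uniqueness, if $m=n'k'$ is any representation of the stated form, then every prime dividing $n'$ is $\le P^+(n')\le\theta(n')<P^-(k')$ by (b), so $n'$ and $k'$ are coprime and the primes of $n'$ form an initial segment $q_1,\dots,q_{j'}$ of the sorted list; since $n'\in\mathcal{B}_\theta$ we get $j'\le j$, and if $j'<j$ then (a) gives $q_1\cdots q_{j'+1}\in\mathcal{B}_\theta$, hence $q_{j'+1}\le\theta(n')$, contradicting $q_{j'+1}=P^-(k')>\theta(n')$. Thus $j'=j$, so $n'=n$ and $k'=k$.

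Finally I would sum over squarefree $m\le x$, grouping by the factor $n\in\tilde{\mathcal{B}}_\theta$. Given such an $n$, the integers $m=nk\le x$ that arise correspond exactly to the squarefree $k\le x/n$ with $P^-(k)>\theta(n)$, and conversely every such $k$ yields a squarefree $m=nk$ automatically, because $n$ and $k$ are squarefree and coprime; this group therefore contributes $\Phi_0(x/n,\theta(n))$. Summing gives $\Phi_0(x,1)=\sum_{n\in\tilde{\mathcal{B}}_\theta}\Phi_0(x/n,\theta(n))$, with the range $0\le x<1$ trivial since both sides vanish. The step I expect to require the most care is the uniqueness half of the bijection; it is purely combinatorial, relying only on the monotonicity in \eqref{thetadef} through facts (a) and (b), with no analytic content (compare \cite{PTW}).
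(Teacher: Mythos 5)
Your proposal is correct and follows essentially the same route as the paper, whose entire proof is the one-line observation that each squarefree $m\le x$ factors uniquely as $m=nr$ with $n\in\tilde{\mathcal{B}}_\theta$, $\mu^2(r)=1$ and $P^-(r)>\theta(n)$. Your argument simply supplies the existence/uniqueness details of this factorization (via maximal prefixes and the monotonicity in \eqref{thetadef}) that the paper leaves implicit.
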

\begin{proof}
Each $m$ counted in $\Phi_0(x,1)$ factors uniquely as $m=nr$, where $n\in \tilde{\mathcal{B}}_\theta$, $P^-(r)>\theta(n)$, $\mu^2(r)=1$ and $r\le x/n$. 
\end{proof}

\begin{lemma}\label{Sum1sf}
Assume $\theta$ satisfies \eqref{thetadef}.
If $\tilde{B}_\theta(x)=o(x)$, then 
$$ 1=\sum_{n \in \tilde{\mathcal{B}}_\theta} \frac{1}{n} \prod_{p\le \theta(n)} \left(1+\frac{1}{p}\right)^{-1}.$$
\end{lemma}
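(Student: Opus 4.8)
\textbf{Proof plan for Lemma \ref{Sum1sf}.} The plan is to mimic the proof of Lemma \ref{Sum1} (i.e. \cite[Theorem 1]{SPA}), starting from the squarefree counting identity in Lemma \ref{Lem1sf}. First I would divide the identity
$$
\Phi_0(x,1) = \sum_{n \in \tilde{\mathcal{B}}_\theta} \Phi_0(x/n, \theta(n))
$$
by $x$ and let $x\to\infty$. By \eqref{SFQ} the left side is $\frac{1}{\zeta(2)}+O(x^{-1/2})$, so it suffices to understand the limiting behaviour of $\frac{1}{x}\sum_{n} \Phi_0(x/n,\theta(n))$. For each fixed $n$, the inner factor counts squarefree integers free of prime factors $\le \theta(n)$, and a standard sieve/Mertens argument gives
$$
\Phi_0(t,w) = t\prod_{p\le w}\left(1+\frac1p\right)^{-1} + O\!\left(\frac{t}{\log 2w}\right) + O(\sqrt{t})
$$
uniformly (this is the squarefree analogue of Lemma \ref{PhiLemma}; one can also cite \cite[Lemmas 3.3, 3.5]{PTW}), since $\prod_{p\le w}(1-1/p)(1+1/p)^{-1}=\prod_{p\le w}(1-p^{-2})\to 1/\zeta(2)$ reconciles this with \eqref{SFQ} when $w=1$. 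Hence termwise,
$$
\lim_{x\to\infty}\frac1x \Phi_0(x/n,\theta(n)) = \frac1n\prod_{p\le \theta(n)}\left(1+\frac1p\right)^{-1}.
$$

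The substantive step is to justify interchanging the limit with the (infinite) sum over $n\in\tilde{\mathcal{B}}_\theta$. Here I would use the hypothesis $\tilde B_\theta(x)=o(x)$ exactly as in \cite[Theorem 1]{SPA}: split the sum at $n\le x^{1-\delta}$ and $n>x^{1-\delta}$. For the tail, each term is $\le \Phi_0(x/n,1)\le x/n$, and $\sum_{x^{1-\delta}<n\le x,\, n\in\tilde{\mathcal B}_\theta} 1/n$ can be bounded by partial summation using $\tilde B_\theta(t)=o(t)$, making the tail contribution to $\frac1x\sum$ negligible as $\delta\to 0$ after $x\to\infty$; simultaneously the tail of $\sum_n \frac1n\prod_{p\le\theta(n)}(1+1/p)^{-1}$ is bounded by the same quantity (since $\prod(1+1/p)^{-1}\le 1$), so it too vanishes. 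For the head $n\le x^{1-\delta}$ we have $x/n\ge x^\delta\to\infty$, so the uniform asymptotic for $\Phi_0$ applies with a uniform error, and dominated convergence (with dominating series the convergent $\sum_{n\in\tilde{\mathcal B}_\theta} 1/n$ restricted suitably — or more simply a direct estimate term by term since only finitely many $n$ contribute up to any fixed bound) lets us pass to the limit. Combining the head and tail analyses yields
$$
\frac{1}{\zeta(2)} = \sum_{n\in\tilde{\mathcal{B}}_\theta}\frac1n\prod_{p\le\theta(n)}\left(1+\frac1p\right)^{-1}\cdot\frac{1}{\zeta(2)}\cdot\zeta(2),
$$
wait — more carefully, the $1/\zeta(2)$ factors cancel: writing $\prod_{p\le\theta(n)}(1+1/p)^{-1} = \frac{1}{\zeta(2)}\prod_{p>\theta(n)}(1-p^{-2})^{-1}\cdot(\text{correction})$ is the wrong bookkeeping; instead one simply notes that the limit of the left side is $1/\zeta(2)$ and the limit of the right side is $\sum_n \frac1n\prod_{p\le\theta(n)}(1+1/p)^{-1}$, but \eqref{SFQ} was already used so that $\Phi_0(x/n,\theta(n))/x \to \frac1n\prod_{p\le\theta(n)}(1+1/p)^{-1}\cdot\frac{1}{\zeta(2)}\cdot\zeta(2)$; cleanest is to divide by $\Phi_0(x,1)$ rather than by $x$, so that the left side is identically $1$ and the inner quotient $\Phi_0(x/n,\theta(n))/\Phi_0(x,1) \to \frac1n\prod_{p\le\theta(n)}(1+1/p)^{-1}$, giving the claimed identity directly.

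\textbf{Main obstacle.} The only delicate point is the exchange of limit and summation, i.e. controlling the tail $n>x^{1-\delta}$; this is precisely where the hypothesis $\tilde B_\theta(x)=o(x)$ enters, and the argument is the verbatim squarefree transcription of the one in \cite[Theorem 1]{SPA}, using \eqref{SFQ} in place of $[x]=x+O(1)$ and the Euler product $\prod_p(1-p^{-2})^{-1}=\zeta(2)$ to identify the constant. All remaining steps — the uniform estimate for $\Phi_0(t,w)$ and the termwise limit — are routine.
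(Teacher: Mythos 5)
Your overall skeleton (start from Lemma \ref{Lem1sf}, divide by $\Phi_0(x,1)$, take termwise limits, and justify the interchange using $\tilde{B}_\theta(x)=o(x)$) is indeed the paper's route, and your self-correction of the $1/\zeta(2)$ bookkeeping lands in the right place. But the one step you yourself flag as substantive --- the tail bound --- does not work as you set it up. You bound each tail term by $\Phi_0(x/n,1)\le x/n$ and then estimate $\sum_{x^{1-\delta}<n\le x,\,n\in\tilde{\mathcal{B}}_\theta}1/n$ by partial summation from $\tilde{B}_\theta(t)=o(t)$. That partial summation only yields a bound of shape $\sup_{t\ge x^{1-\delta}}\bigl(\tilde{B}_\theta(t)/t\bigr)\cdot\bigl(O(1)+\delta\log x\bigr)$, and since the hypothesis carries no rate, this need not tend to $0$ (take $\tilde{B}_\theta(t)\asymp t/\log\log t$, or already the motivating case $\theta(n)=yn^a$ with $a\ge 1$, where the density is $\asymp(\log t)^{-\lambda_a}$ with $\lambda_a\le 1$). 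If instead you split at a fixed $N$, the trivial bound leads to $\sum_{n>N,\,n\in\tilde{\mathcal{B}}_\theta}1/n$, a series that typically diverges; so no choice of cut rescues the argument. The bound $x/n$ simply discards the crucial factor $1/\log\theta(n)$.

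The missing ingredient is exactly the one the paper's proof lists: the upper bound $\Phi_0(x,y)\le 1+x/\log y$ from Lemma \ref{Phi0Lemma}. Write $\sum_{n\le x}\Phi_0(x/n,\theta(n))=\tilde{B}_\theta(x)+\sum_{n\le x}\bigl(\Phi_0(x/n,\theta(n))-1\bigr)$; the hypothesis $\tilde{B}_\theta(x)=o(x)$ is used precisely to discard the first term, while each summand of the second lies in $[0,\,x/(n\log\theta(n))]$. The dominating series $\sum_{n\in\tilde{\mathcal{B}}_\theta}1/(n\log\theta(n))$ converges a priori: Fatou applied to the same identity gives $\sum_n\frac1n\prod_{p\le\theta(n)}(1+1/p)^{-1}\le 1$ (this is your easy ``$\le$'' direction), and by Mertens $\prod_{p\le\theta(n)}(1+1/p)^{-1}\asymp 1/\log\theta(n)$, so summability follows. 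Dominated convergence then legitimizes the termwise passage to the limit with no $x^{1-\delta}$ split at all and yields the identity. (Minor additional slip: your displayed uniform estimate for $\Phi_0(t,w)$ omits the factor $1/\zeta(2)$ in the main term; compare Lemma \ref{Phi0Lemma}. It does not affect your final bookkeeping, since you divide by $\Phi_0(x,1)$, but as written your termwise limit $\frac1x\Phi_0(x/n,\theta(n))\to\frac1n\prod_{p\le\theta(n)}(1+1/p)^{-1}$ is off by that factor.)
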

\begin{proof}
This follows from Lemma \ref{Lem1sf}, the estimate \eqref{SFQ}, the upper bound (see Lemma \ref{Phi0Lemma})
$\Phi_0(x,y) \le 1+x/\log y$ for $x\ge 1$, $y\ge 2$, and (see Lemma \ref{Phi0Lemma}) for fixed $y$,
$$
\lim_{x\to \infty}\Phi_0(x,y)/x = \frac{1}{\zeta(2)}\prod_{p\le y} \left(1+\frac{1}{p}\right)^{-1}.
$$
\end{proof}

\begin{lemma}\label{Phi0Lemma}
Uniformly for $x>0$, $y\ge 2$, $u=\frac{\log x}{\log y}$,
\begin{multline*}
\Phi_0(x,y) = \\ 1_{x\ge 1}+ \frac{x}{\zeta(2)} \prod_{p\le y}\left(1+\frac{1}{p}\right)^{-1} 
+\frac{x}{\log y} \left\{\omega(u)-e^{-\gamma} -\left.\frac{y}{x}\right|_{x\ge y} 
+ O\left(\frac{e^{-u/3}}{\log y}\right)\right\}.
\end{multline*}
\end{lemma}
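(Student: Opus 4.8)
The plan is to deduce the estimate for $\Phi_0(x,y)$ from the estimate for $\Phi(x,y)$ in Lemma \ref{PhiLemma}, via the identity $\mu^2(n)=\sum_{d^2\mid n}\mu(d)$. Writing $P^-(\cdot)>y$ throughout for the condition of having no prime factor $\le y$, this gives
$$
\Phi_0(x,y)=\sum_{d\le\sqrt x,\ P^-(d)>y}\mu(d)\,\Phi(x/d^2,y).
$$
If $0<x<y$, only $n=1$ contributes to each side, so $\Phi_0(x,y)=\Phi(x,y)$; the main terms of the two claimed formulas, $x\prod_{p\le y}(1-1/p)$ and $\frac{x}{\zeta(2)}\prod_{p\le y}(1+1/p)^{-1}$, differ by $x\prod_{p\le y}(1-1/p)\bigl(1-\prod_{p>y}(1-1/p^2)\bigr)\ll x/y$, which lies inside the error term in this range, so the claim here follows from Lemma \ref{PhiLemma}. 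Hence I may assume $x\ge y\ge 2$.

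For $x\ge y$, the term $d=1$ contributes $\Phi(x,y)$; by Lemma \ref{PhiLemma} this equals $1+x\prod_{p\le y}(1-1/p)+\frac{x}{\log y}(\omega(u)-e^{-\gamma}-y/x)+O(xe^{-u/3}/(\log y)^2)$, which already supplies the terms $1$ and $\frac{x}{\log y}(\omega(u)-e^{-\gamma}-y/x)$, an admissible error, and one spurious term $x\prod_{p\le y}(1-1/p)$. For $d\ge2$ (which forces $d>y$) I split according to the size of $x/d^2$: when $x/d^2<y$ one has $\Phi(x/d^2,y)=1$ exactly, and the total of these terms is $\ll 1+\sqrt x/\log y$; when $x/d^2\ge y$ one applies Lemma \ref{PhiLemma} again, and the leading part $\frac{x}{d^2}\prod_{p\le y}(1-1/p)$ of $\Phi(x/d^2,y)$, summed against $\mu(d)$ over all $d\ge2$ with $P^-(d)>y$, contributes $x\prod_{p\le y}(1-1/p)\bigl(\prod_{p>y}(1-1/p^2)-1\bigr)+O(\sqrt{xy}/\log y)$. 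Combined with the spurious $d=1$ term this collapses to $x\prod_{p\le y}(1-1/p)\prod_{p>y}(1-1/p^2)=\frac{x}{\zeta(2)}\prod_{p\le y}(1+1/p)^{-1}$ — exactly the desired main term — after which the two copies of $x\prod_{p\le y}(1-1/p)$ cancel.

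Everything not yet accounted for must then be shown to be $O(xe^{-u/3}/(\log y)^2)$: the constants $\mu(d)\cdot 1$ coming from Lemma \ref{PhiLemma}, the terms involving $\omega(u_d)-e^{-\gamma}$ and $yd^2/x$ with $u_d=\frac{\log(x/d^2)}{\log y}$, the truncation tails of the Euler products, and the error terms of Lemma \ref{PhiLemma} at argument $x/d^2$. For this I would use three ingredients: the standard sieve bound $\Phi(t,y)\ll 1+t/\log y$; the classical decay $\omega(v)-e^{-\gamma}\ll e^{-v/3}$ for $v\ge1$, which after writing $e^{-u_d/3}=e^{-u/3}d^{2/(3\log y)}$ gives $\sum_{d\ge2,\ P^-(d)>y}d^{-2}e^{-u_d/3}\ll e^{-u/3}/y$; and the observation that a sum over $d\ge2$ with $P^-(d)>y$ and $d\le\sqrt x$ (resp. $d\le\sqrt{x/y}$) is empty unless there is a prime in $(y,\sqrt x\,]$ (resp. $(y,\sqrt{x/y}\,]$), hence, by Bertrand's postulate, unless $x\gg y^2$ (resp. $x\gg y^3$); in those ranges one checks that $1+\sqrt x/\log y$, $\sqrt{xy}/(\log y)^2$ and $y/\log y$ are all $\ll xe^{-u/3}/(\log y)^2$, using $xe^{-u/3}=x^{1-1/(3\log y)}$ and $1-1/(3\log 2)>\tfrac12$. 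Collecting all the pieces and substituting the $d=1$ expansion gives, for $x\ge y$,
$$
\Phi_0(x,y)=1+\frac{x}{\zeta(2)}\prod_{p\le y}\Bigl(1+\frac1p\Bigr)^{-1}+\frac{x}{\log y}\Bigl(\omega(u)-e^{-\gamma}-\frac yx\Bigr)+O\Bigl(\frac{xe^{-u/3}}{(\log y)^2}\Bigr),
$$
which, on writing the error as $\frac{x}{\log y}O(e^{-u/3}/\log y)$ and noting that $1_{x\ge1}=1_{x\ge y}=1$ here, is the asserted estimate.

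I expect the main obstacle to be this last step: keeping the numerous secondary contributions of the terms $d\ge2$ organized and bounded uniformly, which relies on simultaneously exploiting the $1/y$-saving available in sums supported on $d>y$ and the fact that those sums vanish altogether unless $x$ exceeds the square (or cube) of $y$. By contrast, the extraction of the main term, resting only on the Euler-product identity $\prod_{p\le y}(1-1/p)\prod_{p>y}(1-1/p^2)=1/\bigl(\zeta(2)\prod_{p\le y}(1+1/p)\bigr)$, is routine. (An alternative would be to adapt the proof of the squarefree analogues in \cite[Lemmas 3.3, 3.5]{PTW} directly; the route above is shorter given that Lemma \ref{PhiLemma} is already in hand.)
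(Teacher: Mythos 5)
Your route differs from the paper's: the paper splits into the regimes $\log^2 y>\log x$ and $\log^2 y\le \log x$, handling the first by bounding $\Phi(x,y)-\Phi_0(x,y)\le\sum_{p>y}\lfloor x/p^2\rfloor\ll x/(y\log y)$ and comparing the two Euler products directly, and using the convolution $\Phi_0(x,y)=\sum_{P^-(d)>y}\mu(d)\Phi(x/d^2,y)$ only in the second regime, where $\sqrt{x}$-type losses are harmless. You instead run the convolution uniformly for all $x\ge y$. Most of your accounting is sound: the $\omega(u_d)-e^{-\gamma}$ and error terms via $e^{-u_d/3}=e^{-u/3}d^{2/(3\log y)}$ and summation over $d>y$, the count of $d$ with $x/d^2<y$, and the per-$d$ quantities $y/\log y$ coming from the $\left.\tfrac{y}{x/d^2}\right|_{x/d^2\ge y}$ term, which genuinely vanish unless $x\gg y^2$ resp.\ $x> y^3$, so restricting their verification to those ranges is legitimate.

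The gap is in the completion of $\sum_{2\le d\le\sqrt{x/y},\,P^-(d)>y}\mu(d)/d^2$ to the full product $\prod_{p>y}(1-1/p^2)-1$. The discarded tail does \emph{not} vanish when the truncated sum is empty, so the ``empty unless $x\gg y^3$'' observation cannot be used to skip estimating it for $y\le x\ll y^3$; and your stated bound $O(\sqrt{xy}/\log y)$ for this completion error is not $\ll x e^{-u/3}/\log^2 y$ throughout that range: at $x\asymp y$ with $y$ large it is $\asymp y/\log y$, exceeding the permitted $\asymp x/\log^2 y$ by a factor $\log y$, and the same problem persists up to $x\asymp y\log^2 y$ (and, depending on how you book the $y/\log y$ piece of the tail, up to $x\ll y^3$). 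The fix is short and uses a fact you already noted but did not exploit: every $d$ in the tail has $P^-(d)>y$, hence $d>y$, so the tail is $\ll 1/(M\log y)+1/M^2$ with $M=\max(\sqrt{x/y},\,y)$; for $x\le y^3$ this makes the completion error $\ll x/(y\log^2 y)$, which is admissible because there $u\le 3+O(1/\log y)$ and so $e^{-u/3}\asymp 1$. Equivalently, when the truncated sum is empty you can skip the completion altogether and absorb the discrepancy $x\prod_{p\le y}(1-1/p)\bigl(1-\prod_{p>y}(1-1/p^2)\bigr)\ll x/(y\log^2 y)$ directly, exactly as you did for $x<y$ --- which is in substance how the paper treats its first regime. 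With that patch, your single-range convolution argument does give the lemma uniformly.
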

\begin{proof}
We combine the ideas of the proofs of \cite[Lemmas 2.2 and 2.3]{PTW} with the estimate for $\Phi(x,y)$ in Lemma \ref{PhiLemma}:
When $\log^2 y > \log x$, we have
$$
0\le \Phi(x,y)-\Phi_0(x,y) \le \sum_{p>y}\sum_{n\le x \atop p^2|n}1 \le \sum_{p>y}\left\lfloor \frac{x}{p^2} \right\rfloor 
\ll \frac{x}{y \log y} \ll \frac{x e^{-u/3}}{\log^2 y}
$$
and
$$
0\le x \prod_{p\le y}\left(1-\frac{1}{p}\right) - \frac{x}{\zeta(2)} \prod_{p\le y}\left(1+\frac{1}{p}\right)^{-1} 
\ll \frac{x}{y \log^2 y} \ll \frac{x e^{-u/3}}{\log^2 y},
$$
so that the result follows from Lemma \ref{PhiLemma}.

When $\log^2 y \le \log x$, we write
$$
\Phi_0(x,y)=\sum_{n\le x \atop P^{-}(n) >y} \mu^2(n) = \sum_{n\le x \atop P^{-}(n)>y}\sum_{d^2|n}\mu(d) 
=\sum_{d\ge 1 \atop  P^{-}(d)>y}\mu(d) \Phi(x/d^2,y),
$$
and the result follows from inserting the estimate in Lemma \ref{PhiLemma} into the last sum. Indeed, the contribution from the term 
$1_{x\ge 1}$ in Lemma \ref{PhiLemma} is $\ll \sqrt{x} \ll x e^{-u/3}/\log^2 y$. 
The contribution from the term with the Euler product in  Lemma \ref{PhiLemma} is exactly the term with the Euler product in Lemma \ref{Phi0Lemma}.
The contribution from the remaining term in Lemma \ref{PhiLemma} and $d=1$ matches the remaining term in Lemma \ref{Phi0Lemma},
while $d>1$ (i.e. $d>y$) contributes $\ll  x e^{-u/3}/\log^2 y$, since $\omega(u)-e^{-\gamma} \ll e^{-u}$. 
\end{proof}

Inserting the estimate in Lemma \ref{Phi0Lemma} for $\Phi_0(x/n, y n^a)$ into Lemma \ref{Lem1sf}, and the estimate \eqref{SFQ} for
$\Phi_0(x,1)$, 
we find that Lemma \ref{LemFinalSum} holds with $B(x)$ replaced by $\tilde{B}(x)$, $\mathcal{B}$ replaced by $\tilde{\mathcal{B}}$, 
$r$ chosen as $\lambda_a$ (since $\tilde{B}(x) \le B(x) \ll x/u^{\lambda_a}$), and the error term $O(1)$ replaced by $O(\sqrt{x})$ (from \eqref{SFQ}). 

The remainder of the proof is the same as with $B(x)$, but with the new error term $O(\sqrt{x})$ replacing $O(1)$, which has no effect on the
final outcome. As in the case of $B(x)$, we arrive at
$$
\tilde{B}(x) = \tilde{B}_a(x,y) = x\tilde{\eta}_{a,y} \rho_a(u) + O(x u^ {-\min(\mu_a,1+\lambda_a) }(\log y)^{-1} ).
$$
When $y=x$, we have $\tilde{B}_a(x,y)= \Phi_0(x,1) = x/\zeta(2) +O(\sqrt{x})$ and $\rho_a(u)=\rho_a(1)=1$. Thus the last display implies that
$$
\tilde{\eta}_{a,y}=1/\zeta(2)+O(1/\log y).
$$
This completes the proof of Theorem \ref{thmBaxy}.
\end{proof}

\section{Proof of Theorem \ref{thmAbeta}}\label{secAbeta}

If $1\le x \le y$ and $y\ge 2$, then $B_a(x,y)=\lfloor x \rfloor \asymp x$.
Theorems \ref{rhoathm} and \ref{thmBaxyIntro} imply
$$
B_a(x,y) \asymp_a x \left(\frac{\log y}{\log xy} \right)^{\lambda_a} \qquad (x\ge 1, \ y\ge 2),
$$
while $B_{a}(x,y) =1_{x\ge 1}$ if $y<2$. 
Similarly, in the squarefree case, we have
$$
\tilde{B}_a(x,y) \asymp_a x \left(\frac{\log y}{\log xy} \right)^{\lambda_a} \qquad (x\ge 1, \ y\ge y_0(a)).
$$
For $x\ge 1$, $y\ge 1$, $y_0=y_0(1/\beta)$, we have
\begin{multline*}
\tilde{A}_\beta(y_0^\beta x, y) \ge |\{n\le x: F_\beta(n)\le y_0^\beta y n, \mu^2(n)=1\}| \\
=\tilde{B}_{1/\beta}(x,y_0 y^{1/\beta}) \asymp_\beta x  \left(\frac{\log 2y}{\log 2xy} \right)^{\lambda_{1/\beta}} ,
\end{multline*}
where the equality is due to \eqref{eqSSFcount}.
This implies the desired lower bound.

For the upper bound, we write
\begin{equation*}
\begin{split}
A_\beta(x,y) &= \sum_{k\ge 0 } |\{ x 2^{-k-1} < n \le x 2^{-k}: F_\beta(n) \le x y\}| \\
 & \le \sum_{k\ge 0 } |\{ x 2^{-k-1} < n \le x 2^{-k}: F_\beta(n) \le n y 2^{k+1}\}| \\
 & \le \sum_{k\ge 0 } B_{1/\beta}(x/2^k, (y 2^{k+1})^{1/\beta} ) \\
 & \ll_\beta \sum_{k\ge 0} \frac{x}{2^k} \left(\frac{\log y 2^{k+1} }{\log 2 xy}\right)^{\lambda_{1/\beta}}\\
  & \ll_\beta  x  \left(\frac{\log 2y}{\log 2xy} \right)^{\lambda_{1/\beta}}.
\end{split}
\end{equation*}

\section{Proof of Theorem \ref{thmlambda}}\label{seclambda}

Recall that $-\lambda_a$ is the real zero of $g_a(s)$ with maximal real part. 
Theorem \ref{thmlambda} follows from the lower bound for $\lambda_a$ in Corollary \ref{corlambdalb} and the upper bound in Corollary \ref{corlambdaub}.

The following Lemma provides an alternative for calculating $g_a(s)$, defined in \eqref{eqgaseval}, which does not involve Buchstab's function $\omega(u)$. 
\begin{lemma}\label{lemgJ}
Let $a>0$. 
For $\re(s)>0$ we have 
$$
g_a(s) = \frac{1}{\Gamma(s)} \int_0^\infty u^s e^{-u+J(au)} du = \frac{1}{a^{s+1} \Gamma(s)} \int_0^\infty u^s e^{-u/a+J(u)} du  ,
$$
where, for $u>0$,
$$
J(u):=\int_u^\infty \frac{e^{-t}}{t} \, dt.
$$
\end{lemma}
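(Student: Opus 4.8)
The plan is to start from the formula for $g_a(s)$ given in the first displayed definition, namely
$$
g_a(s) = s + s\int_0^\infty \omega(u)\,\frac{du}{(1+au)^{s+1}},
$$
and to recognize the right-hand side as a Laplace-type transform. First I would recall the classical integral representation of Buchstab's function via its Laplace transform: since $\omega(u)$ satisfies $(u\omega(u))' = \omega(u-1)$ with $\omega(u)=1/u$ on $[1,2]$, one has the well-known identity $\int_0^\infty \omega(u) e^{-su}\,du = \tfrac{1}{s}\exp\!\big(\int_0^\infty \tfrac{e^{-st}-1}{t}\,e^{-t}\,dt\big)$ type formula; more precisely I would use the representation $\widehat{\omega}(s) = \frac{1}{s}e^{E(s)}$ for an appropriate entire correction, or simply work directly with the substitution that converts $(1+au)^{-s-1}$ into an exponential. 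Concretely, writing $(1+au)^{-s-1} = \frac{1}{\Gamma(s+1)}\int_0^\infty v^s e^{-(1+au)v}\,dv$ and interchanging the order of integration reduces the $u$-integral to $\widehat{\omega}$ evaluated at $av$, and then I would invoke the clean formula for $\widehat{\omega}$ in terms of the exponential integral.

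The key computational fact I would isolate is the identity
$$
1 + \frac{1}{a}\,\widehat{\Omega}_a(s) \;=\; \frac{1}{\Gamma(s+1)}\int_0^\infty v^s e^{-v + J(av)}\,dv \cdot \frac{1}{\text{(something)}},
$$
after which multiplying by $s$ and using $s/\Gamma(s+1) = 1/\Gamma(s)$ produces the first claimed expression $g_a(s) = \frac{1}{\Gamma(s)}\int_0^\infty u^s e^{-u+J(au)}\,du$. The cleanest route is probably to verify this by checking that the right-hand side, call it $\widetilde{g}_a(s)$, satisfies the same functional relation that characterizes $g_a$: differentiating $J(au)$ gives $\tfrac{d}{du}J(au) = -\tfrac{e^{-au}}{u}$, so integrating by parts in $\int_0^\infty u^s e^{-u+J(au)}\,du$ relates $\widetilde{g}_a(s)$ to $\widetilde{g}_a(s-1)$ (or to $\widehat{F}_a$) in exactly the way dictated by the delay-differential equation $(1+au)\rho_a'(\cdot)$ / the convolution equation \eqref{FaDef} satisfied by $F_a$. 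Matching this recursion together with the boundary value at small $\re(s)$ (where the exponential integral contributions vanish and one recovers $g_a(s) = s + \text{elementary}$, matching \eqref{eqgaseval}) pins down $\widetilde{g}_a = g_a$.

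The second equality in the lemma is then a one-line change of variables: substituting $u \mapsto u/a$ in $\frac{1}{\Gamma(s)}\int_0^\infty u^s e^{-u+J(au)}\,du$ turns $du$ into $a^{-1}\,du$, $u^s$ into $a^{-s}u^s$, $e^{-u}$ into $e^{-u/a}$, and $J(au)$ into $J(u)$, giving the factor $a^{-s-1}$ outside. The main obstacle I anticipate is justifying the interchange of integration order and, more subtly, handling the fact that the integral $\int_0^\infty \omega(u)(1+au)^{-s-1}\,du$ only converges for $\re(s)>0$ while $J(au)$ blows up like $-\log u$ as $u\to 0^+$, so $u^s e^{J(au)}$ behaves like $u^s \cdot u^{\,?}$ near $0$ and one must check integrability there (the singularity is $J(u)\sim -\gamma-\log u + o(1)$ as $u\to 0$, contributing a harmless factor $u^{-1}e^{-\gamma}$, so $u^s e^{J(au)}$ is integrable near $0$ precisely when $\re(s)>0$). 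Once convergence and Fubini are dispatched, the algebra is routine; I would therefore spend most of the write-up on the analytic justification and the identification of $\widehat{\omega}$ with the exponential-integral expression.
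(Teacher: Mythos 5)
Your primary route---writing $(1+au)^{-s-1}=\frac{1}{\Gamma(s+1)}\int_0^\infty v^s e^{-(1+au)v}\,dv$, interchanging the order of integration, and invoking the classical Laplace-transform identity for Buchstab's function in terms of the exponential integral---is essentially the paper's own proof, which performs the same Gamma-integral/Fubini computation and cites $1+\hat{\omega}(s)=e^{J(s)}$ from Tenenbaum (Eq.\ III.6.36). The only precision to add is that the identity is $1+\hat{\omega}(s)=e^{J(s)}$ (not $\hat{\omega}(s)=\frac{1}{s}e^{E(s)}$): the ``$-1$'' in $\hat{\omega}(av)=e^{J(av)}-1$ is exactly what combines with the standalone term $s=\frac{1}{\Gamma(s)}\int_0^\infty u^s e^{-u}\,du$ to yield the single exponent $e^{-u+J(au)}$, and your convergence check near $u=0$ (integrand $\asymp u^{\re(s)-1}$) together with the trivial substitution $u\mapsto u/a$ for the second equality is correct.
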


\begin{proof}
Define $f(v)=\omega((v-1)/a)$.  For  $\re(s)>0$, we have
$$\hat{f}(s) = a e^{-s} \hat{\omega}(as) =a e^{-s}(e^{J(as)}-1),$$ 
by \cite[Eq. III.6.36]{Ten},
and 
\begin{equation*}
\begin{split}
\widehat{\Omega}_a(s) &=
\int_0^\infty \omega((e^u-1)/a) e^{-us} du  \\
&= \int_0^\infty \omega((v-1)/a) v^{-s-1} dv  \\
&=\int_0^\infty f(v) v^{-s-1} dv  \\
&=\frac{1}{ \Gamma(s+1)}\int_0^\infty f(v)  \int_0^\infty (w/v)^s e^{-w} v^{-1} dw dv  \\
&=\frac{1}{ \Gamma(s+1)}\int_0^\infty f(v)  \int_0^\infty u^s e^{-uv} du dv  \\
&= \frac{1}{ \Gamma(s+1)} \int_0^\infty u^s \int_0^\infty f(v)   e^{-uv} dv du  \\
&= \frac{1}{ \Gamma(s+1)} \int_0^\infty u^s  \hat{f}(u) du  \\
&= \frac{1}{ \Gamma(s+1)} \int_0^\infty u^s  a e^{-u}(e^{J(au)}-1) du  \\
&= \frac{a}{ \Gamma(s+1)} \int_0^\infty u^s   e^{-u + J(au)}du -a.  \\
\end{split}
\end{equation*}
The result now follows since $g_a(s):=1/\widehat{F}_a(s)=s(1+\frac{1}{a}\widehat{\Omega}_a(s) )$, by \eqref{FaLap}. 
\end{proof}

\begin{corollary}\label{cordgda}
For real $a>0$ and all $s\in \mathbb{C}$ we have
$$
\frac{\partial }{ \partial a} g_a(s) = \frac{s}{a} g_a(s+1)-\frac{s+1}{a} g_a(s).
$$
\end{corollary}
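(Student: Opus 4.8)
The plan is to work from the integral representation
$$g_a(s) = \frac{1}{\Gamma(s)}\int_0^\infty u^s e^{-u + J(au)}\, du \qquad (\re(s) > 0)$$
established in Lemma \ref{lemgJ}, prove the claimed identity first for $\re(s) > 0$, and then extend it to all $s \in \mathbb{C}$ by analytic continuation, since both sides are entire in $s$: the left side because $g_a$ is entire (as noted after \eqref{eqgaseval}), and $g_a(s+1)$ likewise.

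First I would differentiate the representation with respect to $a$. Since $J'(v) = -e^{-v}/v$, we have $\frac{\partial}{\partial a} J(au) = -e^{-au}/a$, so, differentiating under the integral sign (justified by local uniform convergence of the resulting integral for $\re(s)$ in a compact subset of the right half-plane),
$$\Gamma(s)\,\frac{\partial}{\partial a} g_a(s) = -\frac{1}{a}\int_0^\infty u^s e^{-(1+a)u + J(au)}\, du =: -\frac{1}{a}\,K(s).$$
Second, I would obtain a second expression for $K(s)$ by integrating the defining integral of $\Gamma(s) g_a(s)$ by parts, writing $u^s = \frac{d}{du}\bigl(u^{s+1}/(s+1)\bigr)$ and differentiating $e^{-u+J(au)}$, whose logarithmic derivative in $u$ is $-1 - e^{-au}/u$. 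The boundary terms vanish: at $\infty$ because of the $e^{-u}$ factor, and at $0$ because $J(v) = -\gamma - \log v + O(v)$ as $v\to 0^+$ makes $u^{s+1}e^{J(au)} \asymp u^{s}\to 0$ for $\re(s)>0$. This gives
$$\Gamma(s)\,g_a(s) = \frac{1}{s+1}\,\Gamma(s+1)\,g_a(s+1) + \frac{1}{s+1}\,K(s),$$
hence $K(s) = (s+1)\Gamma(s)\, g_a(s) - \Gamma(s+1)\, g_a(s+1)$.

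Finally, equating the two expressions for $K(s)$ and dividing by $\Gamma(s)$, using $\Gamma(s+1) = s\,\Gamma(s)$, yields
$$\frac{\partial}{\partial a} g_a(s) = \frac{s}{a}\, g_a(s+1) - \frac{s+1}{a}\, g_a(s)$$
for $\re(s) > 0$, and then for all $s \in \mathbb{C}$ by analytic continuation. The only steps requiring care are the two analytic technicalities — differentiation under the integral sign and the vanishing of the boundary terms in the integration by parts — and both are routine given the exponential decay in $u$ at infinity and the $E_1$-type logarithmic singularity at $0$; I expect the boundary analysis at $u=0$ to be the only place where one must be mildly attentive to the hypothesis $\re(s)>0$.
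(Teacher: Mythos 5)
Your proof is correct, and it lives in the same framework as the paper's (the integral representation of Lemma \ref{lemgJ}, differentiation in $a$ for $\re(s)>0$, then analytic continuation), but the execution differs in one genuine way. The paper differentiates the \emph{second} expression in Lemma \ref{lemgJ}, namely $g_a(s)=\frac{1}{a^{s+1}\Gamma(s)}\int_0^\infty u^s e^{-u/a+J(u)}\,du$, in which the $a$-dependence sits only in the prefactor $a^{-(s+1)}$ and in $e^{-u/a}$; differentiating the latter produces an extra factor $u/a^2$, which raises the exponent of $u$ by one and yields $\frac{s}{a}g_a(s+1)-\frac{s+1}{a}g_a(s)$ in a single line, with no integration by parts. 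You instead differentiate the first expression, where $\partial_a J(au)=-e^{-au}/a$ produces the auxiliary integral $K(s)=\int_0^\infty u^s e^{-(1+a)u+J(au)}\,du$, and you then need the integration-by-parts identity $\Gamma(s)g_a(s)=\frac{1}{s+1}\Gamma(s+1)g_a(s+1)+\frac{1}{s+1}K(s)$ to eliminate $K(s)$; your treatment of the boundary terms (exponential decay at $\infty$, and $u^{s+1}e^{J(au)}\asymp u^{s}/a\to 0$ at $0$ for $\re(s)>0$) is right, so the extra step costs only a little bookkeeping. One small imprecision: for the continuation step you say the left side is entire ``because $g_a$ is entire,'' but entirety of $g_a(\cdot)$ alone does not give entirety of $\partial_a g_a(\cdot)$; what is actually needed --- and what the paper records --- is that differentiating \eqref{eqgaseval} in $a$ (justified by locally uniform convergence) exhibits $\frac{\partial}{\partial a}g_a(s)$ itself as an entire function of $s$, after which the identity theorem extends the formula from $\re(s)>0$ to all of $\mathbb{C}$ exactly as you claim.
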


\begin{proof}
Note that \eqref{eqgaseval} implies that for every $a>0$, $g_a(s)$ and $\frac{\partial}{\partial a} g_a(s) $ are both entire functions of $s$. 
When $\re(s)>0$, the result follows from differentiating, with respect to $a$, the second expression for $g_a(s)$ in Lemma \ref{lemgJ}.
The result is valid for all $s\in \mathbb{C}$ by analytic continuation.
\end{proof}

\begin{corollary}\label{corsimplepoles}
Let $a>0$. Every zero of $g_a(s)$ with real part $>-\lambda_a - 1$ is simple. 
\end{corollary}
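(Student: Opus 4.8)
The plan is to combine the functional equation of Corollary \ref{cordgda} with the extremal property of $\lambda_a$, namely that $-\lambda_a$ is a real zero of $g_a$ and that no zero of $g_a$ has real part exceeding $-\lambda_a$ (a Landau‑type consequence of $F_a\ge 0$ that is available independently of this corollary). Fix $a_0>0$ and let $s_0$ be a zero of $g_{a_0}$ with $\re s_0>-\lambda_{a_0}-1$. The one point that drives everything is this: since $\re(s_0+1)>-\lambda_{a_0}$, the point $s_0+1$ lies strictly to the right of all zeros of $g_{a_0}$, so $g_{a_0}(s_0+1)\neq 0$; and $s_0\neq 0$, because $g_{a_0}(0)=e^{-\gamma}/a_0\neq 0$ by \eqref{eqgaseval}. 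Evaluating Corollary \ref{cordgda} at $s_0$ and using $g_{a_0}(s_0)=0$ gives
$$
\partial_a g_{a_0}(s_0)=\frac{s_0}{a_0}\,g_{a_0}(s_0+1)\neq 0 .
$$
By the implicit function theorem the zero locus $\{g_a(s)=0\}$ is, near $(a_0,s_0)$, a holomorphic graph $a=\alpha(s)$ with $\alpha'(s_0)=-g_{a_0}'(s_0)/\partial_a g_{a_0}(s_0)$. Hence $s_0$ is a multiple zero of $g_{a_0}$ precisely when $\alpha'(s_0)=0$, in which case $\alpha(s)-a_0=c_p(s-s_0)^p+O((s-s_0)^{p+1})$ for some $p\ge 2$, $c_p\neq 0$, and for real $a$ near $a_0$ the $p$ zeros of $g_a$ near $s_0$ form a cluster whose offsets from $s_0$ behave like the $p$‑th roots of $(a-a_0)/c_p$.

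The contradiction then comes from this cluster together with the extremal property of $\lambda_a$. Among $p$ equally spaced unit vectors at least one has positive real part (always when $p\ge 3$, and when $p=2$ on at least one of the two sides of $a_0$), so for real $a$ on a suitable side of $a_0$ the cluster contains a zero $s^\ast(a)$ of $g_a$ with $\re s^\ast(a)>\re s_0$. I would first run this when $s_0$ is a \emph{rightmost} zero, i.e. $\re s_0=-\lambda_{a_0}$: then $g_a$ has a zero with real part $>-\lambda_{a_0}$, while the remaining zeros of $g_a$ near that region are either the conjugate‑symmetric partners of $s^\ast(a)$ in the cluster (hence non‑real, or to the left), or lie on the separate branch through the real point $-\lambda_{a_0}$ at real part $\le-\lambda_{a_0}+o(1)$, and all other zeros of $g_{a_0}$ are bounded away to the left; inspecting the \emph{real} zeros of $g_a$ one finds none to the right of the complex zero $s^\ast(a)$, contradicting the realness of the rightmost zero of $g_a$. (The sub‑case where $-\lambda_{a_0}$ itself is the multiple zero is handled the same way, now using that $\alpha$ is real‑analytic on the real axis near $-\lambda_{a_0}$, so the two local zeros become a non‑real conjugate pair on one side of $a_0$.) This already yields what Lemma \ref{lemag0} requires.

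To obtain the statement for the whole strip one uses the anchor cases: for $a\ge 1$ the only zero with $\re s\ge-\lambda_a-1$ is the simple zero $-\lambda_a$, by Lemma \ref{lemag1} for $a>1$ and, for $a=\tfrac1i$ with $i\le 10$, by the values of $\mu_a$ and the complex‑pole locations in Table \ref{table1}; then one lets $a$ decrease and argues by contradiction at the supremum of the set of "bad" parameters, invoking boundedness of the zeros in $\{\re s\ge-\mu\}$ from Lemma \ref{ga} and continuity of $\lambda_a$ to keep a multiple zero in the limit. I expect the genuine obstacle to be the bookkeeping of zeros as they cross the moving line $\re s=-\lambda_a-1$, and in particular ruling out a putative multiple zero that is \emph{not} rightmost — equivalently one whose shift $s+1$ lies strictly between the line $\re s=-\lambda_a$ of rightmost zeros and the rest of the spectrum; this is exactly the situation in which the strict inequality in the statement, together with the precise description of the rightmost zeros, must be exploited.
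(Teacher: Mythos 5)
Your opening move is exactly the paper's key computation: using Corollary \ref{cordgda} together with $g_{a_0}(s_0)=0$, $\re(s_0+1)>-\lambda_{a_0}$ and $s_0\neq 0$ to get $\frac{\partial}{\partial a}g_{a}(s)\big|_{(a_0,s_0)}=\frac{s_0}{a_0}g_{a_0}(s_0+1)\neq 0$. The paper then finishes in one line: it regards the zero as a function $s_0(a)$, differentiates $g_a(s_0(a))=0$ with respect to $a$ to get $0=\frac{s_0}{a}g_a(s_0+1)+g_a'(s_0)\frac{ds_0}{da}$, and concludes $g_a'(s_0)\neq 0$ because the first term is nonzero; this is purely local and applies uniformly to every zero in the half-plane $\re(s)>-\lambda_a-1$, with no distinction between rightmost and interior zeros. (Your reluctance to invoke $\frac{ds_0}{da}$ is understandable, since differentiability of the zero trajectory is delicate exactly at a putative multiple zero, but the alternative you build instead does not reach the full statement.)

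The genuine gap is the one you yourself flag: your contradiction mechanism is global (the Landau-type fact that among the zeros of maximal real part there is a real one), and it only bites when $\re s_0=-\lambda_{a_0}$. For a hypothetical multiple zero with $-\lambda_{a_0}-1<\re s_0<-\lambda_{a_0}$, the cluster of offspring produced by the branch $a=\alpha(s)$ stays strictly to the left of the rightmost zeros, so positivity of $\rho_a$ sees nothing, and your proposal offers no replacement argument. The suggested repair (anchor at $a\ge 1$ via Lemma \ref{lemag1} and at $a=1/i$, $i\le 10$, via Table \ref{table1}, then take the supremum $a^\ast$ of the ``bad'' set of parameters) does not close this: multiple zeros are exactly what occurs when two simple zeros collide, so they typically arise at isolated values of $a$; knowing that all parameters on one side of $a^\ast$ are good produces no contradiction when $a^\ast$ itself carries the multiple zero, and in the limit the strict inequality $\re s>-\lambda_a-1$ can degenerate to equality, putting the limiting zero outside the scope of the statement anyway. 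So as written the proposal establishes simplicity only for zeros of maximal real part -- which, incidentally, is the case needed in Lemma \ref{lemag0}, and that part of your argument is essentially sound provided you first treat the real zero $-\lambda_{a_0}$ (so that in the complex-rightmost case the real zero moves only by $O(|a-a_0|)$, dominated by the $|a-a_0|^{1/p}$ displacement of the cluster) -- but it does not prove Corollary \ref{corsimplepoles} in the stated generality.
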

\begin{proof}
Let $s_0(a)$ denote a zero of $g_a(s)$ with $\re(s_0(a))>-\lambda_a - 1$. Then $g_a(s_0(a))=0$ and 
$$
0=\frac{d}{da} g_a(s_0(a)) = \left.\left( \frac{\partial }{\partial a} g_a(s) \right) \right|_{s=s_0(a)} + g_a'(s_0(a)) \frac{d s_0(a)}{da},
$$
where $g_a'(s):=\frac{\partial}{\partial s}g_a(s)$. With Corollary \ref{cordgda}, we obtain
$$
0= \frac{s_0(a)}{a} g_a(s_0(a)+1) + g_a'(s_0(a)) \frac{d s_0(a)}{da}.
$$
The first term on the right-hand side is $\neq 0$, since $\re(s_0(a)+1)>-\lambda_a$, and $s_0(a)\neq 0$ by \eqref{eqgaseval}.
Thus $ g_a'(s_0(a))\neq 0$, which is the desired conclusion.
\end{proof}

To evaluate $g_a(s)$ with Lemma \ref{lemgJ}, we write
$$
 h_a(s):=\int_0^\infty u^s e^{-u/a+J(u)} du =\int_0^1  +\int_1^\infty =: h_{1,a}(s)+h_{2,a}(s),
$$
say. Note that $h_{2,a}(s)$ is entire and can be evaluated with high precision using numerical integration.
Writing 
$$
I(s):= \int_0^s \frac{e^t-1}{t} dt,
$$
we have $J(u)=-\gamma -\log u - I(-u)$, by \cite[Lemma III.5.9]{Ten}, and
$$
e^\gamma h_{1,a}(s) =  \int_0^1 u^{s-1 }e^{-u/a} e^{-I(-u)} du =  \int_0^1 u^{s-1 }e^{-u/a} \sum_{k\ge 0} b_k u^k  du,
$$
where 
$$
e^{-I(-u)}= \sum_{k\ge 0} b_k u^k .
$$
By Cauchy's inequality,
$$
|b_k| \le  2^{-k}\max_{|s|=2} | e^{-I(-s)}| < \frac{4}{2^k}.
$$
Thus, for $\re(s)>1$, we have
$$
e^\gamma h_{1,a}(s) = \sum_{k\ge 0} b_k  \int_0^1 u^{k+s-1 }e^{-u/a}  du = \sum_{k\ge 0} a^{k+s} b_k  \int_0^{1/a} t^{k+s-1 }e^{-t}  dt.
$$
With the incomplete gamma function $\Gamma(s,z):=\int_z^\infty t^{s-1} e^{-t} dt$, this yields
$$
e^\gamma h_{1,a}(s) =\sum_{k\ge 0} a^{k+s} b_k (\Gamma(s+k)-\Gamma(s+k,1/a)).
$$
When $\re(k+s-1)\ge 0$, i.e. $k\ge 1-\re(s)$, then 
$$
|\Gamma(s+k)-\Gamma(s+k,1/a)| \le \int_0^{1/a} (1/a)^{k+\re{s}-1} e^{-t} dt <  (1/a)^{k+\re{s}-1}.
$$
Thus the last series converges absolutely and provides the meromorphic continuation of $h_{1,a}(s)$ to the entire complex plane. 
Moreover, for $K\ge 1-\re(s)$,
$$
\left| h_{1,a}(s) -e^{-\gamma}\sum_{k= 0}^K a^{k+s} b_k (\Gamma(s+k)-\Gamma(s+k,1/a))\right| 
<  a e^{-\gamma}\sum_{k>K} |b_k| < \frac{4a e^{-\gamma}}{2^K}.
$$
Writing 
$$
h_{1,a,K}(s):=e^{-\gamma}\sum_{k= 0}^K a^{k+s} b_k (\Gamma(s+k)-\Gamma(s+k,1/a)),
$$
it follows that 
$$
|h_a(s) - h_{1,a,K}(s) - h_{2,a}(s)| <  \frac{4a e^{-\gamma}}{2^K}.
$$
Since $g_a(s) = \frac{h_a(s)}{a^{s+1} \Gamma(s)}$, we obtain the explicit error estimate for approximating $g_a(s)$, valid for all complex $s$, 
real $a>0$ and for all $K\ge 1-\re(s)$:
\begin{equation}\label{gasapprox}
\left|g_a(s) - \frac{h_{1,a,K}(s) + h_{2,a}(s)}{a^{s+1} \Gamma(s)} \right| <   \frac{4 e^{-\gamma}}{2^K a^{\re(s)} |\Gamma(s)|}.
\end{equation}

Since $\Gamma(s)$ has poles at integers $\le 0$ and $\Gamma(s,1/a)$ is entire, the last display implies that for all non-negative integers $n$,
\begin{equation}\label{gapoly}
g_a(-n) a e^\gamma = \sum_{k=0}^n a^k b_k \prod_{0\le j \le k-1} (-n+j) =  \sum_{k=0}^n (-a)^k b_k \frac{n!}{(n-k)!} .
\end{equation}
Thus $g_a(-n) a e^\gamma$ is a polynomial in $a$ of degree $n$ with rational coefficients, since the $b_k$ are rational.
This formula gives a simple way of finding the exact values of $g_a(-n)$. We have $g_a(0) a e^\gamma =1 > 0$. The smallest integer $n\ge 0$ for which $g_a(-n-1)<0$
is the likely value of $\lfloor \lambda_{1/a} \rfloor$. 

Writing $f(s):=e^{-I(-s)}$, another way to write this formula is 
$$
g_a(-n) a e^\gamma = \sum_{k=0}^n (-a)^k b_k k! \binom{n}{k}=   \sum_{k=0}^n (-a)^k f^{(k)}(0)  \binom{n}{k} 
=\left. \frac{d^n}{ds^n} \left( e^s f(-as) \right) \right|_{s=0}.
$$
Define $L_a(s) = e^s f(-as) = e^{s-I(as)}$. Then, for $n\in \mathbb{N}\cup\{0\}$,
$$
g_a(-n) a e^\gamma =L_a^{(n)}(0) = \frac{n!}{2\pi i} \int_{\mathcal{C}} \frac{L_a(s)}{s^{n+1}} ds =  \frac{n!}{2\pi i} \int_{\mathcal{C}}  \frac{e^{s-I(as)}}{s^{n+1}} ds,
$$
where ${\mathcal{C}} $ is a positively oriented contour around the origin. 
We have shown the following:

\begin{lemma}
For real $a>0$ and integers $n\ge 0$ we have
$$
g_a(-n) a e^\gamma = \frac{n!}{2\pi i} \int_{\mathcal{C}}  \frac{e^{s-I(as)}}{s^{n+1}} ds,
$$
where ${\mathcal{C}} $ is a positively oriented contour around the origin. 
\end{lemma}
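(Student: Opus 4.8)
The plan is to assemble the identity directly from the analytic apparatus built up in the paragraphs preceding the statement; no genuinely new idea is required beyond careful bookkeeping. The starting point is Lemma \ref{lemgJ}, which gives $g_a(s) = h_a(s)/(a^{s+1}\Gamma(s))$ with $h_a(s) = \int_0^\infty u^s e^{-u/a + J(u)}\,du = h_{1,a}(s) + h_{2,a}(s)$, the two pieces being the integrals over $(0,1]$ and $(1,\infty)$. First I would note that $h_{2,a}(s)$ is entire, so $h_{2,a}(-n)/\Gamma(-n) = 0$ for every integer $n\ge 0$; thus only the near-origin piece $h_{1,a}$ can survive in $g_a(-n)$.

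Next I would substitute $J(u) = -\gamma - \log u - I(-u)$ (from \cite[Lemma III.5.9]{Ten}), expand $e^{-I(-u)} = \sum_{k\ge 0} b_k u^k$, and use the Cauchy estimate $|b_k| < 4\cdot 2^{-k}$ to justify integrating term by term for $\re(s) > 1$. This produces
\[
e^\gamma h_{1,a}(s) = \sum_{k\ge 0} a^{k+s} b_k\bigl(\Gamma(s+k) - \Gamma(s+k,1/a)\bigr),
\]
and the bound $|\Gamma(s+k) - \Gamma(s+k,1/a)| \le (1/a)^{k+\re(s)-1}$, valid once $k\ge 1-\re(s)$, shows the series converges locally uniformly in $s$, hence furnishes the meromorphic continuation of $h_{1,a}$ to all of $\mathbb{C}$. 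Dividing by $a^{s+1}\Gamma(s)$ and letting $s\to -n$: the incomplete-gamma terms $\Gamma(s+k,1/a)$ are entire and are annihilated by the pole of $1/\Gamma(s)$, while $\Gamma(s+k)/\Gamma(s) = s(s+1)\cdots(s+k-1)$ evaluates at $s=-n$ to $(-1)^k n!/(n-k)!$ for $0\le k\le n$ and to $0$ for $k>n$. This collapses the series to the finite sum of \eqref{gapoly}, namely $g_a(-n)\,a e^\gamma = \sum_{k=0}^n (-a)^k b_k\, n!/(n-k)!$.

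Finally I would repackage this polynomial in $a$ as a contour integral. Writing $f(s) = e^{-I(-s)} = \sum_{k\ge 0} b_k s^k$, so that $k!\,b_k = f^{(k)}(0)$ and $n!/(n-k)! = k!\binom{n}{k}$, the sum becomes $\sum_{k=0}^n \binom{n}{k}(-a)^k f^{(k)}(0)$, which by the Leibniz rule is exactly $\left.\frac{d^n}{ds^n}\bigl(e^s f(-as)\bigr)\right|_{s=0}$. Setting $L_a(s) := e^s f(-as) = e^{s - I(as)}$ and noting that $I(s) = \int_0^s (e^t-1)/t\,dt$ is entire, so that $L_a$ is analytic in a neighbourhood of the origin, Cauchy's integral formula for derivatives gives $L_a^{(n)}(0) = \frac{n!}{2\pi i}\int_{\mathcal{C}} L_a(s)\,s^{-n-1}\,ds$ for any small positively oriented contour $\mathcal{C}$ around $0$, which is the asserted identity. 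The one place needing care is the term-by-term passage to $s=-n$ in the series for $g_a(s)$: one must confirm that inserting $1/\Gamma(s)$ and evaluating at $s=-n$ commutes with the infinite sum, which is precisely what the uniform-convergence estimate above secures; everything else is routine.
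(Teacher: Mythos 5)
Your proposal is correct and follows essentially the same route as the paper: it derives the finite polynomial identity \eqref{gapoly} from the incomplete-gamma series expansion of $h_{1,a}$ (with $h_{2,a}$ and the $\Gamma(s+k,1/a)$ terms annihilated by the pole of $\Gamma(s)$ at $s=-n$), then recognizes the sum via Leibniz as $L_a^{(n)}(0)$ with $L_a(s)=e^{s-I(as)}$ and applies Cauchy's integral formula. The only addition is your explicit remark on uniform convergence justifying evaluation at $s=-n$, which the paper leaves implicit.
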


To learn about the zeros of $g_a(s)$, we will estimate the last integral with the saddle point method.
Writing $s=r e^{i t}$, we have
\begin{equation}\label{gacint}
 g_a(-n) a e^\gamma 
= \frac{n!}{2\pi } \frac{e^{r-I(ar)}}{r^n}  \int_{-\pi}^\pi \exp\left(M_{a,r,n}(t)\right)dt,
\end{equation}
for integers $n\ge 0$, where
$$
M_{a,r,n}(t):=r e^{it} - r -I(are^{it})+I(ar) -i t  n .
$$
The derivative of $M_{a,r,n}(t)$ with respect to $t$ vanishes when $t=0$ and 
\begin{equation}\label{nrdef}
n= r-e^{ar}+1 .
\end{equation}
In terms of the Lambert $W$ function, this equation has the solution
\begin{equation}\label{rdef}
r=r(a,n)=n-1-\frac{1}{a} W(-a e^{a(n-1)}).
\end{equation}
The value of $W(u)$ is real if and only if $u\ge -1/e$. Thus $r(a,n)$ is real when $a e^{a(n-1)} \le 1/e$, that is 
$$
n \le n_0:= 1 + \frac{1}{a} \log \frac{1}{ea}.
$$
When $1\le n \le n_0$ and $0<a<1/e$, we note that $\frac{\partial}{\partial n}r(a,n) \ge 1$. 
Thus, $r(a,n)$ increases with $n$ for every fixed $a$. For $ 1 \le n < n_0$,
\begin{equation}\label{rub}
1< -\frac{1}{a} W(-a)=r(a,1) \le r(a,n) < r(a,n_0) = \frac{1}{a} \log \frac{1}{a}.
\end{equation}

\subsection{The case of a real saddle point}

\begin{proposition}\label{propreal}
Let $\eta>0$ be fixed. 
There is a positive constants $a_0>0$ such that for $0<a\le a_0$ and all integers $n$ with
$
1 \le n \le \frac{1}{a}\log \frac{1-\eta}{ea}
$
we have 
$$
g_a(-n) a e^\gamma = \frac{n! \, e^{r-I(ar)}}{r^n \sqrt{2\pi r\left(1-a e^{a r}\right)}} \left(1+O(n^{-1}+ a \log (1/a) )\right),
$$
where $r=r(a,n)$ is given by \eqref{rdef}. Hence $g_a(-n)>0$, provided $n$ is sufficiently large and $a$ is sufficiently small.
\end{proposition}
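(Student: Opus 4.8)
\emph{Approach.} The plan is to evaluate the contour integral in \eqref{gacint} by the saddle-point method, taking $\mathcal C$ to be the circle $|s|=r$ with $r=r(a,n)$ the real saddle \eqref{rdef}, so that \eqref{nrdef} holds. Writing $s=re^{it}$ as in \eqref{gacint} and using $I'(z)=(e^z-1)/z$, one computes $M_{a,r,n}(0)=0$, $M'_{a,r,n}(0)=i\bigl(r-e^{ar}+1-n\bigr)=0$ (this is exactly the saddle equation \eqref{nrdef}), and
$$
M''_{a,r,n}(0)=-r\bigl(1-ae^{ar}\bigr)=:-\Lambda .
$$
Hence the Laplace approximation $\int_{-\pi}^{\pi}e^{M_{a,r,n}(t)}\,dt\approx e^{M_{a,r,n}(0)}\sqrt{2\pi/\Lambda}=\sqrt{2\pi/(r(1-ae^{ar}))}$, substituted into \eqref{gacint}, reproduces exactly the main term of the Proposition, and the whole task is to justify this approximation with relative error $O(n^{-1}+a\log(1/a))$.

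\emph{Geometry of the saddle and derivative bounds.} From \eqref{nrdef} we have $r=n+e^{ar}-1$, so $e^{ar}\le r$ (since $n\ge1$) and $n<r<n+\tfrac1a$. Combining $r(a,n)<r(a,n_0)=\tfrac1a\log\tfrac1a$ from \eqref{rub} with the monotonicity of $r(a,\cdot)$, a short computation with the saddle equation shows that the hypothesis $n\le\tfrac1a\log\tfrac{1-\eta}{ea}$ forces $ae^{ar}\le1-\delta$ for a constant $\delta=\delta(\eta)>0$; consequently $\Lambda=r(1-ae^{ar})\asymp_\eta r$. Differentiating $M''_{a,r,n}(t)=-re^{it}+(are^{it})e^{are^{it}}$ repeatedly, the $k$-th derivative is a bounded combination of $re^{it}$ and $(are^{it})P_{k-2}(are^{it})e^{are^{it}}$, with $P_{k-2}$ a polynomial of degree $k-2$; since $|are^{it}|=ar$ and $|are^{it}e^{are^{it}}|\le are^{ar}=r\cdot ae^{ar}\le r$, this yields
$$
|M^{(k)}_{a,r,n}(t)|\ll_k r(1+ar)^{k-2}\qquad (k\ge2,\ t\in\mathbb R).
$$
With $\Lambda\asymp_\eta r$, the rescaled coefficients $a_k:=M^{(k)}_{a,r,n}(0)/\Lambda^{k/2}$ therefore satisfy $|a_k|\ll_{k,\eta}q^{\,k-2}$, where $q:=(1+ar)/\sqrt r$ is the single quantity that will control the error.

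\emph{Central range and the error.} For a small fixed $\varepsilon$ write $M_{a,r,n}(t)=-\tfrac12\Lambda t^2+Q(t)$, $Q(t)=\sum_{k\ge3}\tfrac{M^{(k)}_{a,r,n}(0)}{k!}t^k$ (plus a Taylor remainder controlled by the above bound). Substituting $t=\tau/\sqrt\Lambda$ turns the central part $|t|\le\varepsilon$ into $\int e^{-\tau^2/2}e^{Q}\,d\tau$ over $|\tau|\le\varepsilon\sqrt\Lambda$; since $|a_k|\ll q^{k-2}$ one expands $e^Q$ on $|\tau|\ll\log\Lambda$ (the complementary range being killed once one notes $\re Q(t)=\re M_{a,r,n}(t)+\tfrac12\Lambda t^2$ and uses the tail bound below), integrates against the Gaussian, and finds that all odd powers vanish, leaving
$$
\int_{-\pi}^{\pi}e^{M_{a,r,n}(t)}\,dt=\sqrt{\tfrac{2\pi}{\Lambda}}\;\bigl(1+O_\eta(q^2)\bigr),
$$
the lowest surviving corrections coming from the $a_4$ and $a_3^2$ terms, both $\ll q^2$. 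One then bounds $q^2=(1+ar)^2/r$ in two regimes: if $ar\le2$ (which contains all $n\le1/a$) then $r\asymp n$ and $q^2\ll1/n$; if $ar>2$ then necessarily $n>1/a$, $r\asymp n$, and $ar\le\log(1/a)$, so $q^2\asymp a^2r\asymp a^2n\le a\log(1/a)$ since $an\le\log(1/a)$ in our range. Hence $q^2\ll_\eta n^{-1}+a\log(1/a)$, and $q\to0$ as $n\to\infty$, $a\to0$, which also guarantees convergence of the expansion.

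\emph{Tail and conclusion.} For $\varepsilon\le|t|\le\pi$ we estimate $\int e^{\re M_{a,r,n}(t)}\,dt$: since $\tfrac{d}{dt}\re M_{a,r,n}(t)=-r\sin t+e^{ar\cos t}\sin(ar\sin t)$ and $\sin\theta\le\theta$ for $\theta\ge0$, $e^{ar\cos t}\le e^{ar}$, we get $\tfrac{d}{dt}\re M_{a,r,n}(t)\le-(1-ae^{ar})r\sin t\le-\delta r\sin t<0$ on $(0,\pi)$, so $\re M_{a,r,n}$ is decreasing on $[0,\pi]$ and $\re M_{a,r,n}(t)\le\re M_{a,r,n}(\varepsilon)\ll_\eta-r$ there; thus the tail is $O(e^{-c_\eta r})$, which is $O(q^2)$ relative to the main term $\asymp_\eta r^{-1/2}$ because $r\ge n\to\infty$. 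Assembling the three pieces in \eqref{gacint} gives the displayed formula; its main term is positive and the relative error is $o(1)$ once $n$ is large and $a$ small, so $g_a(-n)>0$ follows. (When $n$ stays bounded the claimed estimate is immediate from \eqref{gapoly} and Stirling's formula, the error term being $O(1)$.) The step requiring the most care is the central error analysis: keeping the derivative bounds sharp -- in particular using $ae^{ar}\le1-\delta$ to get the exponent $k-2$ rather than $k-1$ -- and splitting according to whether $ar$ is bounded or large, so that the final error emerges in precisely the form $O(n^{-1}+a\log(1/a))$.
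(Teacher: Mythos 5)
Your proposal is correct and follows the paper's overall strategy: the same contour integral \eqref{gacint}, the same real saddle $r(a,n)$ from \eqref{nrdef}--\eqref{rdef}, the key inequality $ae^{ar}\le 1-\eta$ (the paper's \eqref{rubsharp}) to make $c_2=r(1-ae^{ar})\asymp_\eta r$, a Gaussian approximation on the central range with relative error controlled by $q^2=(1+ar)^2/r\ll n^{-1}+a\log(1/a)$ (the paper's $O(r^{-1}+ra^2)$ in the $I_1$ analysis, via the same derivative bounds $M^{(k)}\ll r(1+ar)^{k-2}$ and the same symmetry cancellation of the cubic term), and the same quick disposal of bounded $n$ via \eqref{gapoly}. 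Where you genuinely diverge is the non-central range: instead of the paper's two-range decomposition $I_2,I_3$ with the explicit integral bounds for $Q_1,Q_2$ in $\re M_{a,r,n}(t)=-r(1-\cos t)+Q_1+Q_2$, you prove the pointwise differential inequality $\frac{d}{dt}\re M_{a,r,n}(t)=-r\sin t+e^{ar\cos t}\sin(ar\sin t)\le -(1-ae^{ar})\,r\sin t$ on $(0,\pi)$, which upon integration gives $\re M_{a,r,n}(t)\le -\eta\, r(1-\cos t)$ for all $0\le t\le\pi$; this single monotonicity bound is valid (I checked it, using $\sin\theta\le\theta$ and $e^{ar\cos t}\le e^{ar}$, and it is consistent at $t=\pi$ since $\int_0^{ar}\frac{e^w-e^{-w}}{w}\,dw\le 2are^{ar}$) and is slicker than the paper's treatment, since it simultaneously kills the intermediate range $\log r/\sqrt r\ll|t|\le\varepsilon$ and the outer range $|t|\ge\varepsilon$ with exponential savings. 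The only places where you are terser than a complete write-up requires are (i) making explicit that the "complementary range" inside $|t|\le\varepsilon$ is handled by this same inequality (giving $e^{\re M}\ll e^{-c(\log r)^2}\ll r^{-2}$ there), and (ii) recording that the Taylor remainder and higher corrections are $\ll q^{3}(\log r)^{O(1)}\ll q^{2}$ because $q(\log r)^{O(1)}\to0$ in the admissible range (split according to $ar\le 2$ or $ar>2$, exactly as in your error conversion); both points hold and are at the level of routine bookkeeping, so the argument is sound.
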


\begin{corollary}\label{corlambdalb}
Let $\eta>0$. For all sufficiently small $a>0$, the function $g_a(s)$ does not vanish in the half-plane $\re(s)>  -\frac{1}{a}\log  \frac{1-\eta}{e a}$.
\end{corollary}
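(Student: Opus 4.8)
The plan is to prove that $g_a(\sigma)>0$ for every real $\sigma$ with $-N\le\sigma\le 0$, where $N:=\frac1a\log\frac{1-\eta}{ea}$ and $a$ is small. This suffices: by Lemma \ref{lemag0} and Corollary \ref{corsimplepoles}, every zero of $g_a$ has real part $\le-\lambda_a$, with $-\lambda_a$ itself the simple real rightmost zero; since $g_a(0)=e^{-\gamma}/a>0$ by \eqref{gapoly} and $g_a$ is real, nonvanishing, and eventually positive on the real half-line $(-\lambda_a,\infty)$, it is positive there. If $g_a(\sigma)>0$ for all $\sigma\in[-N,0]$ then, together with $g_a>0$ on $(0,\infty)$, $g_a$ has no real zero in $[-N,\infty)$, hence $-\lambda_a<-N$, i.e.\ $\lambda_a>N$; consequently the zero-free half-plane $\re(s)>-\lambda_a$ contains $\{\re(s)>-N\}$, which is the assertion.

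At the integer points $\sigma=-n$: Proposition \ref{propreal} shows that its displayed leading factor is positive and the error $O(n^{-1}+a\log(1/a))$ is less than $\tfrac12$ as soon as $n\ge n_1$ and $a\le a_0(\eta)$, for some absolute $n_1$; thus $g_a(-n)>0$ for all integers $n$ with $n_1\le n\le N$. For the finitely many integers $0\le n<n_1$, formula \eqref{gapoly} gives $g_a(-n)\,a\,e^{\gamma}=\sum_{k=0}^{n}(-a)^k b_k\frac{n!}{(n-k)!}\to b_0=1$ as $a\to0^+$, so $g_a(-n)>0$ for $a$ small. Hence, for $a$ sufficiently small, $g_a(-n)>0$ for every integer $n\in[0,\lfloor N\rfloor]$.

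The crux is the non-integer points; here I would extend the saddle-point estimate of Proposition \ref{propreal} to real non-integer $\sigma$. Using $J(u)=-\gamma-\log u-I(-u)$ in the second identity of Lemma \ref{lemgJ} and writing $f(u):=e^{-I(-u)}$ (so that $e^{-u/a}f(u)=L_a(-u/a)$ with $L_a(s)=e^{s-I(as)}$), one gets $g_a(\sigma)\,a^{\sigma+1}\Gamma(\sigma)=e^{-\gamma}\int_0^\infty u^{\sigma-1}e^{-u/a}f(u)\,du$ for $\re(\sigma)>0$; rotating the contour off the positive axis and wrapping the branch point of $u^{\sigma-1}$ yields the meromorphic continuation $g_a(\sigma)\,a\,e^{\gamma}=\frac{\Gamma(1-\sigma)}{2\pi i}\int_{\mathcal H}\frac{L_a(s)}{s^{\,1-\sigma}}\,ds$, with $\mathcal H$ a Hankel contour encircling the negative real axis; at $\sigma=-n$ the integrand is single-valued and this collapses to the loop integral $\frac{n!}{2\pi i}\oint\frac{L_a(s)}{s^{n+1}}\,ds$ used just before Proposition \ref{propreal}. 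Deforming $\mathcal H$ through the saddle $r=r(a,-\sigma)$ of \eqref{rdef}--\eqref{nrdef} --- which is real since $-\sigma\le N<n_0$, satisfies $1<r<\frac1a\log\frac1a$ by \eqref{rub}, and has $1-ae^{ar}\gg_\eta 1$ throughout this range (the gap $n_0-N$ being $\asymp_\eta 1/a$) --- and running Laplace's method exactly as in Proposition \ref{propreal} gives
$$
g_a(\sigma)\,a\,e^{\gamma}=\frac{\Gamma(1-\sigma)\,e^{\,r-I(ar)}}{r^{-\sigma}\sqrt{2\pi r\,(1-ae^{ar})}}\Bigl(1+O_\eta\bigl(|\sigma|^{-1}+a\log(1/a)\bigr)\Bigr),
$$
whose main factor is positive; hence $g_a(\sigma)>0$ whenever $|\sigma|\ge C$ and $a\le a_1$, for $C=C(\eta)$ and $a_1=a_1(\eta)>0$. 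The remaining $\sigma$ lie in the fixed compact interval $[-C,0]$; there the substitution $u=at$ gives $g_a(\sigma)\,a\,e^{\gamma}=\Gamma(\sigma)^{-1}\int_0^\infty t^{\sigma-1}e^{-t}f(at)\,dt\to1$ as $a\to0^+$, uniformly on $[-C,0]$ (since $g_a$ is entire and $|g_a'(\sigma)|\ll_C 1$ on $[-C,0]$ by \eqref{eqgaseval}, so $(g_a(\sigma)\,a\,e^{\gamma})'\ll_C a$, which forces the pointwise limit to be uniform; the integer values were handled above). Combining the three ranges, $g_a(\sigma)>0$ on all of $[-N,0]$ once $a$ is small.

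The main obstacle is this non-integer extension: producing a Hankel-contour representation that genuinely continues $g_a(\sigma)\,a\,e^{\gamma}$ to non-integer $\sigma$ and reduces to the loop integral at integers, and then pushing the saddle-point analysis of Proposition \ref{propreal} through the branch cut with an error term uniform over the unboundedly growing range $-N\le\sigma\le-C$. Structurally this mirrors Proposition \ref{propreal} step for step --- the saddle stays real and the second-derivative computation at the saddle is unchanged --- so the difficulty is organizational (the branch of $s^{1-\sigma}$, the precise deformed contour, uniformity of the Laplace estimate) rather than a genuinely new idea.
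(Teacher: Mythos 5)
Your opening reduction is sound: since (by Lemma \ref{lemag0}, which rests on $\rho_a\ge 0$) the maximal real part of the zeros of $g_a$ is attained at the real zero $-\lambda_a$, it suffices to show $g_a(\sigma)>0$ for real $\sigma\in[-N,0]$ with $N=\frac1a\log\frac{1-\eta}{ea}$; the integer points and the bounded range $[-C,0]$ are also handled correctly (Proposition \ref{propreal}, formula \eqref{gapoly}, and $g_a(s)\,a e^\gamma=1+O(as)$ from \eqref{eqgaseval}). But the heart of the matter --- positivity at \emph{non-integer} $\sigma$ uniformly for $-N\le\sigma\le-C$ --- is exactly what you leave unproved. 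The Hankel-contour continuation you propose is of the right shape (it collapses to the loop integral at $\sigma=-n$, the saddle $r(a,-\sigma)$ stays real in this range, and $1-ae^{ar}\ge\eta$ there), but Proposition \ref{propreal} as stated says nothing about non-integer arguments, and converting your sketch into a proof requires a new uniform saddle-point estimate: justifying the continuation, deforming the Hankel contour onto the circle $|s|=r$ together with two rays along the branch cut, bounding those ray contributions, tracking the branch of $s^{1-\sigma}$, and making all error terms uniform as $-\sigma$ grows like $\frac1a\log\frac1a$. None of this is carried out, and you identify it yourself as the main obstacle; so as written this is a plausible plan with a genuine gap at its central step, not a proof.

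The paper's own argument shows the non-integer information is never needed. It proceeds by contradiction using only integer values: if $0\le\lambda_a\le\frac1a\log\frac{1-\eta}{ea}$, set $n=\lceil\lambda_a\rceil$, so the real zero $-\lambda_a$ lies in $[-n,-n+1]$, while $g_a(-n)$ and $g_a(-n+1)$ are positive and remain positive as $a$ decreases to $0$ (the admissible range for $n$ in Proposition \ref{propreal} only grows as $a$ shrinks). The zero varies continuously with $a$ and cannot pass through the endpoints; yet once $a$ is so small that $|as|\le 1$ on the now-fixed interval, \eqref{eqgaseval} gives $g_a(s)\,a e^\gamma=1+O(as)>0$ on all of $[-n,-n+1]$, so the trapped zero would have to leave the real axis --- impossible, since a zero of maximal real part must be real because $\rho_a\ge 0$ (Lemma \ref{FaInv}). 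If you wish to salvage your route, you must actually prove the non-integer analogue of Proposition \ref{propreal}; otherwise, the continuity-in-$a$ argument lets you conclude from the integer case you already have.
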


\begin{proof}[Proof of Corollary \ref{corlambdalb}]
Since $\rho_a(u) \ge 0$, Lemma \ref{FaInv} implies that among the zeros of $g_a(s)$ with maximal real part must be a real zero. 
Assume that a zero with maximal real part is at $s=-\lambda_a$ with $0\le \lambda_a \le  \frac{1}{a}\log  \frac{1-\eta}{e a}$.
Let $n=\lceil \lambda_a \rceil$, so that $n \le \frac{1}{a}\log  \frac{1-\eta/2}{e a}$. The proposition implies that both
$g_a(-n)$ and $g_a(-n+1)$ are positive and stay positive as $a$ decreases to $0$. The zero at $s=-\lambda_a$ must vary continuously with $a$.
From \eqref{eqgaseval} it follows that $g_a(s) a e^\gamma = 1 +O(as)$ when $|as|\le 1$. Thus
 $g_a(s)>0$ for all $s \in [-n, -n+1]$ when $a$ is sufficiently small. It follows that the zero at $s=-\lambda_a$  would have to 
become a complex zero as $a$ decreases. But the right-most zero of $g_a(s)$ cannot be complex, since $\rho_a(u) \ge 0$. 
Thus $g_a(s)$ does not vanish when $\re(s)> - \frac{1}{a}\log  \frac{1-\eta}{e a}$.
\end{proof}

\begin{proof}[Proof of Proposition \ref{propreal}]
First assume $n \ll 1$. Then $r\ll 1$ and \eqref{gapoly} shows that $g_a(-n) a e^\gamma = 1+O(a) >0$, provided $0<a\le a_0$. 
Thus $ g_a(-n) a e^\gamma \ll 1$, which implies the asymptotic estimate when $n\ll 1$. 
In the remainder of this proof we assume that $n$ is sufficiently large and that $n \le \frac{1}{a}\log \frac{1-\eta}{ea}$.
Since $r=n+e^{ar}-1 \ge n$, we may assume that $r$ is large. 

We break up the integral in \eqref{gacint} as 
$$
\int_{-\pi}^\pi  \exp\left(M_{a,r,n}(t)\right)dt = \int_{-\delta}^\delta + \int_{\delta<|t|\le t_0} +\int_{t_0<|t| \le \pi} =: I_1+I_2+I_3,
$$
say, where 
$$
\delta: =\frac{\log r }{\sqrt{r}}
$$
and $t_0$ is a sufficiently small positive constant. 
For $I_2$ and $I_3$ we estimate the absolute value of the integrand. We have
$$
\re(M_{a,r,n}(t)) = -r(1-\cos t ) + \re(I(ar)-I(ar e^{it}))
$$
and
\begin{multline*}
 \re(I(ar)-I(ar e^{it}))  = \int_0^1 (e^{arv}-e^{arv \cos t }\cos(arv \sin t) ) \frac{dv}{v} \\
 =  \int_0^1 e^{arv}(1-\cos(arv \sin t) ) \frac{dv}{v} +  \int_0^1 (e^{arv}-e^{arv \cos t })\cos(arv \sin t) \frac{dv}{v}\\
=Q_1+Q_2,
\end{multline*}
say. We first consider $I_3$, where $|t|>t_0 \gg 1$, $r(1-\cos t) \gg r$. When $art \le 2$ we have $ar\ll 1$ and 
$$
Q_1 \le \int_0^{1} e^{arv}( a r v  t)^2  \frac{dv}{v} \ll 1.
$$ 
When $art > 2$, we write
\begin{multline*}
Q_1 \le \int_0^{1/(ar t)} e^{arv}( a r  t)^2 v dv +  \int_{1/(ar t)}^{1/2} e^{arv} 2 \frac{dv}{v} + \int_{1/2}^1 e^{arv} 2 \frac{dv}{v}\\
\le t e^{1/t} + 2 (ar t) \frac{e^{ar/2}}{ar} + 4 \frac{e^{ar}}{\max(1,ar)}  \ll \frac{e^{ar}}{1+ar}.
\end{multline*}
Similarly,
\begin{multline*}
Q_2 \le \int_0^{1} e^{arv}( 1-e^{-arv t^2/2}) \frac{dv}{v}\le \int_0^{1/2} e^{arv} ar (t^2/2) dv +  \int_{1/2}^1  e^{arv} 2 dv \\
\ll e^{ar/2} + \frac{e^{ar}}{1+ar} \ll \frac{e^{ar}}{1+ar}.
\end{multline*}
Since $e^{ar}=r-n+1 \le r$, it follows that there is a constant $c>0$ such that 
$$
|I_3| \ll \exp( - c r + O( e^{ar}/(1+ar))) \ll \exp( - c r/2 ) \ll 1/r^2.
$$

Next we estimate $I_2$, where $\delta \le |t| \le t_0$. Given any $\varepsilon >0$, we can choose $t_0 $ sufficiently small such that for $|t|\le t_0$ we have
$$
 r(1-\cos t) \ge \frac{r t^2}{2+\varepsilon},
$$
$$
Q_1 \le \int_0^1 e^{arv} \frac{(arv \sin t)^2}{2} \frac{dv}{v} \le \frac{(art)^2}{2} \int_0^1 v e^{arv} dv \le \frac{ar t^2 e^{ar}}{2},
$$
and
$$
Q_2 \le \int_0^{1} e^{arv}( 1-e^{-arv t^2/2})  \frac{dv}{v} \le  \int_0^{1} e^{arv} ar t^2/2 dv \le \frac{e^{ar}t^2}{2}.
$$
Now $r-n = e^{ar}-1 < \frac{1}{a}$, by \eqref{nrdef} and \eqref{rub}. Thus,
\begin{equation}\label{rubsharp}
r = n + (r-n) \le \frac{1}{a}\log \frac{1-\eta}{ae} + \frac{1}{a} = \frac{1}{a}\log \frac{1-\eta}{a},
\end{equation}
i.e. $ar e^{ar} \le r(1-\eta)$. 
It follows that there is a constant $c>0$ such that 
$$
 -r(1-\cos t ) +Q_1+Q_2 \le -\left(\frac{r }{2+\varepsilon} -\frac{e^{ar}(ar+1)}{2}\right) t^2 < - c r \delta^2 = -c \log^2 r
$$
and
$$
|I_2| \ll \exp(-c \log^2 r) \ll 1/r^2. 
$$

For $I_1$ we replace $n$ by $ r-e^{ar}+1$ and approximate $M_{a,r,n}(t)$ by its third degree Taylor polynomial around $t=0$.
We have
\begin{multline*}
\exp(M_{a,r,n}(t)) = \exp\left(-\frac{r}{2}  \left(1-a e^{a r}\right) t^2+ i  \frac{c_{3}}{6} t^3 + O(r  (1+a r)^2  t^4) \right) \\
=\exp(- c_{2} t^2/2) \left\{1+ i  \frac{c_{3}}{6} t^3+O\left(r  (1+a r)^2  t^4 + r^2(1+ar)^2 t^6\right)\right\},
\end{multline*}
since 
$$ c_3 = -r + (1+ar) ar e^{ar} \ll r (1+ar),
$$
by \eqref{rubsharp}, and
$$
\frac{d^4}{dt^4}M_{a,r,n}(t) \ll r (1+ar)^2 \qquad (t \in \mathbb{R}).
$$

Now $r< \frac{1}{a} \log \frac{1}{a}$ implies that $c_2:=r(1-a e^{ar})$ is positive and
$$
\int_{-\infty}^\infty e^{-c_2 t^2/2} t^k dt \ll_k c_2^{-(k+1)/2}.
$$
Thus the contribution from the error term to $I_1$ is 
$$\ll r (1+ar)^2  c_{2}^{-5/2} + r^2 (1+ar)^2 c_{2}^{-7/2}.$$
The contribution from the term $i t^3 c_{3}$ to $I_1$ vanishes by symmetry.
The contribution from the term $1$ to $I_1$ is 
$$
\int_{-\infty}^\infty e^{-\frac{1}{2} t^2 c_{2}}dt +O\left(\int_\delta^\infty e^{-\frac{1}{2} t^2 c_{2}} dt \right)
=\sqrt{\frac{2\pi}{c_2}} + O\left(\frac{e^{-\delta^2 c_2/2}}{\delta c_2}\right).
$$
Thus
$$
I_1 = \sqrt{\frac{2\pi}{c_2}} \left(1+O\left(\frac{e^{-\delta^2 c_2/2}}{\delta \sqrt{c_2}}
+r (1+ar)^2  c_{2}^{-2} + r^2 (1+ar)^2 c_{2}^{-3}\right) \right).
$$
From \eqref{rubsharp} we have $\eta r \le c_2 \le r$, which implies
$$
I_1 =\sqrt{\frac{2\pi}{c_2}} \left(1+O(r^{-1}+r a^2)\right).
$$
Since $n\le r$, $ar < \log \frac{1}{a}$, $c_2=r\left(1-a e^{a r}\right)$ and $|I_2|+|I_3|\ll 1/r^2$, 
the result now follows from \eqref{gacint}.
\end{proof}

\subsection{The case of complex saddle points}

Here we assume 
$
n >  1 + \frac{1}{a} \log \frac{1}{ea},
$
where formula \eqref{rdef} yields a complex value of $r$.  
Define
$$
L:= \log \frac{1}{a},
$$
and
$$
\alpha := \frac{(3 \nu a L)^{2/3}}{2},
$$
where $\nu >0$ is a constant to be determined later. 
We choose
$$
n:= 1 + \left\lfloor \frac{1}{a} \log \frac{1+\alpha}{e a} \right\rfloor = \frac{1}{a} \log \frac{1+\alpha}{e a} + O(1)
$$
and
$$
r:= \frac{1}{a}\left( L + \alpha\left(\frac{1}{3}+\frac{2}{L}\right) \right).
$$
Then
$$
a e^{ar} = e^{\alpha\left(\frac{1}{3}+\frac{2}{L}\right) } = 1+ \alpha\left(\frac{1}{3}+\frac{2}{L}\right) +O(\alpha^2). 
$$
Define
$$
K(\nu):= \int_{-\infty}^\infty \cos\left( -\frac{3\nu}{2}u +\frac{\nu}{2}u^3 \right) du .
$$
Although we will not need this, \cite[Eqs. (9.5.1) and (9.7.9)]{NIST} implies that
\begin{equation}\label{KAiry}
K(\nu)= 2 \pi \left(\frac{2}{3 \nu}\right)^{1/3} \mathrm{Ai}\left( - \left(\frac{3 \nu}{2}\right)^{2/3} \right) = \sqrt{\frac{8\pi}{3 \nu}}\left(\cos(\nu-\pi/4)+O(\nu^{-1})\right),
\end{equation}
where $\mathrm{Ai}(z)$ is the Airy function.  

 \begin{figure}[h]
\begin{center}
\includegraphics[height=5.4425cm,width=10cm]{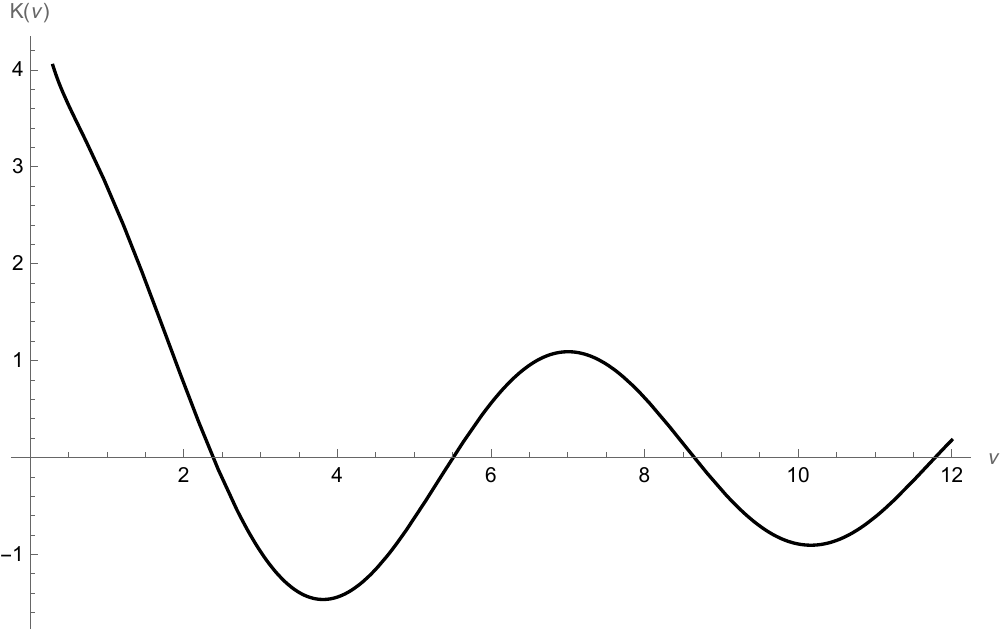}
\caption{The graph of $K(\nu)$.}
\label{figure2}
\end{center}
\end{figure}

\begin{proposition}\label{proplambdaub}
Let $\nu>0$ be fixed. With the notation as above, we have 
\begin{equation*}
 g_a(-n) a e^\gamma 
= \frac{n!}{2\pi } \frac{e^{r-I(ar)}}{r^n}  \frac{\sqrt{2\alpha}}{L} ( K(\nu) + o(1)),
\end{equation*}
as $a \to 0$. 
\end{proposition}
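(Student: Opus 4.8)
The plan is to evaluate the integral in \eqref{gacint} by the saddle point method; the new feature compared with Proposition \ref{propreal} is that, with the present choice of $n$ and $r$, the relevant saddle is a confluence of two complex-conjugate saddles, so the local model is an Airy-type integral rather than a Gaussian, and the half-width of the central region is exactly $\sigma:=\sqrt{2\alpha}/L=(3\nu a/L^2)^{1/3}$. Writing $M:=M_{a,r,n}$, it suffices to prove that $\int_{-\pi}^{\pi}e^{M(t)}\,dt=\sigma\,(K(\nu)+o(1))$, which on substitution into \eqref{gacint} is the proposition.

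First I would expand $M$ near $t=0$: a direct computation gives $M(0)=0$, $M'(0)=i(r-e^{ar}+1-n)$, $M''(0)=-r(1-ae^{ar})$, $M'''(0)=i\bigl(-r+(1+ar)are^{ar}\bigr)$, and $M^{(4)}(t)\ll a^2r^3$ for $|t|\le t_1$ (defined below). Using $ae^{ar}=1+\alpha(\tfrac13+\tfrac2L)+O(\alpha^2)$ together with the definitions of $n$, $r$ and $\alpha$, one checks that $M'(0)=-\tfrac{i\alpha}{a}(1+o(1))$, that $M''(0)=\tfrac{r\alpha}{3}(1+o(1))$ is real and \emph{positive}, that $M'''(0)=irL(1+o(1))$ is purely imaginary, and that under $t=\sigma u$ the Taylor expansion collapses to $M(\sigma u)=i\bigl(-\tfrac{3\nu}{2}u+\tfrac{\nu}{2}u^3\bigr)+o(1)$, with the $o(1)$ uniform and absolutely summable over $|u|\le D$ provided $D=D(a)\to\infty$ slowly enough, say $D=(\log(1/a))^{1/8}$. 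In particular $t_1:=\sigma D$ is much smaller than $t^{\ast}\asymp\sqrt{\alpha/L}$, the location of the local maximum of $\re M$ on $(0,\pi)$.

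I would then split $\int_{-\pi}^{\pi}e^{M(t)}\,dt$ into $|t|\le t_1$, $t_1<|t|\le t_0$ and $t_0<|t|\le\pi$, with $t_0$ a small fixed constant. On $|t|\le t_1$ one has $\re M(t)\le\re M(t^{\ast})=o(1)$, so the integrand has modulus $1+o(1)$; using the symmetry $M(-t)=\overline{M(t)}$ to reduce to $\int e^{\re M}\cos(\im M)$, rescaling $t=\sigma u$, and inserting the expansion above yields
$$
\int_{|t|\le t_1}e^{M(t)}\,dt=\sigma\!\int_{|u|\le D}\!\cos\!\Bigl(-\tfrac{3\nu}{2}u+\tfrac{\nu}{2}u^3\Bigr)\,du+o(\sigma)=\sigma\,(K(\nu)+o(1)),
$$
the last step using the conditional convergence of the integral defining $K(\nu)$. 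On $t_0<|t|\le\pi$ the crude estimates used for the term $I_3$ in the proof of Proposition \ref{propreal} give $\re M(t)\ll -r$, a contribution of $O(e^{-cr})=o(\sigma)$.

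The delicate range, and the main obstacle, is $t_1<|t|\le t_0$. Here $\re M$ has a local maximum at $\pm t^{\ast}$ of height $\re M(t^{\ast})\asymp\alpha^2/a=o(1)$, so $e^{\re M}$ is of size $1+o(1)$ over an interval of length $\asymp t^{\ast}\gg\sigma$; unlike in Proposition \ref{propreal} one cannot bound by absolute values and must exploit the oscillation of $\im M$, whose derivative is $\tfrac{d}{dt}\im M(t)=r\cos t-e^{ar\cos t}\cos(ar\sin t)+1-n$. I would show: (i) this derivative is positive on $(\sigma,t^{\ast\ast\ast})$, where its first zero $t^{\ast\ast\ast}$ satisfies $t^{\ast\ast\ast}\asymp 1/L$; (ii) on $t_1\le|t|\le c_0/L$, with $c_0$ a suitable constant $<t^{\ast\ast\ast}L$, this derivative is $\gg 1/\sigma$ — at $t=t_1$ already $\asymp\alpha D^2/a$, from the second Taylor coefficient; (iii) $\re M(t)\ll -1/(aL)$ for $t\ge c_0/L$, because $\re M$ is decreasing past $t^{\ast}$ and $\re M(c_0/L)\asymp -1/(aL)$. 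Then integration by parts with respect to the phase on $t_1\le|t|\le c_0/L$ (where $|M''(t)|\ll rL|t|$ keeps the resulting integral under control) and the trivial bound $e^{\re M}\ll e^{-1/(aL)}\ll\sigma$ on $c_0/L\le|t|\le t_0$ each contribute $o(\sigma)$. Combining the three ranges and substituting into \eqref{gacint} proves the claim. The crux is precisely this last step: since $\re M$ is not helpfully decreasing near the saddle, the tail has to be controlled by a careful integration-by-parts argument rather than by absolute values, and one must keep the genuine Airy stationary points (near $t=0$) inside the main term while disposing of the spurious one at $t^{\ast\ast\ast}$ (and its successors at $\asymp k/L$, which lie harmlessly in the region $t\ge c_0/L$).
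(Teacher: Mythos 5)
Your overall strategy is the same as the paper's: evaluate \eqref{gacint} by a saddle-point analysis in which the confluent (Airy-type) saddle at scale $\sigma=\sqrt{2\alpha}/L$ produces the main term $\sigma K(\nu)$, identify the rescaled phase $-\tfrac{3\nu}{2}u+\tfrac{\nu}{2}u^3$ exactly as the paper does via the coefficients $c_1,c_3$, and dispose of $|t|\ge$ a multiple of $1/L$ by showing $\re M$ is very negative there. Your central-window computation and the ranges $|t|\ge c_0/L$ are fine in outline. The difference is in the intermediate range: the paper takes a much wider central window ($T_1=\alpha^{2/5}/L^2\gg\sigma D$), uses the second mean value theorem on $[T_1,T]$ (legitimate because $e^{c_2t^2+c_4t^4}$ is monotone past its maximum) together with a first-derivative bound on $\int\cos(c_1t+c_3t^3)\,dt$, and beyond $T=\alpha^{1/3}/L^2$ bounds the integrand in absolute value using a refined estimate of $Q_1$ that retains a negative quartic term, so that $\re M\le -a^{-1/10}$ there; you instead propose a single integration by parts over $[\sigma D,\,c_0/L]$.

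That step, as justified, does not close. After integrating by parts against the phase $\psi=\im M$, the remainder contains $\int e^{\re M}\,|(\re M)'|/\psi'\,dt$, and your stated control ($\psi'\gg 1/\sigma$, $|M''(t)|\ll rL|t|$) only handles the $\psi''/\psi'^2$ piece. Since $(\re M)'\approx 2c_2t+4c_4t^3$ and $\psi'\asymp c_3t^2$ on this range, the dangerous term is $\asymp e^{\re M}Lt$; using only $e^{\re M}\ll 1$ (which is all you have asserted, since $\re M$ stays $o(1)$ on an interval of length $\asymp(a/L^3)^{1/4}\gg\sigma$) its integral over $[\sigma D,c_0/L]$ is $\asymp 1/L$, which exceeds $\sigma=\sqrt{2\alpha}/L$ by a factor $\alpha^{-1/2}\to\infty$. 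To rescue the argument you must prove and use the quartic decay $\re M(t)\le -c\,|c_4|\,t^4\asymp -cL^3t^4/a$ for $C\sigma\le t\le c_0/L$ — exactly the ingredient the paper supplies through its bound $Q_1\le \tfrac{t^2}{2}(are^{ar}-e^{ar}+1)-\tfrac{t^4}{2\kappa}(ar)^3e^{ar}$ (or, equivalently, its use of the second mean value theorem, which never differentiates the amplitude). Two smaller inaccuracies: the local maximum of $\re M$ sits at $t^*\asymp\sqrt{2\alpha}/L=\sigma$ (so inside your central window), of height $\asymp\alpha^2/(aL)$, not at $\asymp\sqrt{\alpha/L}$ of height $\asymp\alpha^2/a$; and the monotonicity of the true $\re M$ past $t^*$, which your step (iii) leans on, is asserted rather than proved — the paper avoids it by bounding $Q_1,Q_2$ directly on $|t|\le 7/L$ and crudely on $|t|\ge 7/L$.
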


\begin{corollary}\label{corlambdaub}
Let $\nu_i$,  $i\ge 0$, denote the positive zeros of $K(\nu)$ in increasing order. The first four zeros (see Figure \ref{figure2}) are
$$
\nu_0 = 2.383446..., \quad \nu_1 = 5.510195..., \quad \nu_2 = 8.647357..., \quad \nu_3 = 11.786842... 
$$
The proposition implies that for each fixed integer $k\ge 0$, $g_a(s)$ has a zero at
$$
s_k = -\frac{1}{a} \log \frac{1+ \frac{1}{2}(3 (\nu_k+o(1)) a L)^{2/3}}{e a} 
=  -\frac{1}{a}\left( L-1 + \frac{1+o(1)}{2}(3 \nu_k a L)^{2/3}\right)
$$
as $a\to 0$. In particular, this shows that
$$
\lambda_{a} \le  -s_0 = \frac{1}{a}\left( L-1 + \frac{1+o(1)}{2}(3 \nu_0 a L)^{2/3}\right) 
= 
\frac{1}{a} \left(\log\left(\frac{1}{a}\right)-1+o(1)\right),
$$
as $a\to 0$.
\end{corollary}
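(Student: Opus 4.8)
The plan is to extract, from Proposition \ref{proplambdaub}, the sign of $g_a(-n)$ at the distinguished integers $n$ for $\nu$ near each $\nu_k$, and then to locate real zeros of $g_a$ by the intermediate value theorem and check that they lie where the Corollary claims.

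\textbf{What is needed about $K$.} By \eqref{KAiry}, for $\nu>0$ the sign of $K(\nu)$ equals that of $\mathrm{Ai}\bigl(-(3\nu/2)^{2/3}\bigr)$. The real zeros of $\mathrm{Ai}$ lie on the negative axis and are simple, and $-(3\nu_k/2)^{2/3}$ is exactly the $(k+1)$-st largest of them; hence $K$ changes sign at each $\nu_k$, and $K>0$ on $(0,\nu_0)$ since $\mathrm{Ai}$ is positive to the right of its largest real zero. Fix $k\ge 0$ and a constant $\varepsilon>0$ so small that $[\nu_k-\varepsilon,\nu_k+\varepsilon]$ contains no other zero of $K$; then $K(\nu_k-\varepsilon)$ and $K(\nu_k+\varepsilon)$ are nonzero and of opposite sign.

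\textbf{Producing a real zero of $g_a$.} Apply Proposition \ref{proplambdaub} with $\nu=\nu_k-\varepsilon$ and with $\nu=\nu_k+\varepsilon$, giving integers $n^\mp$, radii $r^\mp$, and $\alpha^\mp=\tfrac12\bigl(3(\nu_k\mp\varepsilon)aL\bigr)^{2/3}$ with
\[
g_a(-n^\mp)\,a\,e^\gamma=P^\mp(a)\bigl(K(\nu_k\mp\varepsilon)+o(1)\bigr),\qquad P^\mp(a):=\frac{n^\mp!}{2\pi}\,\frac{e^{\,r^\mp-I(ar^\mp)}}{(r^\mp)^{\,n^\mp}}\,\frac{\sqrt{2\alpha^\mp}}{L}>0,
\]
as $a\to0$. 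Hence $g_a(-n^-)$ and $g_a(-n^+)$ are nonzero of opposite sign for all sufficiently small $a$. Since $\alpha$ increases with $\nu$ we have $n^-<n^+$ once $a$ is small (indeed $n^+-n^-=\frac1a\log\frac{1+\alpha^+}{1+\alpha^-}+O(1)\asymp a^{-1/3}L^{2/3}\to\infty$), and $g_a$ is entire and real-valued on $\mathbb{R}$ by \eqref{eqgaseval}. The intermediate value theorem therefore gives a real zero $\sigma=\sigma(a,k,\varepsilon)$ of $g_a$ with $-n^+<\sigma<-n^-$.

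\textbf{Locating the zero and concluding.} Because $aL=a\log(1/a)\to0$, we have $\alpha^\mp\to0$ and $\log(1+\alpha^\mp)=\alpha^\mp(1+o(1))$, so
\[
n^\mp=\frac1a\log\frac{1+\alpha^\mp}{ea}+O(1)=\frac1a\Bigl(\log\tfrac1a-1+\tfrac{1+o(1)}{2}\bigl(3(\nu_k\mp\varepsilon)aL\bigr)^{2/3}\Bigr);
\]
the correction term is $\asymp a^{-1/3}L^{2/3}=o(1/a)$, so $n^\mp=\frac1a(\log(1/a)-1+o(1))$ for each fixed $\varepsilon$. Consequently $-\sigma\in(n^-,n^+)$ equals $\frac1a\log\frac{1+\tfrac12(3\nu'aL)^{2/3}}{ea}+O(1)$ for some $\nu'\in(\nu_k-\varepsilon,\nu_k+\varepsilon)$ (apply the intermediate value theorem to the continuous increasing map $\nu\mapsto\frac1a\log\frac{1+\alpha(\nu)}{ea}$, whose variation over this interval tends to $\infty$); letting $\varepsilon\to0$ slowly as $a\to0$ — a routine diagonal argument, legitimate since Proposition \ref{proplambdaub} holds for each fixed $\nu$ — yields a real zero of $g_a$ at $s_k=-\frac1a\log\frac{1+\frac12(3(\nu_k+o(1))aL)^{2/3}}{ea}=-\frac1a\bigl(L-1+\tfrac{1+o(1)}{2}(3\nu_k aL)^{2/3}\bigr)$. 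Finally, taking $k=0$: since $-\lambda_a$ is the maximal real part among the zeros of $g_a$ (attained at a real zero because $\rho_a\ge0$, by Lemma \ref{FaInv}) and $\sigma$ is such a zero, $-\lambda_a\ge\sigma>-n^+=-\frac1a(\log(1/a)-1+o(1))$, that is, $\lambda_a\le\frac1a\bigl(\log\tfrac1a-1+o(1)\bigr)$, as asserted.

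\textbf{Main obstacle.} Nearly all the analytic effort is already spent in Proposition \ref{proplambdaub} (the complex saddle-point analysis producing the Airy-type integral $K$); the deduction above is short. The one point requiring care is that at $\nu=\nu_k$ the formula of Proposition \ref{proplambdaub} only shows $g_a(-n)=o(\text{main term})$, so one must perturb $\nu$ off $\nu_k$ and genuinely know that $K$ changes sign there — this is where the Airy representation \eqref{KAiry} and the simplicity of the Airy zeros are used. The remainder is the elementary bookkeeping that the correction $a^{-1/3}L^{2/3}$ in $n^\mp$ is $o(1/a)$, plus the standard diagonalization replacing the fixed $\varepsilon$ by $o(1)$.
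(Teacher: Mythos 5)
Your proposal is correct and follows essentially the paper's (implicit) route: the corollary is deduced from Proposition \ref{proplambdaub} by exactly this argument — evaluate $g_a(-n)$ at parameters $\nu_k\pm\varepsilon$ where $K$ has opposite signs, apply the intermediate value theorem to the real, entire function $g_a$ to get a real zero between the two integers, and diagonalize $\varepsilon\to 0$, with $\lambda_a\le -\sigma$ following since $-\lambda_a$ is the maximal real zero. The only cosmetic difference is that you justify the sign change of $K$ at each $\nu_k$ via the Airy representation \eqref{KAiry} and simplicity of the Airy zeros, whereas the paper treats this property of $K(\nu)$ as read off from its numerical evaluation (Figure \ref{figure2}).
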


From \eqref{KAiry} it follows that $\nu_k = \pi \left(k + \frac{3}{4}\right) +o(1)$, as  $k\to \infty$, but only the first zero at $\nu_0$ is needed for the proof of Theorem \ref{thmlambda}.

\begin{proof}[Proof of Proposition \ref{proplambdaub}]
We approximate the exponent of the integrand in \eqref{gacint} by its fourth degree Taylor polynomial around $t=0$.
Here the linear term does not vanish.
We have
\begin{multline*}
M_{a,r,n}(t) = i  \left(r+1-n-e^{ar}\right)t
+\frac{r}{2}  \left(a e^{a r}-1\right) t^2
+ \frac{i r}{6} \left(a  e^{a r} (a r+1)-1\right) t^3 \\
+ \frac{r}{24}\left(1-a  e^{a r} \left(a^2 r^2+3 a r+1\right)\right) t^4 
+O(e^{ar} (ar)^4 |t|^5) \\
=: ic_1 t+ c_{2} t^2+ i  c_{3} t^3+  c_{4} t^4 +O( a^{-1} L^4 |t|^5),
\end{multline*}
since $\frac{d^5}{dt^5} M_{a,r,n}(t) \ll e^{ar} (ar)^4 \ll a^{-1} L^4$ for all $t\in \mathbb{R}$.
The coefficients are
$$
c_1 = r+1-n-e^{ar}= -\frac{\alpha}{a} +O(1),
$$
$$
c_2= \frac{r}{2}  \left(a e^{a r}-1\right)
=\frac{\alpha L}{6a}\left(1+O\left(\frac{1}{L}\right)\right),
$$
$$
c_3= \frac{r}{6} \left(a  e^{a r} (a r+1)-1\right) = \frac{L^2}{6a} ( 1+ O(\alpha)),
$$
and
$$
c_4 =  \frac{r}{24}\left(1-a  e^{a r} \left(a^2 r^2+3 a r+1\right)\right) = -\frac{L^3}{24 a} \left(1+O\left(\frac{1}{L}\right)\right).
$$

Define
$$
T=\frac{\alpha^{1/3}}{L^2}, \qquad T_1 = \frac{\alpha^{2/5}}{L^2}.
$$
For $|t|\le T$, we have $a^{-1}  L^4 |t|^5 \ll 1$. Thus,
\begin{equation*}
\begin{split}
 I_1 & := \int_{-T}^T \exp\{ M_{a,r,n}(t) \} dt   \\
& =\int_{-T}^T \exp\{c_2 t^2 + c_4 t^4\} \exp\{i(c_1 t + c_3 t^3)\} (1+O(L^4 a^{-1} T^5)) dt \\
& = O(L^4  a^{-1}  T^6)+ \int_{-T}^T \exp\{c_2 t^2 + c_4 t^4\}\exp\{i(c_1 t + c_3 t^3)\} dt, \\
& = O(\sqrt{\alpha}/L^7)+ \int_{-T}^T \exp\{c_2 t^2 + c_4 t^4\} \cos(c_1 t + c_3 t^3) dt, \\
\end{split}
\end{equation*}
since $ \exp\{c_2 t^2 + c_4 t^4\} \ll 1$ for all $t\in \mathbb{R}$.
Indeed, $c_2 t^2 + c_4 t^4$ has its global maximum at $t=\pm T_0$, say, where $T_0 \sim  \sqrt{2\alpha}/L$ and 
$c_2 T_0^2 +c_4 T_0^4 \ll 1$. 
Since $ \exp\{c_2 t^2 + c_4 t^4\}$ is decreasing on $[T_0, \infty]$ and $T_1>T_0$, the second mean value theorem for integrals 
implies that there exists a $T_2 \in [T_1,T]$ such that
$$
\int_{T_1}^T \exp\{c_2 t^2 + c_4 t^4\} \cos(c_1 t + c_3 t^3) dt
= \exp\{c_2 T_1^2 + c_4 T_1^4\} \int_{T_1}^{T_2} \cos(c_1 t + c_3 t^3) dt.
$$
A little exercise shows that the last integral is $\ll \frac{1}{c_3 T_1^2}$, so that 
$$
\int_{T_1}^T \exp\{c_2 t^2 + c_4 t^4\} \cos(c_1 t + c_3 t^3) dt \ll \frac{ 1}{c_3 T_1^2}\asymp \alpha^{7/10}L .
$$
When $|t| \le T_1$, we have
$$
|c_2 t^2 + c_4 t^4| \le |c_2 T_1^2 + c_4 T_1^4|  \asymp |c_4 T_1^4| \ll 1,
$$
which yields
$$
\exp\{c_2 t^2 + c_4 t^4\} = 1+ O( |c_4| T_1^4) 
$$
and
$$
 \int_{-T_1}^{T_1} \exp\{c_2 t^2 + c_4 t^4\} \cos(c_1 t + c_3 t^3) dt = O(|c_4| T_1^5)+ \int_{-T_1}^{T_1} \cos(c_1 t + c_3 t^3) dt .
$$
Note that $|c_4| T_1^5 \asymp \sqrt{\alpha} L^{-6}$. 
After the change of variable $u=t L/\sqrt{2 \alpha}$ and $U=T_1 L/\sqrt{2 \alpha} \asymp \alpha^{-1/10} /L$, 
 we have
\begin{equation*}
\begin{split}
&\frac{L}{\sqrt{2\alpha}}\int_{-T_1}^{T_1} \cos(c_1 t + c_3 t^3) dt\\
& =  \int_{-U}^U \left[\cos\left( -\frac{3\nu}{2}u +\frac{\nu}{2}u^3 \right) +O(U\sqrt{\alpha}/L+U^3 \alpha)\right] du\\
 & = O(  U^2 \sqrt{\alpha}/L + U^4 \alpha)+  \int_{-U}^U \cos\left( -\frac{3\nu}{2}u +\frac{\nu}{2}u^3 \right) du\\
 & = O(U^2 \sqrt{\alpha}/L  + U^4 \alpha +U^{-2})+  \int_{-\infty}^\infty \cos\left( -\frac{3\nu}{2}u +\frac{\nu}{2}u^3 \right) du\\
 &  = o(1)+  K(\nu).
\end{split}
\end{equation*}
Combining everything, we obtain
$$
  \int_{-T}^T \exp\{ M_{a,r,n}(t) \} dt = \frac{\sqrt{2\alpha}}{L} ( K(\nu) + o(1)),
$$
as $a \to 0$. 

It remains to show that 
$$
 \int_{T}^{\pi} \exp\{ M_{a,r,n}(t) \} dt = o\left(\frac{\sqrt{2\alpha}}{L} \right).
$$
Let $t_0 = 7/L$ and write
$$
 \int_{T}^{\pi} \exp\{ M_{a,r,n}(t) \} dt = \int_{T}^{t_0} + \int_{t_0}^{\pi} =: I_2 +I_3.
$$
For $I_2$ and $I_3$, we estimate the absolute value of the integrand. As in the case of a real saddle point,  we have
$$
\re(M_{a,r,n}(t)) = -r(1-\cos t ) + Q_1+Q_2,
$$
where
\begin{equation*}
Q_1+Q_2 =  \int_0^1 e^{arv}(1-\cos(arv \sin t) ) \frac{dv}{v} +  \int_0^1 ((e^{arv}-e^{arv \cos t })\cos(arv \sin t) ) \frac{dv}{v}.
\end{equation*}
We note that 
$$\frac{t^2}{5} \le 1-\cos t \le  \frac{t^2}{2}\qquad (|t| \le \pi).
$$ 
To estimate $I_3$, we write
\begin{multline*}
Q_1 \le \frac{1}{2}\int_0^{1/(ar t)} e^{arv}( a r  t)^2 v dv +  \int_{1/(ar t)}^{1/2} e^{arv} 2 \frac{dv}{v} + \int_{1/2}^1 e^{arv} 2 \frac{dv}{v}\\
\le \frac{t}{2}e^{1/t} + 2 (ar t) \frac{e^{ar/2}}{ar} + 4 \frac{e^{ar}}{ar}  \le  \frac{5r}{L^2},
\end{multline*}
for $a$ sufficiently small and $|t|\ge t_0$.
Since $1-e^{-x} \le x$ for all $x$, 
\begin{multline*}
Q_2 \le \int_0^{1} e^{arv}( 1-e^{-arv t^2/2}) \frac{dv}{v}\le \int_0^{1/2} e^{arv} ar (t^2/2) dv +  \int_{1/2}^1  e^{arv} 2 dv \\
\le e^{ar/2}\pi^2/2 + \frac{2 e^{ar}}{ar} \le \frac{3r}{L^2}.
\end{multline*}
We have $r(1-\cos t) \ge r t^2/5 \ge r t_0^2/5$. Combining these three estimates we obtain
$$
|I_3| \le 2\pi \exp( - rt_0^2/5 +8r/L^2) \le 2\pi \exp(-r/L^2)= o\left(\frac{\sqrt{2\alpha}}{L} \right).
$$

Next, we estimate $I_2$. For all real $t$ we have
$$
 r(1-\cos t) \ge r \left(\frac{ t^2}{2}-\frac{t^4}{24}\right).
$$

When $|t| \le t_0 = 7/L$, then $|\sin t| \le 7/L$ and 
$$
1-\cos(arv \sin t) \le \frac{(arv t)^2}{2}- \frac{(arvt)^4}{\kappa},
$$
for some suitable constant $\kappa$. Thus 
$$
Q_1 \le \int_0^1 e^{arv} \left(\frac{(arv  t)^2}{2} -\frac{(arvt)^4}{\kappa}\right) \frac{dv}{v}
\le  \frac{t^2}{2} \left( ar e^{ar} - e^{ar}+1\right)- \frac{t^4}{2\kappa} (ar)^3 e^{ar}
$$
and
$$
Q_2 \le \int_0^{1} e^{arv}( 1-e^{-arv t^2/2})  \frac{dv}{v} \le  \int_0^{1} e^{arv} \frac{ar t^2}{2} dv = \frac{t^2}{2} (e^{ar}-1).
$$
For sufficiently small values of $a$ and  $t_0\ge |t| \ge T = \frac{\alpha^{1/3}}{L^2}$, we obtain
\begin{equation*}
\begin{split}
  -r(1-\cos t) + Q_1+Q_2 
 \le & -r\left(\frac{ t^2}{2}-\frac{t^4}{24}\right)+ \frac{t^2}{2} ar e^{ar}  - \frac{t^4}{2\kappa} (ar)^3 e^{ar}\\
\le & \frac{t^2}{2} r \left(-1 + a e^{ar} \right) - \frac{t^4}{3\kappa} (ar)^3 e^{ar}\\
 \le & \frac{t^2}{2} r \alpha - \frac{t^4}{4\kappa} r L^2 
\le  - \frac{t^4}{5\kappa} r   L^2 
\le  -\frac{T^4}{5\kappa} r   L^2 \\
= &  -\frac{\alpha^{4/3}}{5\kappa L^6} r \le - a^{-1/10}.
\end{split}
\end{equation*}
It follows that 
$
|I_2| \le 2 \pi \exp(- a^{-1/10}) \ll a  = o\left(\frac{\sqrt{2\alpha}}{L} \right).
$
Combining everything, we have 
$$
  \int_{-\pi}^\pi \exp\{ M_{a,r,n}(t) \} dt = \frac{\sqrt{2\alpha}}{L} ( K(\nu) + o(1)),
$$
as $a \to 0$. 
The result now follows from \eqref{gacint}.
\end{proof}


\begin{thebibliography}{99}
\bibitem{CherGold}
A. Y. Cheer and D. A. Goldston, A differential delay equation arising from the sieve of Eratosthenes, \textit{Math. Comp.} \textbf{55} (1990), 129--141.

\bibitem{Opera}
J. Friedlander and H. Iwaniec, 
Opera de cribro,
Amer. Math. Soc. Colloq. Publ., 57
American Mathematical Society, Providence, RI, 2010. 

\bibitem{Gran}
A. Granville, Smooth numbers: computational number theory and beyond, \textit{Algorithmic Number Theory}, MSRI Publications, Volume 44, 2008.

\bibitem{HilTen}
A. Hildebrand and G. Tenenbaum,
Integers without large prime factors,
\textit{J. Théor. Nombres Bordeaux} \textbf{5} (1993), no. 2, 411--484.

\bibitem{NIST}
NIST Digital Library of Mathematical Functions, \url{https://dlmf.nist.gov/}

\bibitem{Poly}
D. H. J. Polymath, 
New equidistribution estimates of Zhang type, \textit{Algebra Number Theory} \textbf{8} (2014), no.9, 2067--2199.


\bibitem{Pom}
C. Pomerance,
The role of smooth numbers in number-theoretic algorithms, \textit{Proceedings of the International Congress of Mathematicians}, Vol. 1, 2 (Zürich, 1994), 
41--422.
Birkhäuser Verlag, Basel, 1995.

\bibitem{PTW}
C. Pomerance, L. Thompson, A. Weingartner,
On integers $n$ for which $X^n-1$ has a divisor of every degree.
\textit{Acta Arith.} \textbf{175} (2016), no. 3, 225--243.

\bibitem{Saias1}
E. Saias, Entiers \`{a} diviseurs denses 1, \textit{J. Number
Theory} \textbf{62} (1997), 163--191.

\bibitem{Sar}
G. P. Sarajian, 
On Some Problems in Multiplicative and Additive Number Theory,
Thesis (Ph.D.)–University of California, Santa Barbara. 2021. 107 pp.
\url{https://escholarship.org/uc/item/2d02j8tn}

\bibitem{Ten86}
G. Tenenbaum, Sur un probl\`{e}me de crible et ses applications,
\textit{Ann. Sci. \'{E}cole Norm. Sup. (4)} \textbf{19} (1986),
1--30.

\bibitem{Ten}
G. Tenenbaum, Introduction to Analytic and Probabilistic Number Theory, Third Edition, Graduate Studies in Mathematics, Vol. 163, American Mathematical Society, 2015.

\bibitem{IDD3}
A. Weingartner, Integers with dense divisors 3, \textit{J. Number Theory} \textbf{142} (2014), 211--222.

\bibitem{PDD}
A. Weingartner, Practical numbers and the distribution of divisors, 
\textit{Q. J. Math} \textbf{66} (2015), 743--758.

\bibitem{SPA}
A. Weingartner, A sieve problem and its application, 
\textit{Mathematika} \textbf{63} (2017), no. 1, 213--229. 

\bibitem{OMG}
A. Weingartner, The number of prime factors of integers with dense divisors, \textit{J. Number Theory} \textbf{239} (2022), 57--77.

\bibitem{SSF}
A. Weingartner, The Schinzel-Szekeres function, \textit{Res. Number Theory} \textbf{11}, (2025), no. 3, Paper No. 63, 32 pp.

\bibitem{Zhang}
Y. Zhang, 
Bounded gaps between primes,
\textit{Ann. of Math.} (2) \textbf{179} (2014), no. 3, 1121--1174.

\end{thebibliography}
\end{document}